\pgfplotsset{compat=1.18}
\newtheorem{theorem}{Theorem}[section]
\newtheorem{proposition}[theorem]{Proposition}
\newtheorem{lemma}[theorem]{Lemma}
\theoremstyle{definition}
\newtheorem{definition}[theorem]{Definition}
\theoremstyle{remark}
\newtheorem*{remark}{Remark}
\numberwithin{equation}{section}
\newcommand{\divop}{\operatorname{div}}
\newcommand{\curl}{\operatorname{curl}}
\newcommand{\peta}{\partial^\eta}
\newcommand{\symeta}{\mathcal{S}^\eta}
\renewcommand\leq{\leqslant}
\renewcommand\geq{\geqslant}
\renewcommand\le{\leqslant}
\renewcommand\ge{\geqslant}
\newcommand{\sym}{\mathcal{S}}
\begin{document}
		
		\title[Finite-Time Splash in Neo-Hookean Elastodynamics]{Finite-Time Splash in Free Boundary Problem of 3D Neo-Hookean Elastodynamics}
			\author{Wei Zhang}
\author{Chengchun Hao}	
\author{Jie Fu}

		\address{Wei Zhang\newline School of Mathematics and Statistics,Hefei University, Hefei, Anhui, 230000, China}
		\email{zhangwei16@mails.ucas.edu.cn}

\address{Chengchun Hao\newline	State Key Laboratory of Mathematical Sciences, Academy of Mathematics and Systems Science, Chinese Academy of Sciences, Beijing 100190, China\newline
	\and
	School of Mathematical Sciences,
	University of Chinese Academy of Sciences,
	Beijing 100049, China}
\email{hcc@amss.ac.cn}
\address{Jie Fu\newline School of Mathematical Sciences, Xiamen University, xiamen, Fujian 361005, China}
\email{fujie@amss.ac.cn}			
		
\begin{abstract}
This paper establishes finite-time splash singularity formation for 3D viscous incompressible neo-Hookean elastodynamics with free boundaries. The system features mixed stress-kinematic conditions where viscous-elastic stresses balance pressure forces at the evolving interface --- a configuration generating complex boundary integrals that distinguish it from Navier-Stokes or MHD systems. To address this challenge, we employ a Lagrangian framework inspired by Coutand and Shkoller (2019), developing specialized coordinate charts and constructing a sequence of shrinking initial domains with cylindrical necks connecting hemispherical regions to bases. Divergence-free initial velocity and deformation tensor fields are designed to satisfy exact mechanical compatibility. Uniform a priori estimates across the domain sequence demonstrate that interface evolution preserves local smoothness while developing finite-time self-intersection. Energy conservation provides foundational stability, while higher-order energy functionals yield scaling-invariant regularity control. The analysis proves inevitable splash singularity formation within explicitly bounded time, maintaining spatial smoothness near the singular point up to the intersection time.
\end{abstract}

\keywords{Free boundary problem; Incompressible elastodynamics; Splash singularity.}
					
\subjclass[2020]{35R35, 76D03, 76A10}

\maketitle

\section{Introduction}

We consider the free boundary problem for incompressible viscoelastic neo-Hookean materials:
\begin{equation}\label{1.1}
	\begin{cases}
		u_{t}+u\cdot\nabla u-\Delta u+\nabla p=\divop (FF^{\top}) \qquad  &
		\text{in}\ \Omega(t),\\
		F_{t}+u\cdot\nabla F=\nabla u F   \qquad  &\text{in} \ \Omega(t), \\
		\divop  u=0   \qquad &\text{in} \ \Omega(t),\\
		\divop  F^\top=0  \qquad &  \text{in}\ \Omega(t).
	\end{cases}
\end{equation}
The motion of the material occupying a time-evolving domain $\Omega(t)\subset\mathbb{R}^{3}$ over $t\in[0,T]$ is governed by the velocity field $u=(u_{1},u_{2},u_{3})$ and pressure $p$, with the incompressibility constraint \eqref{1.1}. The deformation tensor $F=(F_{ij})$ and its transpose $F^{\top}=(F_{ji})$ are fundamental to the constitutive description, where the Cauchy-Green tensor $FF^{\top}$ characterizes the neo-Hookean material response to isochoric deformations. Normalization with unit physical constants preserves essential nonlinear deformation-momentum couplings while simplifying the equations.

The doubled symmetric deformation tensor $\mathcal{S}(u)$, defined component-wise as
\[
(\mathcal{S}(u))_{ij}=\partial_i u_j+\partial_j u_i,
\]
quantifies strain-rate dissipation in viscoelastic continua. The boundary $\partial\Omega(t)$ evolves with normal velocity $V(\partial\Omega(t)) = u \cdot N$, where $N(t,x)$ is the outward unit normal, subject to the following conditions (cf. \cite{DMS}):
\begin{equation}\label{1.2}
	\begin{cases}
		\left((\sym(u)-p\mathrm{I})+(FF^{\top}-\mathrm{I})\right)N=0 \quad \text{on }  \partial\Omega(t),  \\
		V(\partial\Omega(t))=u\cdot N,  \\
		\Omega(t)|_{t=0}=\Omega_{0}, \quad (u,F)|_{t=0}=(u_{0},F_{0}),
	\end{cases}
\end{equation}
where $\mathrm{I}$ denotes the $3 \times 3$ identity matrix. The stress equilibrium balances viscous-elastic stresses against pressure forces at the free boundary, while the kinematic condition ensures geometric consistency. The initial data  $(\Omega_{0}, u_{0}, F_{0})$ satisfy the compatibility conditions $\det F_0=1$, $\divop  u_0=0$  and $\divop F_0^{\top} = 0$ to maintain finite-strain kinematics.  The coupled operator $(\mathcal{S}(u)-p\mathrm{I})+(FF^{\top}-\mathrm{I})$ embodies the neo-Hookean constitutive law.

The splash singularity phenomenon has been studied in several key contexts.  Castro, C\'ordoba, Fefferman, Gancedo, and G\'omez first established its existence for the 2D Navier-Stokes equations \cite{CCFGG2}. Coutand and Shkoller later extended this result to special domains in both 2D and 3D \cite{CS5}.

For elastodynamics, the tangentiality condition $F^{\top} \cdot N = 0$ at free boundaries emerges naturally from  the constraint $\operatorname{div} F^{\top} = 0$. Hao and Wang derived \textit{a priori} estimates for this classical boundary configuration ($p=0$ and $F^{\top} \cdot N = 0$) in the incompressible case \cite{HW}. Subsequent developments established local well-posedness for both incompressible \cite{ZY} and compressible \cite{ZJ} elastodynamics, extended to mixed-type stability conditions \cite{GW}, and yielded blow-up criteria for incompressible elastodynamics \cite{FHYZ}.

In this work, system \eqref{1.1} is complemented by the boundary conditions \eqref{1.2}, representing static force equilibrium at the interface. Significant prior results include: Gu and Lei's proof of local well-posedness for incompressible elastodynamics with surface tension ($\sigma > 0$) \cite{GL}, and Di Iorio, Marcati, and Spirito's analysis of the non-tension case ($\sigma = 0$) \cite{DMS}, both studying the boundary condition:
\begin{align*}
	\left[-p\mathrm{I} + (FF^\top - \mathrm{I}) +\nu(\nabla v+\nabla v^{\top})\right]N = \sigma H N.
\end{align*}

A crucial mathematical connection emerges when $F$ degenerates to a rank-$1$ matrix with single non-zero column $H \in \mathbb{R}^3$. Here, $FF^{\top}$ reduces to the rank-$1$ operator $H \otimes H$, causing \eqref{1.1}-\eqref{1.2} to formally coincide with incompressible magnetohydrodynamics (MHD) where $H$ represents the magnetic field. This correspondence is significant because splash singularities are well-established for such MHD systems --- in 2D \cite{HY} and recently in 3D \cite{HLZ}. However, unlike Navier-Stokes in \cite{CS5} or MHD in \cite{HLZ}, the mixed condition \eqref{1.2} for elastodynamics generates intricate boundary integrals and restricts choices of initial data, which demands sharper tools to handle deformation tensor evolution and coupled stress conditions.
Let $D_t:=\partial_t+u\cdot\nabla$ denote the material derivative. Define the spatial gradient operator $\nabla = (\partial_{1} , \partial_{2}, \partial_{3})$ such that for scalar fields $f$ and vector fields $g = (g_1, g_2, g_3)$:
\begin{align*}
	\nabla f := (\partial_1 f, \partial_2 f, \partial_3 f), \quad
	\nabla^{\top} g := (\nabla^{\top} g_1, \nabla^{\top} g_2, \nabla^{\top} g_3).
\end{align*}
We employ the Einstein-type summation convention with Latin indices ($i,j,k,\ldots$) ranging over $\{1,2,3\}$ for bulk quantities, and Greek indices ($\alpha,\beta,\gamma,\ldots$) over $\{1,2\}$ for surface operations. This indexing facilitates unified treatment of bulk-surface coupling.

Key differential operators are defined as:
\begin{align*}
	(\nabla u F)_{ij} &:= (\nabla u)_{ik} F_{kj}, \\
	\divop u &:= \partial_i u_i, \\
	(\divop F^\top)_i &:= \partial_j F_{ji}, \\
	(\divop (FF^\top))_i &:= \partial_j (F_{ik} F_{jk}).
\end{align*}
The system \eqref{1.1}-\eqref{1.2} can now be expressed as:
\begin{equation}\label{1.3}
	\begin{cases}
		D_{t}u_{i}-\partial_k\partial_k u_i+\partial_{i}p=\partial_{j}F_{ik}F_{jk} \qquad &\text{in}\  \Omega(t), \\
		D_{t}F_{ij}=\partial_{k}u_{i}F_{kj} \qquad &\text{in}\  \Omega(t), \\
		\partial_{k}u_{k}=0, \quad \partial_{j}F_{ji}=0   \qquad & \text{in}\  \Omega(t), \\ ((\partial_j u_i+\partial_i u_j)-p\delta_{ij}+F_{ik}F_{jk}-\delta_{ij})N_{j}=0 \qquad &  \text{on} \ \partial\Omega(t), \\
		V(\partial\Omega(t))=u_{i}N_{i},
	\end{cases}
\end{equation}
where $\delta_{ij}$ is the Kronecker delta.

Now, we recall the precise definition of a ‌splash singularity in the context of fluid interface dynamics (cf. \cite{CS3}):

\begin{definition}[Splash Singularity]
	For a time-evolving fluid interface $\partial\Omega(t)$ that remains locally smooth in space and time, a ‌splash singularity occurs at time $T<\infty$ if $\partial\Omega(T)$ develops a self-intersection at a point while retaining local smoothness near that point.	
\end{definition}

The central objective of this work is to demonstrate that smooth initial data exist for system \eqref{1.1} which leads to finite-time splash singularity formation. Our main result is:

\begin{theorem}[Finite-Time Splash Formation] \label{thm1.1}
	There exists an initial data set $(\Omega_{0}, u_{0}, F_{0})$ for the incompressible neo-Hookean elastodynamics system \eqref{1.1}-\eqref{1.2} satisfying:
	\begin{enumerate}[(1)]
		\item $\Omega_{0} \subset \mathbb{R}^{3}$ is a bounded $C^\infty$-domain.
		\item The initial velocity $u_0$ and deformation tensor $F_0$ are smooth and satisfy $\divop u_0 = 0$ and  $\divop F_0^\top = 0$ with compatibility conditions: $\left[\sym(u_0) N_0\right] \times N_0 = 0$, $			(F_0^{\top}-I)\cdot N_0 = 0$ on $\partial \Omega_{0}$, where $N_0$ denotes the outward unit normal.
	\end{enumerate}
	Then for the corresponding solution $(\Omega(t), u, F)$, there exists $T^{*} > 0$ such that
	$\partial\Omega(T^*)$  undergoes self-intersection at  $x_0 \in \mathbb{R}^3$  (splash point), while maintaining  $C^\infty$-regularity locally near  $x_0$.
	
\end{theorem}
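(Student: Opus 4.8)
The plan is to follow the reverse-engineering scheme that Coutand and Shkoller developed for viscous free-boundary flows, adapted to the coupled system \eqref{1.1}--\eqref{1.2}. Rather than prescribing data and waiting for a breakdown, one starts from the target geometry. Fix a smooth reference domain $\widetilde\Om$ consisting of a hemispherical cap joined to a flat base through a thin cylindrical neck, and build a one-parameter family of ``splash maps'' $\Psi^{\ep}\colon\widetilde\Om\to\R^{3}$, each a $C^{\infty}$ diffeomorphism onto its image $\Om_{0}^{\ep}:=\Psi^{\ep}(\widetilde\Om)$ for $\ep>0$, arranged so that as $\ep\to0$ the image degenerates to a self-touching domain: two opposite faces of the neck are carried onto a single point $x_{0}\in\R^{3}$, while $\Psi^{\ep}$ remains injective and smooth for $\ep>0$. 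On each $\Om_{0}^{\ep}$ one then designs divergence-free data: a velocity $u_{0}^{\ep}$ with $[\sym(u_{0}^{\ep})N_{0}^{\ep}]\times N_{0}^{\ep}=0$ that pushes the two neck faces toward one another at an $O(1)$ normal speed, and a deformation tensor $F_{0}^{\ep}$ with $\divop(F_{0}^{\ep})^{\top}=0$ and $(F_{0}^{\ep})^{\top}N_{0}^{\ep}=0$ that is \emph{not identically zero}, so that the system does not collapse to Navier--Stokes. Since $(F_{0})^{\top}N_{0}=0$ forces all columns of $F_{0}$ to be tangent to $\partial\Om_{0}^{\ep}$, hence $F_{0}$ to be rank-deficient there, this construction is already delicate and is where the restricted class of admissible initial data enters; the specialized coordinate charts of the paper supply the local frames near the neck in which it is carried out.

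The next step is to transport \eqref{1.1}--\eqref{1.2} to the fixed domain $\widetilde\Om$ through $\Psi^{\ep}$ and work there in Lagrangian coordinates. Writing $\eta^{\ep}$ for the flow map, $v=u\circ\eta^{\ep}$, $q=p\circ\eta^{\ep}$, one has the algebraic identity $F\circ\eta^{\ep}=\nabla\eta^{\ep}\,F_{0}$, so that $FF^{\top}\circ\eta^{\ep}=\nabla\eta^{\ep}\,(F_{0}F_{0}^{\top})\,(\nabla\eta^{\ep})^{\top}$ and the genuine unknowns reduce to $(\eta^{\ep},q)$. The momentum equation becomes a quasilinear parabolic system for $\eta^{\ep}$ with coefficients assembled from $\nabla\eta^{\ep}$, $(\nabla\eta^{\ep})^{-1}$, the fixed $F_{0}$ and $\nabla\Psi^{\ep}$; incompressibility reads $\det\nabla\eta^{\ep}\equiv1$; and the stress balance \eqref{1.3d} becomes a Neumann-type boundary condition coupling $\symeta(v)$, $q$ and the elastic Cauchy--Green term on $\partial\widetilde\Om$. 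One checks that the constraints $\divop F^{\top}=0$ and $F^{\top}\cdot N=0$ are propagated along the flow from the initial compatibility (via the Piola identity and the Nanson relation $N\propto(\nabla\eta^{\ep})^{-\top}N_{0}$, which gives $F^{\top}N\propto F_{0}^{\top}N_{0}$), so they impose no new constraint. For each fixed $\ep>0$ local well-posedness then follows from the standard route --- linearize, recover the pressure from an elliptic problem carrying the coupled Neumann data, and close a contraction/continuation argument in a high-regularity Sobolev class --- using that $\nabla\Psi^{\ep}$ is bounded with bounded inverse on $\widetilde\Om$.

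The core of the argument is a family of \emph{a priori} estimates uniform in $\ep$. First I would record the energy identity: testing the momentum equation with $u$ and using \eqref{1.1b}--\eqref{1.1c}, the boundary integrals produced by $-\Delta u+\nabla p$ and by $\divop(FF^{\top})$ cancel exactly because of the mixed condition \eqref{1.2a}, giving conservation of $\tfrac12\int(|u|^{2}+|F|^{2})$ modulo the viscous dissipation $\|\sym(u)\|_{L^{2}}^{2}$. I would then run higher-order estimates on tangential derivatives of $\eta^{\ep}$ and on material ($D_{t}$) derivatives, organized into higher-order energy functionals whose invariance under the natural rescaling --- the one that makes the degenerating family $\{\Psi^{\ep}\}$ uniform --- absorbs the singular behavior of $\nabla\Psi^{\ep}$ as $\ep\to0$ and renders the bounds, hence the existence time $T$, uniform in $\ep$; the same scaling turns the smallness of the neck width into an explicit upper bound for the splash time that tends to zero with that width. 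Finally, for $\ep$ small, a transport/ODE argument for the gap between the two approaching neck faces --- whose relative normal velocity stays bounded below on $[0,T]$ by the uniform control --- shows the gap reaches zero at some $T^{*}=T^{*}(\ep)\le T$; the a priori bounds give smoothness of $\eta^{\ep}$, and hence of $u$, $F$ and $\Om^{\ep}(t)$, uniformly on $[0,T]$, in particular on the closed interval $[0,T^{*}]$. Since $\Psi^{\ep}$ is a diffeomorphism near $x_{0}$ and the neck geometry forces the contact at $T^{*}$ to be a single tangential touching, $\partial\Om^{\ep}(T^{*})$ self-intersects at $x_{0}$ while remaining $C^{\infty}$ near it, which is exactly the assertion of the theorem.

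The main obstacle, as emphasized in the introduction, is the mixed stress--kinematic condition \eqref{1.2a}. Unlike Navier--Stokes (where the boundary carries only $\sym(u)N=pN$ up to a constant) or the rank-one MHD reduction, here the full tensor $FF^{\top}-\mathrm{I}$ sits inside the Neumann condition, so the higher-order energy identities generate boundary integrals in which $\symeta(v)$, the pressure $q$, and the elastic stress $\nabla\eta^{\ep}(F_{0}F_{0}^{\top})(\nabla\eta^{\ep})^{\top}$ are intertwined and must be estimated together --- the pressure being available only through an elliptic problem carrying that same coupled data, and the elastic boundary term not being dominated by the dissipation alone. I expect the resolution to require splitting these boundary integrals into tangential and normal parts relative to the moving, $\Psi^{\ep}$-distorted boundary, using the propagated identity $F^{\top}N=0$ to kill the most singular contributions, and exploiting $F\circ\eta^{\ep}=\nabla\eta^{\ep}F_{0}$ so that every elastic term is expressed through $\nabla\eta^{\ep}$ and the fixed $F_{0}$; this is precisely where the sharper tools and the restricted initial data are genuinely needed, and where $\ep$-uniformity is hardest to maintain. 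The degeneration of $\Psi^{\ep}$ adds a further layer, but since its singular locus stays away from the part of $\widetilde\Om$ reached by the dynamics before time $T$, it only contributes bounded lower-order perturbations once the scaling-invariant functionals are in place.
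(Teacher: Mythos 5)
Your proposal shares the paper's skeleton: a shrinking family of near-splash domains $\Om^{\ep}$ with a hemisphere suspended a distance $\ep$ above the base, a Lagrangian reformulation, the conserved physical energy, $\ep$-uniform higher-order a priori estimates, and a quantitative trajectory argument (via the continuity of second tangential derivatives) forcing the south pole below the base plane within time $O(\ep)$. On the geometry, the data construction and the closing contradiction, your plan matches the paper essentially point for point. Two aspects diverge. Your framing through a ``splash map'' $\Psi^{\ep}\colon\widetilde\Om\to\R^{3}$ and the appeal to scaling-invariant energy functionals are imported from the Castro--C\'ordoba--Fefferman--Gancedo--G\'omez-Serrano tradition and are not what this paper does: here one works directly on $\Om^{\ep}$ and gets $\ep$-uniformity by exhibiting, in Section \ref{sssec.222}, an explicit atlas for $\Om^{\ep}$ that is a bounded modification (vertical stretch in the cylinder, translation in the hemisphere) of a fixed atlas for $\Om$; nothing degenerates as $\ep\to0$, the chart Jacobians stay bounded above and below, and no rescaling group enters. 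Your concern about the ``singular locus'' of $\Psi^{\ep}$ and its being avoided by the dynamics therefore answers a question the construction is designed not to pose.

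The more substantive divergence is that you propose to eliminate the deformation tensor outright via the identity $\mathbb{F}(t)=\nabla\eta(t)\,F_{0}$ (both $\mathbb{F}$ and $\nabla\eta$ solve $\partial_{t}X=(\nabla^{\eta}v)X$, with data $F_{0}$ and $\mathrm{I}$) and to exploit the propagated tangency $\mathbb{F}^{\top}n=F_{0}^{\top}(\nabla\eta)^{\top}A^{\top}\mathcal{N}^{\ep}/|A^{\top}\mathcal{N}^{\ep}|=F_{0}^{\top}\mathcal{N}^{\ep}/|A^{\top}\mathcal{N}^{\ep}|=0$. Both identities are correct, and the second collapses $(\mathbb{F}\mathbb{F}^{\top})n$ to zero, so that the boundary condition \eqref{2.3e} becomes the plain viscous stress balance $[(\symeta v)-(q+1)\mathrm{I}]n=0$. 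The paper exploits neither: it keeps $\mathbb{F}$ as a separate unknown with its own energy estimate (Proposition \ref{prop4.3}) and carries the full $\mathbb{F}\mathbb{F}^{\top}-\mathrm{I}$ through the lengthy elastic boundary integrals $\mathcal{R}_{4}$, $\mathcal{R}_{5}$ in Proposition \ref{prop4.2} and $\mathcal{O}_{1}$--$\mathcal{O}_{3}$ in Proposition \ref{prop4.4}. Your route would cut the boundary analysis back to essentially the Coutand--Shkoller Navier--Stokes case, which is a genuine simplification; the price is that the bulk coupling does not disappear, since $\divop^{\eta}(\mathbb{F}\mathbb{F}^{\top})$ becomes the cubic-in-$\nabla\eta$ forcing $\divop^{\eta}(\nabla\eta\,F_{0}F_{0}^{\top}(\nabla\eta)^{\top})$ and the source $\partial_{j}^{\eta}\mathbb{F}_{im}\partial_{i}^{\eta}\mathbb{F}_{jm}$ survives in the pressure Neumann problem, so the coupled Stokes estimates still have to be redone for a more nonlinear right-hand side. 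The two approaches distribute the work differently --- the paper pays on the boundary, you would pay in the bulk --- but both close. One further correct observation of yours: $F_{0}^{\top}N_{0}=0$ forces every column of $F_{0}$ to be tangent to $\partial\Om_{0}^{\ep}$, so $F_{0}$ is rank-deficient there; the paper's construction tacitly accepts this, which means the compatibility condition $\det F_{0}=1$ announced in the introduction cannot hold up to the boundary, a tension the paper leaves unaddressed.
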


The paper is structured as follows: Section \ref{sec3} first constructs the geometric framework for singularity formation, developing specialized initial domains with cylindrical necks connecting hemispherical regions to bases and designing compatible divergence-free velocity and deformation tensor fields. Building upon this geometric foundation, Section \ref{sec2} introduces the Lagrangian coordinate framework with custom charts that reformulate the governing equations in material coordinates, establishing key commutator identities and transformation rules for differential operators. Section \ref{sec.conserved} then establishes the conserved physical energy that provides fundamental stability throughout the evolution. This enables the crucial a priori estimates in Section \ref{sec4}, where boundary regularity control, deformation tensor bounds, and higher-order energy functionals are developed with uniform constants across the domain sequence. Section \ref{sec6} establishes quantitative continuity for second-order tangential derivatives. Finally, Section \ref{sec5} synthesizes these components to demonstrate finite-time splash singularity formation through quantitative trajectory analysis, proving interface self-intersection occurs at an explicitly bounded time while preserving local smoothness until the singular moment.

\section{Construction of the Initial Data}\label{sec3}

\subsection{Initial Domain}
\begin{definition}[Domain $\Omega$]\label{defA.1}
	Let $\Omega \subset \mathbb{R}^3$ be a smooth bounded domain with boundary $\partial\Omega$, composed of three distinct open regions as shown in Figure \ref{fig:domain}:
	\begin{enumerate}
		\item $\omega$: Vertical cylinder of radius 1 and height $h$:
		\[
		\omega = \{(x_1,x_2,x_3) : x_1^2 + x_2^2 < 1,\ 2 < x_3 < 2+h\}.
		\]
		\item $\omega_+$: Lower hemisphere of radius 1 centered at $(0,0,2)$:
		\[
		\omega_+ = \{(x_1,x_2,x_3) : x_1^2 + x_2^2 + (x_3-2)^2 < 1,\ x_3 < 2\}
		\]
		with south pole at $X_+ = (0,0,1)$.
		\item $\omega_-$: Base region connecting to $\omega_+$ at $X_+$, extending downward with $\partial\omega_- \cap \partial\Omega \subset \{x_3=0\}$ and maximal $x_3$-coordinate 0.
	\end{enumerate}
	Coordinate assignments:
	\begin{itemize}
		\item Origin at $(0,0,0) \in \partial\omega_- \cap \partial\Omega \subset \{x_3=0\}$.
		\item $X_+ = (0,0,1)$ (south pole of $\omega_+$).
		\item Hemisphere top boundary: $\{(x_1,x_2,2) : x_1^2 + x_2^2 < 1\}$.
		\item Cylinder: $\{(x_1,x_2,x_3) : x_1^2 + x_2^2 < 1,\ 2 < x_3 < 2+h\}$.
	\end{itemize}
\end{definition}

\begin{figure}[h]
	\centering
	\begin{tikzpicture}[scale=0.4]
		\draw[color=green!80!black, ultra thick] (5,1.5) arc (-180:0:1cm);
		\draw[color=Green, dashed] plot[smooth,tension=.6] coordinates{   (5,1.5) (7,1.5)  };
		\draw[color=blue, dashed] plot[smooth,tension=.6] coordinates{   (5,3) (7,3)  };
		\draw (6,2) node { $ \omega  $};
		\draw (6,1) node { $ \omega_+  $};
		\draw (5,-3) node { $ \omega_-  $};
		\filldraw (6,0.5) circle (3pt) node[below] {$X_+$};
		
		\draw[color=blue,ultra thick] (5,3) arc (00:180:2cm);
		\draw[color=blue,ultra thick] (7,3) arc (00:180:4.5cm);
		
		\draw[color=blue,ultra thick] plot[smooth,tension=.6] coordinates{   (1,0) (1,2) (1,3) };
		\draw[color=blue,ultra thick] plot[smooth,tension=.6] coordinates{   (-2,-3) (-2,2) (-2,3) };

		\draw[color=blue,ultra thick] plot[smooth,tension=.6] coordinates{   (5,1.5) (5,2) (5,3) };
		\draw[color=blue,ultra thick] plot[smooth,tension=.6] coordinates{  (7,3) (7,2) (7,1.5) };
		
		\draw[color=blue,ultra thick] (1,0) arc (180:270:1cm);
		\draw[color=blue,ultra thick] (-2,-3) arc (180:270:1.5cm);
		
		\draw[color=red!40,ultra thick] plot[smooth,tension=.6] coordinates{   (2,-1) (8,-1)  };
		\draw[color=red!40,ultra thick] plot[smooth,tension=.6] coordinates{   (2,-4.5) (8,-4.5)  };
		\draw[color=blue,ultra thick] plot[smooth,tension=.6] coordinates{   (-0.5,-4.5) (2,-4.5)  };
		
		\draw[color=blue,ultra thick] (8,-1) arc (90:0:1cm);
		\draw[color=blue,ultra thick] (8,-4.5) arc (-90:0:1cm);
		
		\draw[color=blue,ultra thick] plot[smooth,tension=.6] coordinates{   (9,-2) (9,-3.5)  };
		
		\draw (0,-2.5) node { $\Omega $};
		\draw (7.3,5) node { $\partial\Omega $};
		\draw(2.5,0.5) -- (4.5,0.5);
		\draw[->] (3.5,2.5) -- (3.5,0.6);
		\draw[->] (3.5,-3.) -- (3.5,-1.1);
		\draw (3.5,-.25) node { $1$};
		\draw[color=red!40, dashed] plot[smooth,tension=.6] coordinates{   (2,-1) (2,-4.5)  };
		\draw[color=red!40, dashed] plot[smooth,tension=.6] coordinates{   (8,-1) (8,-4.5)  };
	\end{tikzpicture}
	\tdplotsetmaincoords{45}{0} 
	\begin{tikzpicture}[scale=0.8, every node/.style={font=\small}, tdplot_main_coords]
		\def\eps{1} 
		\def\h{3}     
		\def\r{1}   
		\draw[->, thick] (-3,0,0) -- (4,0,0) node[anchor=north east]{$x_1$};
		\draw[->, thick] (0,0,-3) -- (0,0,5.5+\eps/2) node[anchor=north west]{$x_3$};
		\draw[->, thick] (-2,-2,0) -- (2,2,0) node[anchor=south west]{$x_2$};

		\begin{scope}
			\fill[red!20, opacity=0.4] (4,1.5,0) -- (-0.4,1.5,0) -- (-4,-1.5,0) -- (0.4,-1.5,0) -- cycle;
			\fill[red!20, opacity=0.4] (4,1.5,-1) -- (-0.4,1.5,-1) -- (-4,-1.5,-1) -- (0.4,-1.5,-1) -- cycle;
			\fill[red!20, opacity=0.4] (4,1.5,0) -- (4,1.5,-1)  -- (-0.4,1.5,-1) -- (-0.4,1.5,0) -- cycle;
			\fill[red!20, opacity=0.4] (-4,-1.5,0) -- (-4,-1.5,-1)  -- (-0.4,1.5,-1) -- (-0.4,1.5,0) -- cycle;
			\draw[red!20, dashed] (-0.4,1.5,0) -- (-0.4,1.5,-1);
			\draw[red!20, thick] (0.4,-1.5,0) -- (0.4,-1.5,-1);	
			\draw[red!20, dashed] (-0.4,1.5,-1) -- (4,1.5,-1);
			\draw[red!20, dashed] (-0.4,1.5,-1) -- (-4,-1.5,-1);
		\end{scope}
		\node at (0.5,-0.2,-0.5) {$\omega_-$};

		\begin{scope} [shift={(0,0,\eps+\r)}]
			\draw[fill=green!30, opacity=0.7] (-\r,0,0) arc (180:360:{\r}) -- cycle;
			\fill[green!30, opacity=0.7]
			(-\r,0,0) arc (180:360:{\r} and 0.5)
			arc (0:-180:{\r} and 0.5);
			\draw[green!80!black, thick]
			(-\r,0,0) arc (180:360:{\r} and 0.5)
			arc (0:-180:{\r} and 0.5);
			\draw[green!80!black, thick] (-\r,0,0) arc (180:360:{\r}) -- cycle;
		\end{scope}
		\node at (0.2,\eps+0.3,0) {$\omega_+$};
		
		\fill[green!40!black] (0,0,\eps) circle (1pt) node[below right] {$X_+$};
		
		\begin{scope}[shift={(0,0,\eps+\r)}]
			\draw[fill=blue!30, opacity=0.7] (0,0,0) ellipse ({\r} and 0.5);
			\draw[fill=blue!30, opacity=0.7] (0,\h,0) ellipse ({\r} and 0.5);
			\draw[blue!80!black, thick]
			(-\r,0,0) -- (-\r,\h,0)
			(\r,0,0) -- (\r,\h,0);
			\draw[blue!80!black, thick] (0,\h,0) ellipse ({\r} and 0.5);
		\end{scope}
		\node at (0.5,\h/2,1.5) {$\omega$};
		\node at (0.05,0,\eps/2) {$1$};
	\end{tikzpicture}
	
	\caption{Cross-section of domain $\Omega$ showing: Cylinder $\omega$, hemisphere $\omega_+$ with south pole $X_+$, and base $\omega_-$. }
	\label{fig:domain}
\end{figure}
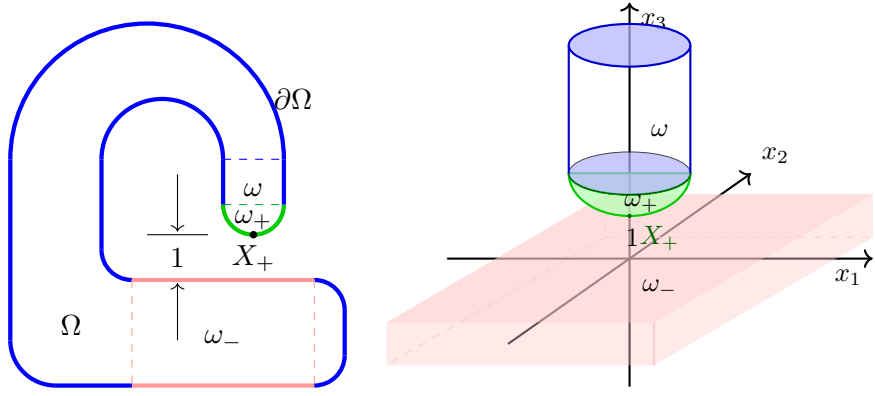

After that we define the ``Sequence of initial domain" $\Omega^{\epsilon}$.
\begin{definition}[Initial Domains $\Omega^{\epsilon}$]\label{defA.2}
	For $0 < \epsilon \ll 1$, let $\Omega \subset \mathbb{R}^3$ be the domain from Definition \ref{defA.1}. The sequence $\Omega^{\epsilon}$ is defined with the following modifications as shown in Figure \ref{fig.epsdomain}:
	\begin{enumerate}
		\item $\omega^{\epsilon}$: Vertically dilated cylinder with height $h + 1 - \epsilon$:
		\[
		\omega^{\epsilon} = \{(x_1,x_2,x_3) : x_1^2 + x_2^2 < 1, 1 + \epsilon < x_3 < 1 + \epsilon + h\}.
		\]
		
		\item $\omega^{\epsilon}_+$: Translated hemisphere centered at $(0,0,1+\epsilon)$:
		\[
		\omega^{\epsilon}_+ = \{(x_1,x_2,x_3) : x_1^2 + x_2^2 + (x_3 - 1 - \epsilon)^2 < 1, x_3 < 1 + \epsilon\}
		\]
		with south pole at $X^{\epsilon}_+ = (0,0,\epsilon)$.
		
		\item $\omega^{\epsilon}_-$: Base region satisfying:
		\begin{itemize}
			\item Located below $X^{\epsilon}_+$ with top boundary at $x_3 = 0$.
			\item $\partial\omega^{\epsilon}_- \cap \partial\Omega^{\epsilon}$ contains $\{(x_1,x_2,0) : x_1^2 + x_2^2 < \epsilon\}$.
			\item Connects smoothly to $\omega^{\epsilon}_+$ at $X^{\epsilon}_+$.
		\end{itemize}
	\end{enumerate}
	Coordinate assignments:
	\begin{itemize}
		\item Origin: $X_-:=(0,0,0) \in \partial\omega^{\epsilon}_- \cap \partial\Omega^{\epsilon} \subset \{x_3 = 0\}$.
		\item $X^{\epsilon}_+ = (0,0,\epsilon)$ (south pole).
		\item Hemisphere top boundary: $\{(x_1,x_2,1+\epsilon) : x_1^2 + x_2^2 < 1\}$.
		\item Cylinder: $\{(x_1,x_2,x_3) : x_1^2 + x_2^2 < 1, 1 + \epsilon < x_3 < 1 + \epsilon + h\}$.
	\end{itemize}
\end{definition}

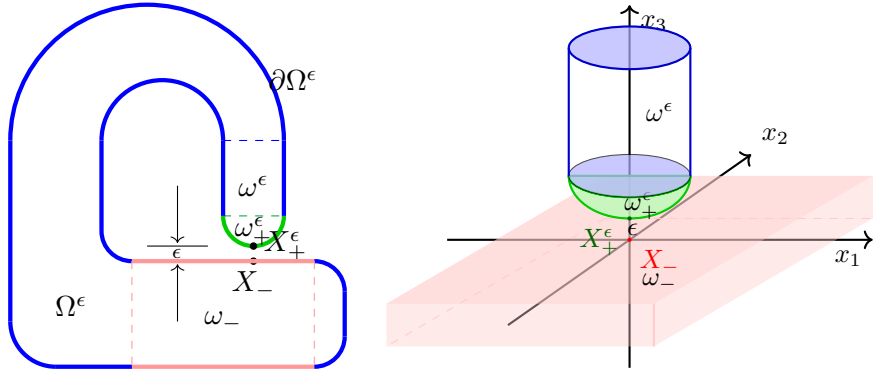
\begin{figure}[h]
	\begin{center}
		
		\begin{tikzpicture}[scale=0.4]
			
			\draw[color=green!80!black,ultra thick] (19,0.5) arc (-180:0:1cm);
			\draw[color=Green, dashed] plot[smooth,tension=.6] coordinates{   (19,0.5) (21,0.5)  };
			\draw[color=blue, dashed] plot[smooth,tension=.6] coordinates{   (19,3) (21,3)  };
			\draw (20,1.5) node { $ \omega^ \epsilon   $};
			\draw (20,0) node { $ \omega_+^ \epsilon   $};
			\draw (19,-3) node { $ \omega_-   $};
			\filldraw (20,-0.5) circle (3pt) node[right] {$X_+^\epsilon$};
			\filldraw (20,-1) circle (3pt) node[below] {$X_-$};
			
			\draw[color=blue,ultra thick] (19,3) arc (00:180:2cm);
			\draw[color=blue,ultra thick] (21,3) arc (00:180:4.5cm);
			
			\draw[color=blue,ultra thick] plot[smooth,tension=.6] coordinates{   (15,0) (15,2) (15,3) };
			\draw[color=blue,ultra thick] plot[smooth,tension=.6] coordinates{   (12,-3) (12,2) (12,3) };

			\draw[color=blue,ultra thick] plot[smooth,tension=.6] coordinates{   (19,0.5) (19,2) (19,3) };
			\draw[color=blue,ultra thick] plot[smooth,tension=.6] coordinates{  (21,3) (21,2) (21,0.5) };
			
			\draw[color=blue,ultra thick] (15,0) arc (180:270:1cm);
			\draw[color=blue,ultra thick] (12,-3) arc (180:270:1.5cm);
			
			\draw[color=red!40,ultra thick] plot[smooth,tension=.6] coordinates{   (16,-1) (22,-1)  };
			\draw[color=red!40,ultra thick] plot[smooth,tension=.6] coordinates{   (16,-4.5) (22,-4.5)  };
			\draw[color=blue,ultra thick] plot[smooth,tension=.6] coordinates{   (13.5,-4.5) (16,-4.5)  };
			
			\draw[color=blue,ultra thick] (22,-1) arc (90:0:1cm);
			\draw[color=blue,ultra thick] (22,-4.5) arc (-90:0:1cm);
			
			\draw[color=blue,ultra thick] plot[smooth,tension=.6] coordinates{   (23,-2) (23,-3.5)  };
			
			\draw (14,-2.5) node { $\Omega^ \epsilon $};
			\draw (21.3,5) node { $\partial\Omega^ \epsilon $};

			\draw(16.5,-0.5) -- (18.5,-0.5);
			\draw[->] (17.5,1.5) -- (17.5,-0.4);
			\draw[->] (17.5,-3.) -- (17.5,-1.1);
			\draw (17.5,-.75) node { $_{ \epsilon }$};

			\draw[color=red!40, dashed] plot[smooth,tension=.6] coordinates{   (16,-1) (16,-4.5)  };
			\draw[color=red!40, dashed] plot[smooth,tension=.6] coordinates{   (22,-1) (22,-4.5)  };
		\end{tikzpicture}
		\tdplotsetmaincoords{45}{0} 
		\begin{tikzpicture}[scale=0.8, every node/.style={font=\small}, tdplot_main_coords]
			\draw[->, thick] (-3,0,0) -- (4,0,0) node[anchor=north east]{$x_1$};
			\draw[->, thick] (0,0,-3) -- (0,0,5.5) node[anchor=north west]{$x_3$};
			\draw[->, thick] (-2,-2,0) -- (2,2,0) node[anchor=south west]{$x_2$};

			\def\eps{0.5} 
			\def\h{3}     
			\def\r{1}   
			
			\begin{scope}
				\fill[red!20, opacity=0.4] (4,1.5,0) -- (-0.4,1.5,0) -- (-4,-1.5,0) -- (0.4,-1.5,0) -- cycle;
				\fill[red!20, opacity=0.4] (4,1.5,-1) -- (-0.4,1.5,-1) -- (-4,-1.5,-1) -- (0.4,-1.5,-1) -- cycle;
				\fill[red!20, opacity=0.4] (4,1.5,0) -- (4,1.5,-1)  -- (-0.4,1.5,-1) -- (-0.4,1.5,0) -- cycle;
				\fill[red!20, opacity=0.4] (-4,-1.5,0) -- (-4,-1.5,-1)  -- (-0.4,1.5,-1) -- (-0.4,1.5,0) -- cycle;
				\draw[red!20, dashed] (-0.4,1.5,0) -- (-0.4,1.5,-1);
				\draw[red!20, thick] (0.4,-1.5,0) -- (0.4,-1.5,-1);	
				\draw[red!20, dashed] (-0.4,1.5,-1) -- (4,1.5,-1);
				\draw[red!20, dashed] (-0.4,1.5,-1) -- (-4,-1.5,-1);
			\end{scope}
			\node at (0.5,-0.2,-0.8) {$\omega_-$};	
			\fill[red] (0,0,0) circle (1pt) node[below right] {$X_-$};
			
			\begin{scope} [shift={(0,0,\eps+\r)}]
				\draw[fill=green!30, opacity=0.7] (-\r,0,0) arc (180:360:{\r}) -- cycle;
				\fill[green!30, opacity=0.7]
				(-\r,0,0) arc (180:360:{\r} and 0.5)
				arc (0:-180:{\r} and 0.5);
				\draw[green!80!black, thick]
				(-\r,0,0) arc (180:360:{\r} and 0.5)
				arc (0:-180:{\r} and 0.5);
				\draw[green!80!black, thick] (-\r,0,0) arc (180:360:{\r}) -- cycle;
			\end{scope}
			\node at (0.2,\eps+0.3,0) {$\omega_+^\epsilon$};
			
			\fill[green!40!black] (0,\eps,0) circle (1pt) node[below left] {$X_+^\epsilon$};
			
			\begin{scope}[shift={(0,0,\eps+\r)}]
				\draw[fill=blue!30, opacity=0.7] (0,0,0) ellipse ({\r} and 0.5);
				\draw[fill=blue!30, opacity=0.7] (0,\h,0) ellipse ({\r} and 0.5);
				\draw[blue!80!black, thick]
				(-\r,0,0) -- (-\r,\h,0)
				(\r,0,0) -- (\r,\h,0);
				\draw[blue!80!black, thick] (0,\h,0) ellipse ({\r} and 0.5);
			\end{scope}
			\node at (0.5,\h/2,1.5) {$\omega^\epsilon$};
			\node at (0.05,0,\eps/2) {$\epsilon$};
		\end{tikzpicture}
		\caption{Sequence of initial domain $\Omega^{\epsilon}$}\label{fig.epsdomain}			
	\end{center}
\end{figure}

\subsection{Local coordinate charts for $\Omega$ and $\Omega^{\epsilon}$}
\subsubsection{Local Coordinate Charts for $\Omega$}
We construct an atlas of coordinate charts as follows:

\underline{Boundary Charts:}
Let $\{U_l\}_{l=1}^K$ be an open covering of $\partial\Omega$ where each chart $\theta^l : B(0,1) \to U_l$ is a $C^\infty$ diffeomorphism satisfying:
\begin{align*}
	\theta^l(B^+) &= U_l \cap \Omega, \\
	\theta^l(B^0) &= U_l \cap \partial\Omega, \\
	\det \nabla \theta^l &= C_l > 0, \quad \text{(constant)}
\end{align*}
with $B = B(0,1)$, $B^+ = B \cap \{x_3 > 0\}$, and $B^0 = \overline{B} \cap \{x_3 = 0\}$.

To classify these charts, we introduce length scales:
\begin{align*}
	\delta_1 = \frac{h^2}{15(h+3)}, \quad
	\delta_2 = \frac{h(15+4h)}{15(h+3)} < \frac{h}{3},
\end{align*}
satisfying $0 < \delta_1 < \delta_2 < h/3$, which requires $h < 5$.

The neck region $\omega = \{(x_1,x_2,x_3) : x_1^2 + x_2^2 < 1,\ 2 < x_3 < 2+h\}$ is partitioned vertically:
\begin{align} \label{A.1}
	\begin{split}
		\text{Lower segment:} & \quad 2 <x_3< 2 + h/3. \\
		\text{Middle segment:} & \quad 2 + h/3 \leq x_3 \leq 2 + 2h/3. \\
		\text{Upper segment:} & \quad 2 + 2h/3 < x_3 < 2 + h.
	\end{split}
\end{align}
The middle segment contains a critical subcylinder:
\begin{align} \label{A.2}
	\mathcal{C}_r := \omega \cap \{2 + h/3 + \delta_1 < x_3 < 2 + h/3 + \delta_2\}.
\end{align}

The boundary charts are categorized into three disjoint classes:
\begin{enumerate}
	\item \textbf{Neck charts} ($1 \leq l \leq K_1$): Cover $\mathcal{C}_r$ and satisfy:
	\begin{align} \label{A.3}
		\mathcal{C}_r \subset \bigcup_{l=1}^{K_1} \theta^l(B^+) \subset \omega \cap \{2 + h/3 < x_3 < 2 + 2h/3\}.
	\end{align}
	
	\item \textbf{Non-hemisphere charts} ($K_1+1 \leq l \leq K_2$): Disjoint from $\omega_+$ and $\mathcal{C}_r$:
	\begin{align} \label{A.4}
		\theta^l(B^+) \cap \mathcal{C}_r = \varnothing, \quad \theta^l(B^+) \cap \omega_+ = \varnothing.
	\end{align}
	
	\item \textbf{Hemisphere charts} ($K_2+1 \leq l \leq K$): Intersect $\omega_+$ but disjoint from $\mathcal{C}_r$:
	\begin{align} \label{A.5}
		\theta^l(B^+) \cap \mathcal{C}_r = \varnothing, \quad \theta^l(B^+) \cap \omega_+ \neq \varnothing.
	\end{align}
\end{enumerate}
The images of non-hemisphere and hemisphere charts are mutually disjoint.

\medskip
\underline{Interior Charts:}
For the domain interior, let $\{U_l\}_{l=K+1}^L$ be a fimily of open sets contained in $\Omega$ such that $\{U_l\}_{l=1}^L$ is an open cover of $\Omega$ with $C^\infty$ diffeomorphisms $\theta^l : B(0,1) \to U_l$ satisfying $\det \nabla \theta^l = C_l > 0$. These are similarly partitioned:
\begin{enumerate}
	\item \textbf{Neck charts} ($K+1 \leq l \leq L_1$): Cover $\mathcal{C}_r$ and satisfy:
	\begin{align} \label{A.6}
		\mathcal{C}_r \subset \bigcup_{l=K+1}^{L_1} \theta^l(B) \subset \omega \cap \{2 + h/3 < x_3 < 2 + 2h/3\}.
	\end{align}
	
	\item \textbf{Non-hemisphere charts} ($L_1+1 \leq l \leq L_2$): Disjoint from $\omega_+$ and $\mathcal{C}_r$:
	\begin{align} \label{A.7}
		\theta^l(B) \cap \mathcal{C}_r = \varnothing, \quad \theta^l(B) \cap \omega_+ = \varnothing.
	\end{align}
	
	\item \textbf{Hemisphere charts} ($L_2+1 \leq l \leq L$): Intersect $\omega_+$ but disjoint from $\mathcal{C}_r$:
	\begin{align} \label{A.8}
		\theta^l(B) \cap \mathcal{C}_r = \varnothing, \quad \theta^l(B) \cap \omega_+ \neq \varnothing.
	\end{align}
\end{enumerate}
The images of non-hemisphere and hemisphere interior charts are mutually disjoint.

\subsubsection{Local charts for $\Omega^{\epsilon}$}\label{sssec.222}
The coordinate charts for $\Omega^{\epsilon}$ are obtained by modifying the charts $\{\theta^{l}\}_{l=1}^{L}$ of $\Omega$ as follows. For sufficiently small $\epsilon > 0$:

\begin{enumerate}
	\item For $l \in I_1$ (neck charts):
	\begin{align*}
		(\theta^{\epsilon})^{l} &= H^{\epsilon} \circ \theta^{l}, \\
		H^{\epsilon}(x_1,x_2,x_3) &= \left( x_1, x_2, \frac{h+3+3\epsilon}{h} \left( x_3 - 2 - \frac{h}{3} \right) + \frac{h}{3} + 1 - \epsilon \right),
	\end{align*}
	with $\det \nabla (\theta^{\epsilon})^{l} = \frac{h+3+3\epsilon}{h} C_l$.
	
	\item For $l \in I_2$ (non-hemisphere charts):
	\[
	(\theta^{\epsilon})^{l} = \theta^{l}.
	\]
	
	\item For $l \in I_3$ (hemisphere charts):
	\[
	(\theta^{\epsilon})^{l}(x) = \theta^{l}(x) - (1 - \epsilon) e_3.
	\]
\end{enumerate}

The cut-off functions $\xi^{\epsilon,l}$ satisfy $\xi^{\epsilon,l} \circ (\theta^{\epsilon})^{l} = \xi^l \circ \theta^l$, leading to:
\[
\sum_{l=1}^{L} \xi^{\epsilon,l}(x) = \sum_{l \in I_2} \xi^l(x) + \sum_{l \in I_3} \xi^l(x + (1-\epsilon)e_3) + \sum_{l \in I_1} \xi^l(x_1, x_2, g^{\epsilon}(x_3)),
\]
where $g^{\epsilon}(x_3) = \frac{h}{h+3+3\epsilon} \left( x_3 - 1 - \frac{h}{3} + \epsilon \right) + 2 + \frac{h}{3}$.

While the sum equals 1 in most regions, it may decrease below 1 in the stretched neck region $\omega^{\epsilon} \cap \left\{1 - \epsilon + \frac{h}{3} < x_3 < 2 + \frac{2h}{3}\right\}$ due to $\epsilon$-dependence in the vertical derivative. For more details, one can see \cite{CS5}.

\subsection{Initial Velocity Field Construction}
We construct the initial velocity fields \(u^{\epsilon}\) following \cite{CS5}. Define a smooth boundary function \(a_{0}^{\epsilon}\) satisfying:
\begin{enumerate}[(1)]
	\item \(a_{0}^{\epsilon} = 1\) near \(X_{+}^{\epsilon}\) on \(\partial\omega_{+}^{\epsilon}\), and \(0\) on \(\partial\omega_{-} \cup (\partial\omega^{\epsilon} \cap \partial\Omega^{\epsilon})\).
	\item \(\int_{\partial\Omega^{\epsilon}} a_{0}^{\epsilon} \, dS = 0\) with \(\|a_{0}^{\epsilon}\|_{H^{2.5}(\partial\Omega^{\epsilon})} \leq b_{0}\) (\(\epsilon\)-independent).
\end{enumerate}

The initial velocity \(u_{0}^{\epsilon}\) solves the Stokes problem:
\begin{align}\label{3.1}
	\begin{cases}
		-\Delta u_{0}^{\epsilon} + \nabla s_{0}^{\epsilon} = 0 & \text{in } \Omega^{\epsilon}, \\
		\divop u_{0}^{\epsilon} = 0 & \text{in } \Omega^{\epsilon}, \\
		[\sym(u_{0}^{\epsilon}) \mathcal{N}^{\epsilon}] \cdot \tau_{\alpha}^{\epsilon} = 0 & \text{on } \partial\Omega^{\epsilon}, \\
		u_{0}^{\epsilon} \cdot \mathcal{N}^{\epsilon} = a_{0}^{\epsilon} & \text{on } \partial\Omega^{\epsilon},
	\end{cases}
\end{align}
where \(\mathcal{N}^{\epsilon}\) is the outward unit normal, \(\{\tau_{\alpha}^{\epsilon}\}\) an orthonormal tangent basis, and \(s_{0}^{\epsilon}\) the initial pressure. By elliptic regularity \cite{AS,SS}:
\begin{align}\label{3.2}
	\|u_{0}^{\epsilon}\|_{H^{3}(\Omega^{\epsilon})} \leq C \|a_{0}^{\epsilon}\|_{H^{2.5}(\partial\Omega^{\epsilon})} \leq C b_{0}.
\end{align}
This solution automatically satisfies:
\[
[\sym(u_{0}^{\epsilon}) \mathcal{N}^{\epsilon}] \times \mathcal{N}^{\epsilon} = 0 \quad \text{on } \partial\Omega^{\epsilon}.
\]
Additionally, we require compatibility with the full boundary condition:
\begin{align}\label{3.3}
	[(\sym(u_{0}^{\epsilon}) - p_{0}^{\epsilon} I) + (F_{0}^{\epsilon} (F_{0}^{\epsilon})^\top  - I)] \mathcal{N}^{\epsilon} = 0 \quad \text{on } \partial\Omega^{\epsilon},
\end{align}
where \(p_{0}^{\epsilon}\) will be determined later.

\subsection{Initial deformation tensor construction}
The compatibility condition derived from \eqref{3.1} and \eqref{3.3} requires:
\begin{align}\label{3.4}
	\left[(F_{0}^{\epsilon} (F_{0}^{\epsilon})^{\top} - I) \mathcal{N}^{\epsilon} \right] \cdot \tau_{\alpha}^{\epsilon} = 0 \quad \text{on } \partial\Omega^{\epsilon}.
\end{align}

To satisfy this, we define smooth divergence-free vector fields \( G_{0i} \in H^2(\Omega) \) (\(i=1,2,3\)) satisfying:
\begin{enumerate}[(1)]
	\item \(\divop G_{0i} = 0\) in \(\Omega\),
	\item \(\int_{\partial\omega \cap \partial\Omega} G_{0i} \cdot \mathcal{N}  dS = 0\), \(\int_{\partial\Omega} G_{0i} \cdot \mathcal{N}  dS = 0\),
	\item \((G_{0i})_3 = 0\) and \(\partial_3 G_{0i} = 0\) on \(\partial\omega \setminus (\overline{\omega} \cap \partial\Omega)\).
\end{enumerate}
Such fields exist as shown in \cite{HLZ}.

For \(\epsilon > 0\), define \(G_{0i}^\epsilon\) via vertical scaling in \(\omega^\epsilon\):
\begin{align}\label{3.5}
	(G_{0i}^{\epsilon})_j(x) =
	\begin{cases}
		(G_{0i})_j\left(x_1,x_2, 2 + h - \tfrac{h(2+h-x_3)}{h+1-\epsilon}\right) & j=1,2, \\
		\tfrac{h+1-\epsilon}{h}(G_{0i})_j\left(x_1,x_2, 2 + h - \tfrac{h(2+h-x_3)}{h+1-\epsilon}\right) & j=3,
	\end{cases}
\end{align}
and translation elsewhere:
\begin{align}\label{3.6}
	G_{0i}^{\epsilon}(x) =
	\begin{cases}
		G_{0i}(x_h, x_3 + 1 - \epsilon) & x \in \omega_{+}^{\epsilon}, \\
		G_{0i}(x) & \text{otherwise}.
	\end{cases}
\end{align}
This preserves \(\divop G_{0i}^\epsilon = 0\) and \(\int_{\partial\Omega} G_{0i}^\epsilon \cdot \mathcal{N}^\epsilon  dS = 0\).

The deformation tensor solves:
\begin{align}\label{3.7}
	\begin{cases}
		\curl F_{0i}^{\epsilon} = G_{0i}^{\epsilon} & \text{in } \Omega^{\epsilon}, \\
		\divop F_{0i}^{\epsilon} = 0 & \text{in } \Omega^{\epsilon}, \\
		(F_{0i}^{\epsilon}-I) \cdot \mathcal{N}^{\epsilon} = 0 & \text{on } \partial\Omega^{\epsilon},
	\end{cases}
\end{align}
where $F_{0i}^{\epsilon} := ((F_{0}^{\epsilon})_{1i} ,(F_{0}^{\epsilon})_{2i}, (F_{0}^{\epsilon})_{3i} )$ denotes the $i$-th column of $F_{0}$.

\begin{remark}
	The boundary condition \((F_{0i}^{\epsilon}-I) \cdot \mathcal{N}^{\epsilon} = 0\) is just a restriction on initial data, different from the usual initial condition \(F_{0i}^{\epsilon} \cdot \mathcal{N}^{\epsilon} = 0\), it does not hpld on the whole interval $[0,T]$, so it makes sense when we give another boundary condition which holds on $[0,T]$. And menwhile, it ensures compatibility with \eqref{3.4} and \eqref{1.2}, without requiring \(F_{0}^{\epsilon} \equiv 0\) on \(\partial\Omega^{\epsilon}\).
\end{remark}

By Lemma \ref{lemB.2} and boundary regularity,  we obtain
$$
\|F_{0}^{\epsilon}\|_{H^3(\Omega^{\epsilon})} \leq C
(\|F_{0}^{\epsilon}-I\|_{H^3(\Omega^{\epsilon})} +(\text{vol}(\Omega^{\epsilon}))^{1/2}) \leq C b_1,
$$
with \(C > 0\) independent of \(\epsilon\), depends only on \(\text{vol}(\Omega)\) and \(|\partial\Omega|_{H^{3.5}}\).
\subsection{Initial pressure construction}
According to \eqref{1.2} and \eqref{3.7}, for given \(u_{0}^{\epsilon}\) and \(F_{0}^{\epsilon}\), we define \(p_{0}^{\epsilon}\) via:
\begin{align}\label{3.10}
	\begin{cases}
		-\Delta p_{0}^{\epsilon} = \partial_j u_{0i}^{\epsilon} \partial_i u_{0j}^{\epsilon} - \partial_i F_{0jk}^{\epsilon} \partial_j F_{0ik}^{\epsilon} & \text{in } \Omega^{\epsilon}, \\
		p_{0}^{\epsilon} = (\mathcal{N}^{\epsilon})^\top \sym(u_{0}^{\epsilon}) \mathcal{N}^{\epsilon} & \text{on } \partial\Omega^{\epsilon}.
	\end{cases}
\end{align}

Standard elliptic regularity theory yields \(\epsilon\)-independent estimates:
\begin{align}\label{3.12}
	\|p_{0}^{\epsilon}\|_{H^2(\Omega^{\epsilon})}
	&\leq C \left( \|u_{0}^{\epsilon}\|_{H^3(\Omega^{\epsilon})}^2 + \|u_{0}^{\epsilon}\|_{H^3(\Omega^{\epsilon})} + \|F_{0}^{\epsilon}\|_{H^3(\Omega^{\epsilon})}^2 + 1 \right) \nonumber\\
	&\leq C P(b_0, b_1),
\end{align}
where \(P\) is a polynomial with constant term, and \(C > 0\) depends only on \(\text{vol}(\Omega)\) and \(|\partial\Omega|_{H^{3.5}}\).

\section{Reformulation in Lagrangian coordinates}\label{sec2}

For $\epsilon > 0$, let $\Omega^\epsilon$ denote the reference domain with boundary $\partial\Omega^\epsilon$, as defined in Appendix A. Consider system \eqref{1.1}-\eqref{1.2} with initial data $(\Omega_0^\epsilon, u_0^\epsilon, F_0^\epsilon)$, and denote its solutions by $(\Omega^\epsilon, u^\epsilon, F^\epsilon)$.

The Eulerian-Lagrangian correspondence is established via the flow map
$$\eta^\epsilon(t,\cdot) \colon \Omega^\epsilon \to \Omega(t)$$
satisfying
\begin{align} \label{2.1}
	\begin{cases}
		\partial_t \eta^\epsilon(t,x) = u^\epsilon(t, \eta^\epsilon(t,x)), & t > 0, \\
		\eta^\epsilon(0,x) = x.
	\end{cases}
\end{align}
Let $\nabla \eta^\epsilon$ denotes the deformation gradient. The incompressibility condition $\divop u^\epsilon = 0$ implies:
\[
J := \det \nabla \eta^\epsilon \equiv 1.
\]
Consequently, the boundary evolves according to:
\[
\Gamma(t) := \partial \Omega(t) = \eta^\epsilon(t, \partial \Omega^\epsilon).
\]

Next, define the Lagrangian quantities:
\begin{align*}
	v^\epsilon &= u^\epsilon \circ \eta^\epsilon, & & \text{(Lagrangian velocity)} \\
	\mathbb{F}^\epsilon &= F^\epsilon \circ \eta^\epsilon, & & \text{(Lagrangian deformation tensor)} \\
	q^\epsilon &= p^\epsilon \circ \eta^\epsilon, & & \text{(Lagrangian pressure)} \\
	A^\epsilon &= [\nabla \eta^\epsilon]^{-1}, \\
	g_{\alpha\beta}^\epsilon &= \partial_\alpha \eta^\epsilon \cdot \partial_\beta \eta^\epsilon, & & \text{(Induced metric on $\Omega^\epsilon$)} \\
	g^\epsilon &= \det(g_{\alpha\beta}^\epsilon).
\end{align*}
For simplicity, we suppress the superscript $\epsilon$ for $(\eta, v, \mathbb{F}, q, u_0, F_0, A)$.

Using the flow map \eqref{2.1}, the differential operators transform as:
\begin{alignat*}{2}
	\partial_j^\eta &= A_{kj} \partial_k, \quad  \nabla^\eta = A^\top \nabla, \\
	\divop^\eta v &= (\divop u) \circ \eta = \partial_j^\eta v_j = 0, \\
	(\divop^\eta \mathbb{F}^\top)_i &= [(\divop F^\top) \circ \eta]_i = \partial_j^\eta \mathbb{F}_{ji} = 0, \\
	(\divop^\eta (\mathbb{F}\mathbb{F}^\top))_i &= [(\divop (F F^\top)) \circ \eta]_i = \partial_j^\eta (\mathbb{F}_{ir} \mathbb{F}_{jr}), \\
	(\curl^\eta v)_i &= [(\curl u) \circ \eta]_i = \varepsilon_{ijk} \partial_j^\eta v_k, \\
	(\sym^\eta v)_{ij} &= [\sym(u) \circ \eta]_{ij} = \partial_j^\eta v_i + \partial_i^\eta v_j, \\
	\Delta^\eta v &= (\Delta u) \circ \eta = \partial_j^\eta \partial_j^\eta v,
\end{alignat*}
where $\varepsilon_{ijk}$ denotes the Levi-Civita permutation symbol:
\[
\varepsilon_{ijk} =
\begin{cases}
	1 & \text{if } (i,j,k) \text{ is an even permutation of } (1,2,3), \\
	-1 & \text{if } (i,j,k) \text{ is an odd permutation of } (1,2,3), \\
	0 & \text{otherwise}.
\end{cases}
\]

From the derivative formulas for inverse matrices and determinants (cf. \cite{Hao}), we have:
\begin{align}\label{deriv.A}
	\partial_m A_{ki} &= -A_{ji} \partial_j \partial_m \eta_l A_{kl} = -\partial_i^\eta \partial_m \eta_l A_{kl}, \\
	\partial_t A_{ki} &= -A_{ji} \partial_j v_l A_{kl} = -\partial_i^\eta v_l A_{kl}.
\end{align}
The Piola identity holds due to incompressibility ($J \equiv 1$):
\begin{align}\label{Piola}
	\partial_k A_{ki} = \partial_k (J A_{ki}) = 0,
\end{align}
which follows from the cofactor matrix representation of $A$:
\begin{align}\label{4.6}
	A = \begin{bmatrix}
		\partial_2 \eta \times \partial_3 \eta \\
		\partial_3 \eta \times \partial_1 \eta \\
		\partial_1 \eta \times \partial_2 \eta
	\end{bmatrix}.
\end{align}

\begin{lemma}[Commutator Identities] \label{lem:commutators}
	The following commutation relations hold:
	\begin{align*}
		[\partial_t, \partial_i^\eta] &= -\partial_i^\eta v_l \partial_l^\eta, \\
		[\partial_j, \partial_i^\eta] &= -\partial_i^\eta \partial_j \eta_l \partial_l^\eta, \\
		[\partial_j^\eta, \partial_i^\eta] &= 0.
	\end{align*}
\end{lemma}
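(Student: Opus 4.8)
The plan is to verify the three relations by direct computation, applying the Leibniz rule to the first-order operators $\partial_i^\eta = A_{ki}\partial_k$ and then substituting the derivative formulas \eqref{deriv.A}. For the first relation, let $f$ be an arbitrary smooth scalar field and expand
\[
[\partial_t,\partial_i^\eta]f = \partial_t(A_{ki}\partial_k f) - A_{ki}\partial_k(\partial_t f) = (\partial_t A_{ki})\,\partial_k f,
\]
where the two second-order terms cancel because $\partial_t$ and $\partial_k$ commute. Inserting $\partial_t A_{ki} = -\partial_i^\eta v_l\, A_{kl}$ and recognizing $A_{kl}\partial_k = \partial_l^\eta$ yields $[\partial_t,\partial_i^\eta]f = -\partial_i^\eta v_l\,\partial_l^\eta f$, as claimed. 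The second relation has the same structure: $[\partial_j,\partial_i^\eta]f = (\partial_j A_{ki})\,\partial_k f$ with the mixed second derivatives cancelling, and then $\partial_j A_{ki} = -\partial_i^\eta \partial_j\eta_l\, A_{kl}$ gives exactly the stated formula.

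For the third identity I would expand the double composition,
\[
\partial_j^\eta\partial_i^\eta f = A_{mj}\partial_m(A_{ki}\partial_k f) = A_{mj}(\partial_m A_{ki})\,\partial_k f + A_{mj}A_{ki}\,\partial_m\partial_k f ,
\]
subtract the analogous expression with $i$ and $j$ interchanged, and observe that the symmetric second-order block $A_{mj}A_{ki}\,\partial_m\partial_k f$ is invariant under $i\leftrightarrow j$ after relabelling the dummy indices $m\leftrightarrow k$. It then remains to show that the first-order remainder cancels; substituting $\partial_m A_{ki} = -A_{ni}\,\partial_n\partial_m\eta_l\, A_{kl}$ turns it into $-A_{mj}A_{ni}\,\partial_n\partial_m\eta_l\, A_{kl}\,\partial_k f$, which is symmetric in the pair $(i,j)$ precisely because the Hessian $\partial_n\partial_m\eta_l$ is symmetric in $(m,n)$. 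Hence $[\partial_j^\eta,\partial_i^\eta]=0$. Conceptually, this just reflects that $\partial_i^\eta$ is the pullback under $\eta$ of the Eulerian derivative $\partial_i$, so $\partial_j^\eta\partial_i^\eta$ is the pullback of $\partial_j\partial_i$, and ordinary partial derivatives commute; I may record this remark as a sanity check.

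There is no serious obstacle here: the computation is routine once the formulas \eqref{deriv.A} and the Piola identity \eqref{Piola} are available. The only point requiring a moment of care is the third identity, where one must track the contracted indices carefully to see that the apparent obstruction — the term involving the Hessian of $\eta$ — is annihilated by the symmetry $\partial_m\partial_n\eta_l = \partial_n\partial_m\eta_l$. I would present all three computations for a general smooth test function and then note that, by linearity, the operator identities apply verbatim to each component of the Lagrangian fields $v$, $\mathbb{F}$ and $q$ used in the subsequent sections.
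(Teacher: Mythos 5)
Your proof is correct and follows essentially the same route as the paper: direct computation from the derivative formulas \eqref{deriv.A}, with the symmetry of the Hessian of $\eta$ annihilating the first-order remainder in $[\partial_j^\eta,\partial_i^\eta]$. The only cosmetic difference is that you subtract the two expansions of the double composition, whereas the paper chains $\partial_j^\eta\partial_i^\eta f$ into $\partial_i^\eta\partial_j^\eta f$ in a single string of equalities via the intermediate commutator $[\partial_k,\partial_i^\eta]$; incidentally, the Piola identity you mention is not actually needed for any of the three relations.
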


\begin{proof}
	The first two identities follow directly from \eqref{deriv.A}. For the last identity, consider any smooth function $f$:
	\begin{align*}
		\partial_j^\eta (\partial_i^\eta f)
		&= A_{kj} \partial_k (\partial_i^\eta f) \\
		&= A_{kj} [\partial_k, \partial_i^\eta] f + A_{kj} A_{mi} \partial_m \partial_k f \\
		&= -A_{kj} A_{mi} \partial_m \partial_k \eta_l \partial_l^\eta f + A_{mi} \partial_m (A_{kj} \partial_k f) - A_{mi} (\partial_m A_{kj}) \partial_k f \\
		&= -A_{kj} A_{mi} \partial_m \partial_k \eta_l \partial_l^\eta f + \partial_i^\eta (\partial_j^\eta f) + A_{mi} A_{nj} \partial_n \partial_m \eta_l A_{kl} \partial_k f \\
		&= \partial_i^\eta (\partial_j^\eta f) + \left( A_{ki} A_{mj} \partial_m \partial_k \eta_l - A_{kj} A_{mi} \partial_m \partial_k \eta_l \right) \partial_l^\eta f \\
		&= \partial_i^\eta (\partial_j^\eta f) + \left( A_{mi} A_{kj} \partial_k \partial_m \eta_l - A_{kj} A_{mi} \partial_m \partial_k \eta_l \right) \partial_l^\eta f \\
		&= \partial_i^\eta (\partial_j^\eta f). \qedhere
	\end{align*}
\end{proof}

By incompressibility $\divop^\eta v = 0$ and Lemma \ref{lem:commutators}, we derive:
\begin{align*}
	\Delta^\eta v_i
	&= \partial_j^\eta \partial_j^\eta v_i + \partial_i^\eta (\partial_j^\eta v_j) \\
	&= \partial_j^\eta \partial_j^\eta v_i + \partial_j^\eta \partial_i^\eta v_j \\
	&= \partial_j^\eta (\sym^\eta v)_{ij},
\end{align*}
yielding the vector identity:
\begin{align} \label{laplacian_identity}
	\Delta^\eta v = \divop^\eta (\sym^\eta v).
\end{align}
The system \eqref{1.3} is therefore expressed in Lagrangian coordinates on $\Omega^\epsilon$ as:
\begin{equation}\label{2.2}
	\begin{cases}\eta_i(t, x)=x_i+\int_0^t v_i(s, x) d s & \text { in }[0, T] \times \Omega^\epsilon, \\ \partial_t v_i+\partial_i^\eta q-\Delta^\eta v_i=\sum_{j=1}^3\left(\left(F_0\right)_j \cdot \nabla\right)^2 \eta_i & \text { in }[0, T] \times \Omega^\epsilon, \\ \operatorname{div}^\eta v=0, & \text { in }[0, T] \times \Omega^\epsilon, \\ {\left[\left(\mathcal{S}^\eta v\right)_{i j}-q \delta_{i j}+\mathbb{F}_{i m} \mathbb{F}_{j m}-\delta_{i j}\right] n_j=0} & \text { on }[0, T] \times \partial \Omega^\epsilon, \\ (\eta, v)=\left(e, u_0\right) & \text { in }\{t=0\} \times \Omega^\epsilon .\end{cases}
\end{equation}
where $e(x) = x$ is the identity map. The unit normal $n^\epsilon$ at $\eta(t,x)$ is given by:
\begin{equation} \label{normal_transform}
	n^\epsilon = \frac{(A^\epsilon)^\top \mathcal{N}^\epsilon}{|(A^\epsilon)^\top \mathcal{N}^\epsilon|},
\end{equation}
with $\mathcal{N}^\epsilon$ being the outward unit normal to $\partial\Omega^\epsilon$.

Define  $\mathbb{F}_j:=\left(\mathbb{F}_{1 j}, \mathbb{F}_{2 j}, \mathbb{F}_{3 j}\right)$ to be the $j$-th column of $\mathbb{F}$. By (\ref{deriv.A})-(\ref{Piola}), next, applying $A$ to  (\ref{2.2}), we obtain, for any $i,j$,
\[
\partial_t(A_{ik}\mathbb{F}_{kj})=\partial_t A_{ik}\mathbb{F}_{kj}+A_{ik}\partial_t\mathbb{F}_{kj}=-A_{lk}\partial_l v_r A_{ir}\mathbb{F}_{kj}+A_{ik}\mathbb{F}_{mj}A_{lm}\partial_{l}v_k=0,
\]
which implies that, $A_{ik}\mathbb{F}_{kj}=\delta_{ik}(F_0)_{kj}=(F_0)_{ij}$, which means
\[
\mathbb{F}_{kj}=\mathbb{F}_{lj}A_{li}\partial_l\eta_k=(F_0)_{lj}\partial_l\eta_k=((F_0)_j\cdot\nabla)\eta_k.
\]
Consequently, it motivates us to eliminate the elastic field $\mathbb{F}$ from the system (\ref{2.3}). Indeed, we may rewrite the term:
\begin{equation}\label{NEW1}
	A_{kj} \partial_k (\mathbb{F}_{im} \mathbb{F}_{jm})=A_{kj} \partial_k \mathbb{F}_{im} \mathbb{F}_{jm}=(F_{0})_{km}\partial_{k}[((F_{0})_{m}\cdot\nabla)\eta_{i}].
\end{equation}

Then the system (\ref{2.2}) can be simplified  equivalently as  the following  free-surface incompressible Euler system with a forcing term induced by the flow map.
\begin{equation}\label{2.3}
	\begin{cases}
		\eta_i(t,x) = x_i + \int_0^t v_i(s,x)  ds & \text{in } [0,T] \times \Omega^\epsilon, \\
		\partial_t v_i + \partial_i^\eta q - \Delta^\eta v_i = \sum_{j=1}^3((F_0)_j\cdot\nabla)^2\eta_i & \text{in } [0,T] \times \Omega^\epsilon,  \\
		\divop^\eta v = 0, & \text{in } [0,T] \times \Omega^\epsilon,  \\
		\left[ (\sym^\eta v)_{ij} - q\delta_{ij} +\mathbb{F}_{im}\mathbb{F}_{jm} - \delta_{ij} \right] n_j = 0 & \text{on } [0,T] \times \partial\Omega^\epsilon, \\
		(\eta, v) = (e, u_0) & \text{in } \{t=0\} \times \Omega^\epsilon.
	\end{cases}
\end{equation}

In the system (\ref{2.3}) and initial condition (\ref{3.3}), the initial field $\mathbb{F}$ can be regarded as a parameter vector that satisfies
$$\begin{cases}
	\partial_j\left(F_0\right)_{j i}=0 \quad &\text { in } \quad \Omega^\epsilon,\\ \left(F_0-I\right)_i \cdot \mathcal{N}=0, \quad &\text { on } \quad \partial \Omega^\epsilon.
\end{cases}
$$
\section{Preserved physical energies}\label{sec.conserved}
\begin{proposition}
	The system \eqref{2.3} conserves the energy functional:
	
	\begin{align*}
		E_0(t) &:= \int_{\Omega^{\epsilon}} \left( |v(t,x)|^2 + \sum_{j=1}^3|((F_0)_j\cdot\nabla)\eta|^2 \right) dx + \int_0^t \int_{\Omega^{\epsilon}} |\sym^\eta v(s,x)|^2  dx  ds \\
		&= \int_{\Omega^{\epsilon}} \left( |u_0(x)|^2 + |F_0(x)|^2 \right) dx = E_0(0).
	\end{align*}
	
\end{proposition}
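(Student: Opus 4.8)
The plan is to establish the conservation law by a standard energy argument: multiply the momentum equation \eqref{2.3b} by $v_i$, multiply the deformation equation \eqref{2.3c} by $\mathbb{F}_{ij}$, integrate over $\Omega^\epsilon$, sum, and show that all the cross terms and boundary contributions cancel exactly. First I would compute $\frac{1}{2}\frac{d}{dt}\int_{\Omega^\epsilon}|v|^2\,dx = \int_{\Omega^\epsilon} v_i\,\partial_t v_i\,dx$ and substitute $\partial_t v_i = \Delta^\eta v_i - \partial_i^\eta q + \partial_j^\eta(\mathbb{F}_{im}\mathbb{F}_{jm})$. Using the identity \eqref{laplacian_identity}, the viscous term becomes $\int_{\Omega^\epsilon} v_i\,\partial_j^\eta(\sym^\eta v)_{ij}\,dx$. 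The key tool throughout is integration by parts for the operator $\partial_j^\eta = A_{kj}\partial_k$: by the Piola identity \eqref{Piola}, $\partial_k A_{kj} = 0$, so $\partial_j^\eta$ behaves like a genuine divergence-form operator and $\int_{\Omega^\epsilon}\partial_j^\eta(\phi)\,dx = \int_{\partial\Omega^\epsilon}\phi\,A_{kj}\mathcal{N}_k^\epsilon\,dS$; equivalently $\int_{\Omega^\epsilon} f\,\partial_j^\eta g\,dx = -\int_{\Omega^\epsilon} g\,\partial_j^\eta f\,dx + \int_{\partial\Omega^\epsilon} fg\,(A^\top\mathcal{N}^\epsilon)_j\,dS$.

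Integrating by parts in the viscous, pressure, and elastic terms produces: a bulk dissipation term $-\frac12\int_{\Omega^\epsilon}|\sym^\eta v|^2\,dx$ (after symmetrizing, using $\divop^\eta v = 0$), a bulk elastic term $-\int_{\Omega^\epsilon}\mathbb{F}_{im}\mathbb{F}_{jm}\,\partial_j^\eta v_i\,dx$, a vanishing pressure bulk term (since $\partial_i^\eta v_i = \divop^\eta v = 0$), and boundary integrals that assemble precisely into $\int_{\partial\Omega^\epsilon} v_i\left[(\sym^\eta v)_{ij} - q\delta_{ij} + \mathbb{F}_{im}\mathbb{F}_{jm} - \delta_{ij}\right](A^\top\mathcal{N}^\epsilon)_j\,dS$ plus the leftover $\int_{\partial\Omega^\epsilon} v_i\,\delta_{ij}(A^\top\mathcal{N}^\epsilon)_j\,dS = \int_{\partial\Omega^\epsilon} v\cdot(A^\top\mathcal{N}^\epsilon)\,dS$. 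The bracketed boundary term vanishes by the stress condition \eqref{2.3e} (recalling $n^\epsilon \parallel (A^\epsilon)^\top\mathcal{N}^\epsilon$), and the leftover term $\int_{\partial\Omega^\epsilon} v\cdot(A^\top\mathcal{N}^\epsilon)\,dS$ equals $\int_{\Omega^\epsilon}\partial_j^\eta v_j\,dx = \int_{\Omega^\epsilon}\divop^\eta v\,dx = 0$ by the Piola identity again. Next I would treat the deformation contribution: $\frac12\frac{d}{dt}\int_{\Omega^\epsilon}|\mathbb{F}|^2\,dx = \int_{\Omega^\epsilon}\mathbb{F}_{ij}\,\partial_t\mathbb{F}_{ij}\,dx = \int_{\Omega^\epsilon}\mathbb{F}_{ij}\mathbb{F}_{mj}\,\partial_m^\eta v_i\,dx = \int_{\Omega^\epsilon}\mathbb{F}_{im}\mathbb{F}_{jm}\,\partial_j^\eta v_i\,dx$ (relabeling), which exactly cancels the bulk elastic term from the momentum computation.

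Combining, $\frac12\frac{d}{dt}\int_{\Omega^\epsilon}(|v|^2 + |\mathbb{F}|^2)\,dx = -\frac12\int_{\Omega^\epsilon}|\sym^\eta v|^2\,dx$, which is exactly $\frac{d}{dt}E_0(t) = 0$; integrating in time from $0$ to $t$ and using the initial conditions \eqref{2.3f} gives the claim. The main obstacle I anticipate is bookkeeping the boundary terms: one must verify that the boundary integrals arising separately from the viscous term $\int v_i\,\partial_j^\eta(\sym^\eta v)_{ij}$, the pressure term $-\int v_i\,\partial_i^\eta q$, and the elastic term $\int v_i\,\partial_j^\eta(\mathbb{F}_{im}\mathbb{F}_{jm})$ combine with the correct signs and index placements so that the full Cauchy-stress bracket of \eqref{2.3e} appears; in particular one must be careful that $(\sym^\eta v)_{ij}n_j$ hits $v_i$ with the symmetric combination and not $\partial_j^\eta v_i$ alone, which is where the symmetrization $\int v_i\,\partial_j^\eta(\sym^\eta v)_{ij} = -\frac12\int(\sym^\eta v)_{ij}(\sym^\eta v)_{ij} + \frac12\int_{\partial}(\sym^\eta v)_{ij}v_i\,(A^\top\mathcal{N})_j\cdot 2$ enters. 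A secondary point requiring a line of justification is that $\eta$ is smooth enough on $[0,T]$ for these manipulations and that $J\equiv 1$ holds, both of which are guaranteed by the local well-posedness framework and incompressibility assumed in the preceding sections.
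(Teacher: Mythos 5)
Your proof is correct and follows essentially the same energy argument as the paper: multiply the momentum and deformation equations by $v_i$ and $\mathbb{F}_{ij}$, integrate, let the elastic bulk terms cancel, invoke the stress boundary condition \eqref{2.3e}, and dispose of the leftover $\int_{\partial\Omega^\epsilon} v\cdot(A^\top\mathcal{N}^\epsilon)\,dS$ via the Piola identity and $\divop^\eta v = 0$. The only (cosmetic) difference in bookkeeping is that you integrate $\int v_i\,\partial_j^\eta(\mathbb{F}_{im}\mathbb{F}_{jm})\,dx$ by parts at the outset and cancel the resulting bulk term against the $\mathbb{F}$-equation contribution, whereas the paper retains both bulk integrals and observes their sum is a total divergence; the resulting boundary term, and the add-and-subtract of $\delta_{ij}$, are handled identically.
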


\begin{proof}
	
	Taking the $L_{2}$  inner product of the \eqref{2.3} with $v$,  and apply the divergence theorem, we have
	\begin{align}\label{energy1}
		\frac{1}{2} \frac{d}{dt} \int_{\Omega^{\epsilon}} |v|^2  dx
		+ \int_{\Omega^{\epsilon}} (\sym^\eta v)_{ij} \partial_j^\eta v_i  dx
		&= \int_{\partial\Omega^{\epsilon}} \left[ n_j (\sym^\eta v)_{ij} - n_i q \right] v_i  dS \\
		&\quad + \int_{\Omega^{\epsilon}} \sum_{j=1}^3((F_0)_j\cdot\nabla)^2\eta_i v_i  dx, \nonumber
	\end{align}
	since $\partial_j\left(F_0\right)_{j i}=0$ and  $A_{ik}\mathbb{F}_{kj}=(F_0)_{ij}$, we obtain
	\begin{equation}\label{energy2}
		\begin{aligned}
			& -\int_{\Omega^{\varepsilon}}\left(\left(F_0\right)_j \cdot \nabla\right)^2 \eta_k v_k d x \\
			= & -\int_{\Omega^{\varepsilon}}\left(F_0\right)_{i j} \partial_i\left(\left(F_0\right)_{m j} \partial_m \eta_k\right) v_k d x \\
			= & -\int_{\Omega^{\varepsilon}} \partial_i\left(\left(F_0\right)_{i j}\left(F_0\right)_{m j} \partial_m \eta_k v_k\right) d x+\int_{\Omega^{\varepsilon}}\left(F_0\right)_{i j}\left(F_0\right)_{m j} \partial_m \eta_k \partial_i v_k d x\\
			= & -\int_{\Omega^{\varepsilon}} \partial_i\left(A_{i l} \mathbb{F}_{l j} A_{m h} \mathbb{F}_{h j} \partial_m \eta_k v_k\right) d x  +\int_{\Omega^{\varepsilon}}\left(F_0\right)_j\cdot \nabla \eta_k\left(F_0\right)_j \cdot \nabla \partial_t \eta_k d x \\
			= & -\int_{\Omega^{\varepsilon}} \partial_l^\eta\left(\mathbb{F}_{lj} {\mathbb{F}}_{k j}v_k\right)  d x+\frac{1}{2} \frac{d}{d t} \int_{\Omega^{\varepsilon}}\left|\left(F_0\right)_j \cdot \nabla \eta\right|^2 d x.
		\end{aligned}
	\end{equation}
	Summing \eqref{energy1} and \eqref{energy2}, and use the boundary condition, we get
	\begin{align*}
		\frac{1}{2} \frac{d}{dt}& \int_{\Omega^{\epsilon}} (|v|^2 +\sum_{j=1}^3 |((F_0)_j\cdot\nabla)\eta|^2) dx + \int_{\Omega^{\epsilon}} (\sym^\eta v)_{ij} \partial_j^\eta v_i  dx \\
		&= \int_{\partial\Omega^{\epsilon}} \left[ n_j (\sym^\eta v)_{ij} - n_i q \right] v_i  dS
		+ \sum_{j=1}^3 \int_{\Omega^{\varepsilon}} \partial_l^\eta\left(\mathbb{F}_{lj} {\mathbb{F}}_{k j} v_k\right) d x  dx.
	\end{align*}
	and the boundary condition \eqref{2.3}, we get
	$$\begin{aligned}
		\int_{\partial\Omega^{\epsilon}} & \left[ n_k (\sym^\eta v)_{lk} - n_l q +\sum_{j=1}^3  n_k (\mathbb{F}_{lj} \mathbb{F}_{kj} - \delta_{lk}) \right] v_l dS \\
		&= \int_{\partial\Omega^{\epsilon}} n_k \left[ (\sym^\eta v)_{lk} - q \delta_{lk} + \sum_{j=1}^3 \mathbb{F}_{lj} \mathbb{F}_{kj} - \delta_{lk} \right] v_l  dS = 0.
	\end{aligned}
	$$
	The bulk term simplifies to
	\begin{align*}
		\int_{\Omega^{\epsilon}} (\sym^\eta v)_{ij} \partial_j^\eta v_i  dx = \frac{1}{2} \int_{\Omega^{\epsilon}} |\sym^\eta v|^2  dx,
	\end{align*}
	since
	\begin{align}\label{eq.sysnv}
		(\sym^\eta v)_{ij} \partial_j^\eta v_i = \frac{1}{2} |\sym^\eta v|^2
	\end{align}
	for incompressible flows.
	Thus we obtain
	\begin{align*}
		\frac{1}{2} \frac{d}{dt} \int_{\Omega^{\epsilon}} (|v|^2 +\sum_{j=1}^3 |((F_0)_j\cdot\nabla)\eta|^2) dx
		+ \frac{1}{2} \int_{\Omega^{\epsilon}} |\sym^\eta v|^2  dx = 0.
	\end{align*}
	Integrating over $[0,t]$ completes the proof.
	
\end{proof}

\section{A priori estimates}\label{sec4}

In this section, we aim to demonstrate the existence of a finite-time splash singularity by establishing fundamental a priori estimates for the equations \eqref{2.3} in Lagrangian coordinates. From this point onward, we assume that \( (\eta, v, \mathbb{F}, q) \) represents a smooth solution to the problem \eqref{2.3} over the time interval \( [0, T] \), where \( T > 0 \). For each \( t \in [0, T] \), we define a energy function as follows:
\begin{align}\label{4.1a}
	E^{\epsilon}(t)=&1+ \|\eta(t,\cdot)\|_{H^3(\Omega^{\epsilon})}^{2}+\|v(t,\cdot)\|_{H^2(\Omega^{\epsilon})}^{2}+\|v_{t}(t,\cdot)\|_{L^2(\Omega^{\epsilon})}^{2}\nonumber\\
	&+\int_{0}^{t}\|\nabla v(s,\cdot)\|_{H^2(\Omega^{\epsilon})}^{2}ds
	+\int_{0}^{t}\|q(s,\cdot)\|_{H^2(\Omega^{\epsilon})}^{2}ds+\int_{0}^{t}\|\nabla v_{s}(s,\cdot)\|_{L^2(\Omega^{\epsilon})}^{2}ds.
\end{align}
\begin{proposition}[A Priori Energy Estimate]\label{prop4.6}
	Assuming $\partial\Omega(t)$ remains self-intersection free and $\epsilon$-independent. Define $\mathcal{M}_0 = E^\epsilon(0)+ \|F_{0}\|^{2}_{H^3(\Omega^{\epsilon})}$, and there exist $T>0$  such that the smooth solution $(\eta,v,\mathbb{F},q)$ to \eqref{2.3} satisfies:
	\begin{align}\label{4.59}
		\sup_{t\in[0,T]} E^{\epsilon}(t)\leq \mathcal{Q}(\mathcal{M}_{0}),
	\end{align}
	$\mathcal{Q}$ is a polynomial with $\epsilon$-independent coefficients.
\end{proposition}
\begin{remark}
	In fact, the definition of energy should include "deformation tensor" term such that $\sum_{j=1}^3\|((F_0)_j\cdot\nabla)\eta(t,\cdot)\|_{H^2(\Omega^{\epsilon})}^{2}$ and $\sum_{j=1}^3\|((F_0)_j\cdot\nabla)\partial_t\eta(t,\cdot)\|_{L^2(\Omega^{\epsilon})}^{2}$, but using our initial value construction and the Cauchy-Schwarz inequality, these higher-order norms can be controlled by the $E^{\epsilon}(t)$ and $\|F_{0}\|_{H^3(\Omega^{\epsilon})}$. Meanwhile, the order of "deformation tensor" term the is lower than that in \cite{ZJ},  in contrast to Zhang \cite{ZJ}, in this paper, because the total energy we adopt involves a spacetime function, we no longer need to perform the div-curl-tangential decomposition for the high-order norms of $F_{0}\cdot\nabla\eta$ and $v$, this is the most significant difference between our paper and other works.
\end{remark}
\subsection{A priori assumption}
We make the following fundamental assumption: There exist constants $0 < \theta \ll 1$ and $T > 0$ (independent of $\epsilon$) such that:
\begin{align}\label{4.2}
	\sup_{t \in [0, T]} \|\nabla \eta(t) - I\|_{L^{\infty}(\Omega^{\epsilon})} \leq \theta^{2}.
\end{align}
This implies $\nabla\eta$ remains uniformly close to the identity matrix throughout $[0,T]$.

\begin{lemma}[Deformation Gradient Estimates]\label{lem4.1}
	Under assumption \eqref{4.2}, for sufficiently small $\theta > 0$:
	\begin{align}\label{4.3}
		\sup_{t \in [0, T]} \left( \| A(t) - I \|_{L^{\infty}} + \| A A^\top (t) - I \|_{L^{\infty}} \right) \leq \theta,
	\end{align}
	where $A = (\nabla \eta)^{-1}$ is the deformation gradient.
\end{lemma}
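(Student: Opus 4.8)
The plan is to read \eqref{4.3} as a purely pointwise linear-algebra estimate, uniform in $x \in \Omega^{\epsilon}$ and $t \in [0,T]$, obtained from \eqref{4.2} by a Neumann series argument. Write $\nabla\eta(t,x) = I + B(t,x)$ where $B := \nabla\eta - I$, so that assumption \eqref{4.2} becomes $\|B\|_{L^{\infty}(\Omega^{\epsilon})} \le \theta^{2}$. Fix from the outset a smallness threshold, say $\theta \le 1/10$ (in particular $\theta^{2} < 1/2$); then at every point $(t,x)$ the matrix $I + B(t,x)$ is invertible and
\[
A = (\nabla\eta)^{-1} = (I+B)^{-1} = \sum_{k\ge 0} (-B)^{k},
\]
the series converging in any submultiplicative matrix norm (Frobenius or operator norm; the dimension-dependent constant relating the two is harmless and absorbed into the choice of $\theta$). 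Hence
\[
\|A - I\|_{L^{\infty}} = \Bigl\|\sum_{k\ge 1}(-B)^{k}\Bigr\|_{L^{\infty}} \le \sum_{k\ge 1}\|B\|_{L^{\infty}}^{k} \le \frac{\theta^{2}}{1-\theta^{2}} \le 2\theta^{2}.
\]

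For the second term I would decompose
\[
AA^{\top} - I = (A - I)A^{\top} + (A^{\top} - I) = (A - I)A^{\top} + (A - I)^{\top},
\]
and use $\|A\|_{L^{\infty}} \le 1 + \|A - I\|_{L^{\infty}} \le 1 + 2\theta^{2}$ together with the bound just obtained to get
\[
\|AA^{\top} - I\|_{L^{\infty}} \le 2\theta^{2}(1 + 2\theta^{2}) + 2\theta^{2} = 4\theta^{2} + 4\theta^{4}.
\]
Adding the two estimates yields $\|A - I\|_{L^{\infty}} + \|AA^{\top} - I\|_{L^{\infty}} \le 6\theta^{2} + 4\theta^{4} \le \theta$ for $\theta \le 1/10$, and since none of the constants depend on $t$ or $\epsilon$ one may take the supremum over $t \in [0,T]$ to conclude \eqref{4.3}.

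I do not expect a genuine obstacle here: the statement is a soft, self-contained consequence of the a priori assumption, and the only points requiring care are bookkeeping ones — committing to a single submultiplicative norm so that the Neumann-series estimate is clean, and recording the explicit threshold on $\theta$ before it is invoked, since the same $\theta$ is used in the subsequent sections that rely on Lemma \ref{lem4.1}.
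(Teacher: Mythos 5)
Your proof is correct and follows essentially the same route as the paper: derive a pointwise bound $\|A - I\|_{L^\infty} \lesssim \theta^2$ from \eqref{4.2} (the paper asserts this without spelling out the Neumann series, which you helpfully make explicit), then use the algebraic decomposition $AA^\top - I = (A-I)A^\top + (A^\top - I)$ (the paper uses the equivalent $A(A^\top - I) + (A - I)$), and finally fix a smallness threshold for $\theta$. The only cosmetic difference is that you track explicit constants whereas the paper absorbs them into generic $C$, $C'$.
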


\begin{proof}
	From \eqref{4.2} we have:
	\begin{align}
		\|\nabla \eta\|_{L^\infty} &\leq C, \label{4.4} \\
		\|\partial_j \eta_i - \delta_{ij}\|_{L^\infty} &\leq \theta^2, \label{4.5}
	\end{align}
	with $C$ independent of $\epsilon$.
	
	Since $A = (\nabla \eta)^{-1}$, \eqref{4.5} implies:
	\begin{align}
		\|A - I\|_{L^\infty} \leq C \theta^2. \label{4.7}
	\end{align}
	Using the identity $AA^\top - I = A(A^\top - I) + (A - I)$:
	\begin{align}
		\|AA^\top - I\|_{L^\infty} &\leq \|A\|_{L^\infty} \|A^\top - I\|_{L^\infty} + \|A - I\|_{L^\infty} \nonumber \\
		&\leq (1 + C\theta^2)C\theta^2 + C\theta^2 \leq C'\theta^2. \label{4.9}
	\end{align}
	Choosing $\theta < \min(1, 1/\sqrt{3C'})$ yields \eqref{4.3}.
\end{proof}
\subsection{The boundary regularity of velocity}
Next, since we are dealing with boundary conditions involving mixed pressure, velocity, and elasticity, which differ from those considered in \cite{CS5}, we need to revisit the boundary regularity of velocity.

To obtain $\epsilon$-independent a priori estimates, we pull back system \eqref{2.3} to the boundary charts $(\theta^{\epsilon})^{l}$. For each $l=1,\dots,K$, define the chart parametrization:
\begin{align}\label{2.4}
	(\eta^{\epsilon})^{l}(t,\cdot) := \eta^{\epsilon}(t,\cdot) \circ (\theta^{\epsilon})^{l} : B^{+} \to \Omega(t).
\end{align}

The Lagrangian quantities on each chart are:
\begin{align*}
	(v^{\epsilon})^{l} &:= u \circ (\eta^{\epsilon})^{l}, &
	(q^{\epsilon})^{l} &:= p \circ (\eta^{\epsilon})^{l}, \\
	(\mathbb{F}^{\epsilon})^{l} &:= F \circ (\eta^{\epsilon})^{l}, &
	(A^{\epsilon})^{l} &:= \left[\nabla (\eta^{\epsilon})^{l}\right]^{-1}, \\
	(n^{\epsilon})^{l} &:= (g^{\epsilon})^{-1/2} \partial_{y_1} (\eta^{\epsilon})^{l} \times \partial_{y_2} (\eta^{\epsilon})^{l},
\end{align*}
where $g^{\epsilon} = \det(g_{\alpha\beta}^\epsilon)$ is the induced metric determinant. Note that $\det \nabla (\theta^{\epsilon})^{l} = C^l > 0$ remains constant.

Dropping $\epsilon$-superscripts for simplicity, the pulled-back system becomes:

\begin{align}\label{2.5}
	\begin{cases}
		\eta^{l}(t,y) = \theta^{l}(y) + \int_0^t v^{l}(s,y)  ds & \text{in } [0,T]\times B^{+}, \\
		\partial_t v_i^l + A_{ji}^l \partial_j q^l - \Delta^\eta v_i^l = \sum_{j=1}^3((F_0^l)_j\cdot\nabla)^2\eta_i^l & \text{in } [0,T]\times B^{+}, \\
		A_{ji}^l \partial_j v_i^l = 0, \quad\partial_j(F_{0}^{l})_{ji}=0& \text{in } [0,T]\times B^{+}, \\
		\left[ (A_{rj}^l \partial_r v_i^l + A_{ri}^l \partial_r v_j^l - q^l \delta_{ij}) + (\mathbb{F}_{im}^l\mathbb{F}_{jm}^l - \delta_{ij}) \right] n_j^l = 0 & \text{on } [0,T]\times B^{0}, \\
		(\eta^l, v^l)|_{t=0} = (\theta^l, u_0 \circ \theta^l) & \text{in } B^{+},
	\end{cases}
\end{align}

where index $l$ is not summed in any equations.

By the Norm Equivalence Lemma \cite{CS2} and Sobolev-type inequalities in Appendix A, system \eqref{2.5} enables uniform analysis of $(\eta^l, v^l, q^l, \mathbb{F}^l)$ independent of $\epsilon$.

\begin{proposition}[Boundary Velocity Estimate]\label{prop4.2}
	For any $\delta > 0$, there exists $C_\delta > 0$ independent of $\epsilon$ such that
	\begin{align}\label{4.10}
		\int_0^T \|v(t)\|_{H^{5/2}(\partial\Omega^\epsilon)}^2 dt \leq \mathcal{Q}(\mathcal{M}_0) + \delta \sup_{t \in [0,T]} E^\epsilon(t) + C_\delta T \mathcal{P}\left( \sup_{t \in [0,T]} E^\epsilon(t) \right),
	\end{align}
	which  $\mathcal{P}(\cdot)$ or $\mathcal{Q}(\cdot)$ a polynomial with $\epsilon$-independent coefficients.
\end{proposition}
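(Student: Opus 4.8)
The plan is to estimate $\|v(t)\|_{H^{5/2}(\partial\Omega^\epsilon)}$ by working chart-by-chart on the pulled-back system \eqref{2.5} and exploiting the boundary condition \eqref{2.3e} together with the Stokes-type elliptic regularity for the pair $(v,q)$. First I would recall that on the flat boundary piece $B^0$, the trace norm $\|v^l\|_{H^{5/2}(B^0)}$ can be controlled by $\|v^l\|_{H^3(B^+)}$; however, $H^3$-in-space is not available uniformly, so instead the strategy is to split $v$ into its tangential and normal components relative to $\eta$ and treat them separately. The normal component is governed directly by the kinematic relation and the divergence-free condition $A^l_{ji}\partial_j v^l_i=0$, which after the $A$-to-identity reduction of Lemma \ref{lem4.1} gives $\partial_3 v^l_3 = (\text{lower order in }\nabla\eta-I)\cdot\nabla v$, so $\|v^l_3\|_{H^{5/2}(B^0)}$ reduces to tangential derivatives of $v$ plus an error that is $O(\theta^2)$ times $\sqrt{E^\epsilon}$, absorbable into the $\delta\sup E^\epsilon$ term. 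The tangential component is where the boundary condition enters: decomposing \eqref{2.3e} into its tangential-tangential, tangential-normal, and normal-normal parts, the tangential-normal part reads (schematically) $\partial_3 v^l_\tau + \partial_\tau v^l_3 + (\mathbb{F}\mathbb{F}^\top)_{\tau 3} = 0$ on $B^0$, which expresses the normal derivative of the tangential velocity in terms of tangential derivatives of $v_3$ and a quadratic term in $\mathbb{F}$.

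Next I would set up an elliptic estimate. Combining the momentum equation \eqref{2.3b}, viewed as $-\Delta^\eta v + \nabla^\eta q = -v_t + \divop^\eta(\mathbb{F}\mathbb{F}^\top)$, with $\divop^\eta v=0$ and the boundary stress condition \eqref{2.3e}, one has a Stokes system with Neumann-type (stress) boundary data. The relevant estimate is of the form
\begin{align*}
\|v\|_{H^{5/2}(\partial\Omega^\epsilon)} \lesssim \|v\|_{H^3(\Omega^\epsilon)} \quad\text{but we only control } \int_0^T\|\nabla v\|_{H^2}^2,
\end{align*}
so the correct move is to estimate $\|v\|_{H^{5/2}(\partial\Omega^\epsilon)}$ by $\|\nabla v\|_{H^2(\Omega^\epsilon)}$ up to lower-order terms (using that $H^{5/2}(\partial\Omega^\epsilon)$ is the trace of $H^3(\Omega^\epsilon)$ and that the $L^2(\Omega^\epsilon)$ part of $v$ is controlled by $E_0$ via the conserved energy of Section \ref{sec.conserved} plus $E^\epsilon$). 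Then time-integrating, $\int_0^T\|v\|_{H^{5/2}(\partial\Omega^\epsilon)}^2\,dt \lesssim \int_0^T\|\nabla v\|_{H^2(\Omega^\epsilon)}^2\,dt + \int_0^T(\text{l.o.t.})\,dt$. The first integral is literally one of the terms in $E^\epsilon(t)$, hence bounded by $\sup_{[0,T]}E^\epsilon(t)$; to make it appear with a small constant $\delta$ rather than $1$, I would instead run the elliptic estimate in a form that isolates the top-order boundary contribution and uses interpolation $\|v\|_{H^{5/2}(\partial\Omega^\epsilon)}\le \delta\|v\|_{H^3(\Omega^\epsilon)} + C_\delta\|v\|_{H^{2}(\Omega^\epsilon)}$ type inequalities, routing the $H^3$ piece through the time-integral term and the $H^2$ piece through the pointwise-in-time energy and the Gronwall-type remainder $T\mathcal{P}(\sup E^\epsilon)$.

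The main obstacle is the boundary condition \eqref{2.3e}: unlike Navier-Stokes (where $q=0$ or a pure pressure condition) or the MHD case, here the stress is the coupled combination $(\sym^\eta v - q\,\mathrm{I}) + (\mathbb{F}\mathbb{F}^\top-\mathrm{I})$, so the elliptic boundary data for the Stokes problem is contaminated by $\mathbb{F}\mathbb{F}^\top$ and by $q$ itself; extracting $\|v\|_{H^{5/2}}$ requires controlling $\|q\|_{H^{3/2}(\partial\Omega^\epsilon)}$ and $\|\mathbb{F}\mathbb{F}^\top\|_{H^{3/2}(\partial\Omega^\epsilon)}$, which in turn forces a trace estimate $\|\mathbb{F}\|_{H^2(\partial\Omega^\epsilon)}\lesssim\|\mathbb{F}\|_{H^{5/2}(\Omega^\epsilon)}$ — but only $\|\mathbb{F}\|_{H^2(\Omega^\epsilon)}$ is in $E^\epsilon$. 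I would resolve this by using the product/trace estimate $\|\mathbb{F}\mathbb{F}^\top\|_{H^{3/2}(\partial\Omega^\epsilon)} \lesssim \|\mathbb{F}\|_{L^\infty}\|\mathbb{F}\|_{H^2(\Omega^\epsilon)}$ together with the a priori bound $\|\mathbb{F}\|_{L^\infty}\lesssim \sqrt{E^\epsilon}$ (Sobolev embedding $H^2\hookrightarrow L^\infty$ in 3D), so the elastic boundary term contributes $\mathcal{P}(\sup E^\epsilon)$ and, after time integration, $T\mathcal{P}(\sup E^\epsilon)$; the pressure term $\|q\|_{H^{3/2}(\partial\Omega^\epsilon)}$ is handled by its own trace bound $\lesssim\|q\|_{H^2(\Omega^\epsilon)}$ and absorbed via $\int_0^T\|q\|_{H^2(\Omega^\epsilon)}^2\,dt\le\sup_{[0,T]}E^\epsilon(t)$, again split with $\delta$ and $C_\delta$ by interpolation. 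Assembling the chart contributions with the partition of unity $\{\xi^{\epsilon,l}\}$ and invoking the Norm Equivalence Lemma to pass from $\sum_l\|v^l\|_{H^{5/2}(B^0)}^2$ back to $\|v\|_{H^{5/2}(\partial\Omega^\epsilon)}^2$ with $\epsilon$-uniform constants then yields \eqref{4.10}.
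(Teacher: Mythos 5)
There is a genuine gap, and it is the centerpiece of your argument: the interpolation inequality
\[
\|v\|_{H^{5/2}(\partial\Omega^\epsilon)} \leq \delta\,\|v\|_{H^3(\Omega^\epsilon)} + C_\delta\,\|v\|_{H^2(\Omega^\epsilon)}
\]
is false. The trace operator $H^3(\Omega^\epsilon)\to H^{5/2}(\partial\Omega^\epsilon)$ is bounded and surjective but \emph{not} compact, so there is no Ehrling-type compensated estimate that lets you trade down the top-order interior norm. A concrete counterexample on the flat model $B^+$: take $v_n(y)=n^{-5/2}e^{iny_1}\chi(y_1,y_2)\psi(ny_3)$ with $\chi,\psi$ smooth, compactly supported, $\psi(0)=1$; then $\|v_n\|_{H^{5/2}(B^0)}\sim 1$, $\|v_n\|_{H^3(B^+)}\sim 1$, while $\|v_n\|_{H^2(B^+)}\sim n^{-1}\to 0$, so the proposed inequality would force $1\lesssim\delta+o(1)$, impossible for small $\delta$. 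Without that inequality, every per-time-slice elliptic route you sketch --- whether bounding $\|v\|_{H^{5/2}(\partial\Omega^\epsilon)}$ by $\|\nabla v\|_{H^2(\Omega^\epsilon)}+\|v\|_{L^2(\Omega^\epsilon)}$, or running the Stokes problem with stress boundary data --- only produces
\[
\int_0^T\|v\|_{H^{5/2}(\partial\Omega^\epsilon)}^2\,dt \;\leq\; C\sup_{t\in[0,T]}E^\epsilon(t),
\]
with a \emph{fixed} constant $C$ rather than a small $\delta$. That is precisely what cannot be tolerated: the $\delta$-smallness of the leading term in \eqref{4.10} is what allows it to be absorbed in the closing of Proposition~\ref{prop4.5}. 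Moreover, the paper's Stokes estimate via Lemma~\ref{lemB.1} runs in the opposite logical direction: it \emph{inputs} $\|v\|_{L^2_t H^{5/2}(\partial\Omega^\epsilon)}$ (the conclusion of \eqref{4.10}) to control the time-integrated $H^3$ interior norm. Trying to derive \eqref{4.10} from that same Stokes estimate is therefore circular.

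The mechanism the paper actually exploits is a time-integrated, parabolic (viscous) tangential energy estimate rather than a stationary elliptic one. Applying $\bar\partial_\alpha\bar\partial_\beta$ to the pulled-back momentum equation \eqref{2.5}, testing against $\varsigma^2\bar\partial_\alpha\bar\partial_\beta v_i$, and integrating by parts produces the dissipation term $\int_0^t\|\varsigma\bar\partial^2\nabla v\|_{L^2(B^+)}^2\,ds$ on the good side of \eqref{4.17}; the full stress boundary condition \eqref{2.3e} is used to rewrite the boundary integrals as bulk divergences, so they cancel up to the controllable terms $\mathcal{R}_6,\mathcal{R}_7$. The boundary trace then follows from the trace theorem applied to $\bar\partial^2 v\in L^2_tH^1$. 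The $\delta$-smallness of the remainders $\mathcal{R}_1,\ldots,\mathcal{R}_7$ is structural: either the coefficient is $AA^\top-\mathrm{I}$, which is $O(\theta)\leq\delta$ by \eqref{4.3}; or the term carries extra factors of $\nabla^2\eta$ or $\nabla^3\eta$, small in $L^2_t$ via \eqref{4.20}; or a Cauchy--Schwarz in time produces a $T^{1/2}$ or $T$ prefactor. The $C\mathcal{M}_0$ comes from evaluating the boundary energy at $t=0$. Your handling of the $\mathbb{F}\mathbb{F}^\top$ and $q$ boundary contributions by product and trace estimates is consonant with the paper's treatment of $\mathcal{R}_4$ and $\mathcal{R}_1$, but the per-time elliptic framing cannot supply the needed $\delta$-smallness; the estimate must be obtained through the time-integrated dissipation produced by differentiating the equation tangentially.
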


\begin{proof}
	Let $\varsigma^{l}$ be the cut-off function for $l=1,\cdots,K$. By \eqref{2.5}, we have the following:
	
	\begin{align}\label{4.11}
		\int_{B^{+}}\varsigma^{l}\bar{\partial}_{\alpha}\bar{\partial}_{\beta}(\partial_{t}v_{i}^{l}+A_{ji}^{l}\partial_{j}q^{l}-\partial_{j}(A_{jr}^{l}A_{kr}^{l}\partial_{k}v_{i}^{l})-\sum_{j=1}^3((F_0^l)_j\cdot\nabla)^2\eta_i^l))\varsigma^{l}\bar{\partial}_{\alpha}\bar{\partial}_{\beta}v_{i}^{l}dy=0.
	\end{align}
	To simplify the notation, we fix  $l\in\{1,\cdots,K\}$ and drop the superscript. Using the Piola identity \eqref{Piola}, where $J^{l}$ is constant, we obtain
	
	\begin{align}\label{4.12}
		&\int_{B^{+}}\varsigma^{2}\bar{\partial}_{\alpha}\bar{\partial}_{\beta}\partial_{t}v_{i}\bar{\partial}_{\alpha}\bar{\partial}_{\beta}v_{i}dy+\int_{B^{+}}\varsigma^{2}\bar{\partial}_{\alpha}\bar{\partial}_{\beta}\partial_{j}(A_{ji}q)\bar{\partial}_{\alpha}\bar{\partial}_{\beta}v_{i}dy \nonumber\\
		&-\int_{B^{+}}\varsigma^{2}\bar{\partial}_{\alpha}\bar{\partial}_{\beta}\partial_{j}(A_{jr}A_{kr}\partial_{k}v_{i})\bar{\partial}_{\alpha}\bar{\partial}_{\beta}v_{i}dy=\int_{B^{+}}\varsigma^{2}\bar{\partial}_{\alpha}\bar{\partial}_{\beta}(\sum_{j=1}^3((F_0)_j\cdot\nabla)^2\eta_i)\bar{\partial}_{\alpha}\bar{\partial}_{\beta}v_{i}dy.
	\end{align}
	
	By applying integration by parts and the divergence theorem, equation \eqref{4.12} can be rewritten as:
	\begin{align}\label{4.13}	&\frac{1}{2}\frac{d}{dt}\|\varsigma\bar{\partial}^{2}v(t,\cdot)\|_{L^2(B^+)}^{2}+\|\varsigma\bar{\partial}^{2}\nabla v(t,\cdot)\|_{L^2(B^+)}^{2}\nonumber \\
		=&-\int_{B^{+}}\bar{\partial}_{\alpha}\bar{\partial}_{\beta}(A_{ji}q)\partial_{j}(\varsigma^{2}\bar{\partial}_{\alpha}\bar{\partial}_{\beta}v_{i})dy+\int_{\partial B^{+}}\mathcal{N}_{j}\bar{\partial}_{\alpha}\bar{\partial}_{\beta}(A_{ji}q)\varsigma^{2}\bar{\partial}_{\alpha}\bar{\partial}_{\beta}v_{i}dS \nonumber\\
		&-\int_{B^{+}}\bar{\partial}_{\alpha}\bar{\partial}_{\beta}((A_{jr}A_{kr}-\delta_{jk})\partial_{k}v_{i})(\varsigma^{2}\bar{\partial}_{\alpha}\bar{\partial}_{\beta}\partial_{j}v_{i})dy\nonumber\\
		&+2\int_{B^{+}}\bar{\partial}_{\alpha}\bar{\partial}_{\beta}(A_{jr}A_{kr}\partial_{k}v_{i})\varsigma\partial_{j}\varsigma\bar{\partial}_{\alpha}\bar{\partial}_{\beta}v_{i}dy \nonumber\\
		&-\int_{\partial B^{+}}\mathcal{N}_{j}\bar{\partial}_{\alpha}\bar{\partial}_{\beta}(A_{jr}A_{kr}\partial_{k}v_{i})\varsigma^{2}\bar{\partial}_{\alpha}\bar{\partial}_{\beta}v_{i}dS\nonumber\\
		&+\int_{B^{+}}\bar{\partial}_{\alpha}\bar{\partial}_{\beta}(\sum_{j=1}^3((F_0)_j\cdot\nabla)^2\eta_i)(\varsigma^{2}\bar{\partial}_{\alpha}\bar{\partial}_{\beta}v_{i})dy.
	\end{align}
	
	Since $A_{ik}\mathbb{F}_{kj}=(F_0)_{ij} $ and div-free condition $\partial_{k}(F_{0})_{kj}=0$,
	$$
	\begin{aligned}
		& \int_{B^{+}}\bar{\partial}_\alpha \bar{\partial}_\beta\left(\sum _ { j = 1 } ^ { 3 } \left(F_0\right)_{k j} \partial_k\left(\left(F_0\right)_{m j} \partial_m \eta_i\right)\right)\left(\zeta^2 \bar{\partial}_\alpha \bar{\partial}_\beta v_i\right) d y \\
		& =-\int_{B^{+}}\bar{\partial}_\alpha \bar{\partial}_\beta\left(\sum _ { j = 1 } ^ { 3 } \left(\left(F_0\right)_{k j} \left(F_0\right)_{m j} \partial_m \eta_i\right)\right)\partial_k\left(\zeta^2 \bar{\partial}_\alpha \bar{\partial}_\beta v_i \right)d y\\
		&+\int_{B^{+}}\bar{\partial}_\alpha \bar{\partial}_\beta\partial_k\left(\sum _ { j = 1 } ^ { 3 } \left(\left(F_0\right)_{k j} \left(F_0\right)_{m j} \partial_m \eta_i\right)\left(\zeta^2 \bar{\partial}_\alpha \bar{\partial}_\beta v_i \right)\right)d y
		\\&=-\int_{B^{+}}\bar{\partial}_\alpha \bar{\partial}_\beta\left(\sum _ { j = 1 } ^ { 3 } \left(\left(F_0\right)_{k j} \left(F_0\right)_{m j} \partial_m \eta_i\right)\right)\partial_k\left(\zeta^2 \bar{\partial}_\alpha \bar{\partial}_\beta v_i \right)d y\\&+\int_{\partial{B}^{+}}\mathcal{N}_{k}\bar{\partial}_\alpha \bar{\partial}_\beta\left(\sum _ { j = 1 } ^ { 3 } \left(A_{kl}\mathbb{F}_{lj}\mathbb{F}_{ij} \right)\right)\left(\zeta^2 \bar{\partial}_\alpha \bar{\partial}_\beta v_i \right)dS.
	\end{aligned}
	$$
	
	Since $\partial B^{+}=B^{0}\cup(\partial B\cap\{y_{3}>0\})$, $\mathcal{N}=(0,0,-1)$ on $B^{0}$, and $\varsigma$ vanishing near $(\partial B\cap\{y_{3}>0\})$, together with the boundary condition \eqref{2.5} , we can show that:
	
	\begin{align*}
		&\int_{\partial B^{+}}\mathcal{N}_{j}(\bar{\partial}_{\alpha}\bar{\partial}_{\beta}(A_{ji}q)\varsigma^{2}\bar{\partial}_{\alpha}\bar{\partial}_{\beta}v_{i})dS-\int_{\partial B^{+}}\mathcal{N}_{j}(\bar{\partial}_{\alpha}\bar{\partial}_{\beta}(A_{jr}A_{kr}\partial_{k}v_{i})\varsigma^{2}\bar{\partial}_{\alpha}\bar{\partial}_{\beta}v_{i})dS \\
		&-\int_{\partial B^{+}}\mathcal{N}_{k}(\bar{\partial}_{\alpha}\bar{\partial}_{\beta}\left(\sum _ { j = 1 } ^ { 3 } \left(A_{kl}\mathbb{F}_{lj}\mathbb{F}_{ij} \right)\right)\varsigma^{2}\bar{\partial}_{\alpha}\bar{\partial}_{\beta}v_{i})dS \\
		=&\int_{B^{0}}\bar{\partial}_{\alpha}\bar{\partial}_{\beta}(A_{ji}\mathcal{N}_{j}q-A_{jr}\mathcal{N}_{j}A_{kr}\partial_{k}v_{i}-\left(\mathcal{N}_{k}\sum _ { j = 1 } ^ { 3 } \left(A_{kl}\mathbb{F}_{lj}\mathbb{F}_{ij} \right)\right))\varsigma^{2}\bar{\partial}_{\alpha}\bar{\partial}_{\beta}v_{i}dS \\
		=&\int_{B^{0}}\bar{\partial}_{\alpha}\bar{\partial}_{\beta}(q\delta_{ij}A_{kj}\mathcal{N}_{k}-A_{rj}\partial_{r}v_{i}A_{kj}\mathcal{N}_{k}-A_{jr}\mathcal{N}_{j}A_{kr}\partial_{k}v_{i}\\&-\left(\mathcal{N}_{k}\sum _ { j = 1 } ^ { 3 } \left(A_{kl}\mathbb{F}_{lj}\mathbb{F}_{ij} \right)\right))\varsigma^{2}\bar{\partial}_{\alpha}\bar{\partial}_{\beta}v_{i}dS \\
		=&\int_{B^{0}}\bar{\partial}_{\alpha}\bar{\partial}_{\beta}(A_{kj}\mathcal{N}_{k}A_{ri}\partial_{r}v_{j}-A_{kj}\mathcal{N}_{k}\delta_{ij})\varsigma^{2}\bar{\partial}_{\alpha}\bar{\partial}_{\beta}v_{i}dS \\
		=&\int_{B^{+}}\partial_{k}(\varsigma^{2}\bar{\partial}_{\alpha}\bar{\partial}_{\beta}(\mathcal{N}_{k}A_{kj}A_{ri}\partial_{r}v_{j})\bar{\partial}_{\alpha}\bar{\partial}_{\beta}v_{i})dy-\int_{B^{+}}\partial_{k}(\varsigma^{2}\bar{\partial}_{\alpha}\bar{\partial}_{\beta}A_{ki}\bar{\partial}_{\alpha}\bar{\partial}_{\beta}v_{i})dy.
	\end{align*}
	
	Notice that
	\begin{align}\label{4.16a}
		&\frac{1}{2}\frac{d}{dt}\|\varsigma\bar{\partial}^{2}((F_0)_j\cdot\nabla)
		\eta_{i})\|_{L^2(B^+)}^{2}\\&=\frac{1}{2}\frac{d}{dt}\int_{B^{+}}\varsigma^{2}\bar{\partial}_{\alpha}\bar{\partial}_{\beta}[
		(F_0)_{kj}\partial_k\eta_{i}]\bar{\partial}_{\alpha}\bar{\partial}_{\beta}[(F_0)_{mj}\partial_m\eta_{i}]dy\nonumber
		\\&=\int_{B^{+}}\varsigma^{2}\bar{\partial}_{\alpha}\bar{\partial}_{\beta}[
		(F_0)_{kj}\partial_k\eta_{i}]\bar{\partial}_{\alpha}\bar{\partial}_{\beta}[(F_0)_{mj}\partial_mv_{i}]dy\nonumber,
	\end{align}
	
	we get
	\begin{equation}\label{4.16b}
		\begin{aligned}
			&\int_{B^{+}}\bar{\partial}_\alpha \bar{\partial}_\beta\left( \sum _ { j = 1 } ^ { 3 }\left(\left(F_0\right)_{k j} \left(F_0\right)_{m j} \partial_m \eta_i\right)\right)\partial_k\left(\zeta^2 \bar{\partial}_\alpha \bar{\partial}_\beta v_i \right)d y\\&=\frac{1}{2}\frac{d}{dt}\|\varsigma\bar{\partial}^{2}(F_0)_j\cdot\nabla)
			\eta\|_{L^2(B^+)}^{2}+\text{Other terms},
		\end{aligned}
	\end{equation}
	which other terms  have similar structures and can yield similar results, such as:
	$$\begin{aligned}
		&
		\int_{B^{+}}\bar{\partial}_\alpha\left(F_0\right)_{k j} \bar{\partial}_\beta\left( \left( \left(F_0\right)_{m j} \partial_m \eta_i\right)\right)\partial_k\left(\varsigma^2 \bar{\partial}_\alpha \bar{\partial}_\beta v_i \right)d y, \\&\int_{B^{+}}\left(F_0\right)_{k j} \bar{\partial}_\alpha\bar{\partial}_\beta\left( \left( \left(F_0\right)_{m j} \partial_m \eta_i\right)\right)\partial_k\left(\varsigma^2 \bar{\partial}_\alpha \bar{\partial}_\beta v_i \right)d y,\\&\int_{B^{+}}\zeta^2\bar{\partial}_\beta\left(\left(F_0\right)_{k j}\right) \bar{\partial}_\alpha \bar{\partial}_\beta( \left(F_0\right)_{m j} \partial_m \eta_i)\partial_k\bar{\partial}_\alpha v_i d y, \cdots. \end{aligned}$$
	
	Next, we integrate \eqref{4.13} over the time interval [0, T] and by \eqref{4.16b}:
	
	\begin{align}\label{4.15}		&\frac{1}{2}\|\varsigma\bar{\partial}^{2}v(t,\cdot)\|_{L^2(B^+)}^{2}+\int_{0}^{t}\|\varsigma\bar{\partial}^{2}\nabla v(s,\cdot)\|_{L^2(B^+)}^{2}ds+\frac{1}{2}\|\varsigma\bar{\partial}^{2}(F_0)_j\cdot\nabla)
		\eta\|_{L^2(B^+)}^{2}\\ &\leq \mathcal{Q}(\mathcal{M}_{0})+\sum_{i=1}^{6}\mathcal{R}_{i},\nonumber
	\end{align}
	
	where $\mathcal{R}_{i}$ (for $i=1,2,\dots,7$) are given by
	
	\begin{align*}
		\mathcal{R}_{1}&=-\int_{0}^{t}\int_{B^{+}}\bar{\partial}_{\alpha}\bar{\partial}_{\beta}(A_{ji}q)\partial_{j}(\varsigma^{2}\bar{\partial}_{\alpha}\bar{\partial}_{\beta}v_{i})dyds, \\
		\mathcal{R}_{2}&=-\int_{0}^{t}\int_{B^{+}}\bar{\partial}_{\alpha}\bar{\partial}_{\beta}((A_{jr}A_{kr}-\delta_{jk})\partial_{k}v_{i})(\varsigma^{2}\bar{\partial}_{\alpha}\bar{\partial}_{\beta}\partial_{j}v_{i})dyds, \\
		\mathcal{R}_{3}&=2\int_{0}^{t}\int_{B^{+}}\bar{\partial}_{\alpha}\bar{\partial}_{\beta}(A_{jr}A_{kr}\partial_{k}v_{i})\varsigma\partial_{j}\varsigma\bar{\partial}_{\alpha}\bar{\partial}_{\beta}v_{i}dyds, \\
		\mathcal{R}_{4}&=\int_{0}^{t}\int_{B^{+}}\partial_{k}(\varsigma^{2}\bar{\partial}_{\alpha}\bar{\partial}_{\beta}(A_{kj}A_{ri}\partial_{r}v_{j})\bar{\partial}_{\alpha}\bar{\partial}_{\beta}v_{i})dyds, \\
		\mathcal{R}_{5}&=-\int_{0}^{t}\int_{B^{+}}\partial_{k}(\varsigma^{2}\bar{\partial}_{\alpha}\bar{\partial}_{\beta}A_{ki}\bar{\partial}_{\alpha}\bar{\partial}_{\beta}v_{i})dyds, \\
		\mathcal{R}_{6}&= \text{Some terms containing $F_{0}\cdot \nabla\eta$, $F_{0}$ and $v$ integrated in time over $[0,T]$}.
	\end{align*}
	
	By the definition of $A$, \eqref{deriv.A} and \eqref{4.4}, we can easily derive the following estimate:
	\begin{align}\label{4.16}
		|A|\le C|\nabla\eta\|\nabla\eta|, \quad
		|\partial A|\le C|\nabla\eta\|\nabla^{2}\eta|, \quad
		|\partial^{2}A|\le C(|\nabla\eta\|\nabla^{3}\eta|+|\nabla^{2}\eta|^{2}),
	\end{align}
	where $C$  is a constant independent of $\epsilon$.
	
	Additionally, for $k=2,3$, we have
	\begin{align}\label{4.17}
		\|\nabla^{k}\eta\|_{L^2(\Omega^{\epsilon})}
		\le\int_{0}^{t}\|\nabla^{k}v(s,x)\|_{L^2(\Omega^{\epsilon})}ds \le C\sqrt{t}\bigg(\int_{0}^{t}\|\nabla^{k}v(s,x)\|_{L^2(\Omega^{\epsilon})}^{2}ds\bigg)^{\frac{1}{2}}.
	\end{align}
	
	Using \eqref{4.16}, \eqref{4.17}, the Sobolev embedding theorem,  the Cauchy-Schwarz inequality, and Lemma \ref{lemB.3},  we obtain
	\begin{align*}
		|\mathcal{R}_{1}|\le&\int_{0}^{t}\int_{B^{+}}(|\bar{\partial}^{2}A\|q|+|\bar{\partial}A\|\bar{\partial}q|)(2|\xi\|\nabla\xi\|\bar{\partial}^{2}v|+|\xi|^{2}|\bar{\partial}^{2}\nabla v|)dyds \\
		&+\int_{0}^{t}\int_{B^{+}}2|A\|\bar{\partial}^{2}q\|\xi\|\nabla\xi\|\bar{\partial}^{2}v|dyds+C\int_{0}^{t}\int_{B^{+}}|A\|\nabla^{2}q\|\xi|^{2}|\nabla^{3}v|dyds \\
		\le&C\int_{0}^{t}\int_{B^{+}}(|\nabla\eta\|\nabla^{3}\eta\|q|+|\nabla^{2}\eta|^{2}|q|+|\nabla\eta\|\nabla^{2}\eta\|\bar{\partial}q|)(|\nabla^{2}v|+|\nabla^{3}v|)dyds \\
		&+C\int_{0}^{t}\int_{B^{+}}|\nabla\eta|^{2}|\bar{\partial}^{2}q\|\nabla^{2}v|dyds+\int_{0}^{t}\int_{B^{+}}|\nabla\eta|^{2}|\nabla^{2}q\|\nabla^{3}v|dyds\\
		\le &C\int_{0}^{t}\|\nabla^{3}\eta\|_{L^2(\Omega^{\epsilon})}\|q\|_{L^{\infty}(\Omega^{\epsilon})}\|\nabla^{2}v\|_{L^2(\Omega^{\epsilon})}ds\\
		&+C\int_{0}^{t}\|\nabla^{3}\eta\|_{L^2(\Omega^{\epsilon})}\|q\|_{L^{\infty}(\Omega^{\epsilon})}\|\nabla^{3}v\|_{L^2(\Omega^{\epsilon})}ds \\
		&+C\int_{0}^{t}\|\nabla^{2}\eta\|_{L^{6}(\Omega^{\epsilon})}^{2}\|q\|_{L^{6}(\Omega^{\epsilon})}\|\nabla^{2}v\|_{L^2(\Omega^{\epsilon})}ds\\
		&+C\int_{0}^{t}\|\nabla^{2}\eta\|_{L^{6}(\Omega^{\epsilon})}^{2}\|q\|_{L^{6}(\Omega^{\epsilon})}\|\nabla^{3}v\|_{L^2(\Omega^{\epsilon})}ds \\
		&+C\int_{0}^{t}\|\nabla^{2}\eta\|_{L^{4}(\Omega^{\epsilon})}\|\nabla q\|_{L^{4}(\Omega^{\epsilon})}\|\nabla^{2}v\|_{L^2(\Omega^{\epsilon})}ds\\
		&+C\int_{0}^{t}\|\nabla^{2}\eta\|_{L^{4}(\Omega^{\epsilon})}\|\nabla q\|_{L^{4}(\Omega^{\epsilon})}\|\nabla^{3}v\|_{L^2(\Omega^{\epsilon})}ds \\
		&+C\int_{0}^{t}\|\nabla^{2}q\|_{L^2(\Omega^{\epsilon})}\|\nabla^{2}v\|_{L^2(\Omega^{\epsilon})}ds\\
		&+C\int_{0}^{t}\|\nabla\eta\|_{L^{\infty}(\Omega^{\epsilon})}\|\nabla^{2}q\|_{L^2(\Omega^{\epsilon})}\|\nabla^{3}v\|_{L^2(\Omega^{\epsilon})}ds \\
		\le&C\sup_{t\in[0,T]}\|\nabla^{3}\eta\|_{L^2(\Omega^{\epsilon})}\|\nabla^{2}v\|_{L^2(\Omega^{\epsilon})}\sqrt{t}\bigg(\int_{0}^{t}\|q\|_{H^2(\Omega^{\epsilon})}^{2}ds\bigg)^{\frac{1}{2}} \\
		&+C\sup_{t\in[0,T]}\sqrt{t}\bigg(\int_{0}^{t}\|\nabla^{3}v\|_{L^2(\Omega^{\epsilon})}^{2}ds\bigg)^{\frac{1}{2}}\bigg(\int_{0}^{t}\|q\|_{H^2(\Omega^{\epsilon})}^{2}ds\bigg)^{\frac{1}{2}}\\
		&\cdot\bigg(\int_{0}^{t}\|\nabla^{3}v\|_{L^2(\Omega^{\epsilon})}^{2}ds\bigg)^{\frac{1}{2}} \\
		&+C\sup_{t\in[0,T]}\|\nabla^{2}\eta\|_{H^1(\Omega^{\epsilon})}^{2}\|\nabla^{2}v\|_{L^2(\Omega^{\epsilon})}\sqrt{t}\bigg(\int_{0}^{t}\|q\|_{H^1(\Omega^{\epsilon})}^{2}ds\bigg)^{\frac{1}{2}} \\
		&+C\sup_{t\in[0,T]}\sqrt{t}\bigg(\int_{0}^{t}\|\nabla^{2}v\|_{L^2(\Omega^{\epsilon})}^{2}ds\bigg)^{\frac{1}{2}}\|\nabla^{2}\eta\|_{H^1(\Omega^{\epsilon})}\bigg(\int_{0}^{t}\|q\|_{H^1(\Omega^{\epsilon})}^{2}ds\bigg)^{\frac{1}{2}}\\
		&\cdot\bigg(\int_{0}^{t}\|\nabla^{3}v\|_{L^2(\Omega^{\epsilon})}^{2}ds\bigg)^{\frac{1}{2}} \\
		&+C\sup_{t\in[0,T]}\|\nabla^{2}\eta\|_{H^1(\Omega^{\epsilon})}\|\nabla^{2}v\|_{L^2(\Omega^{\epsilon})}\sqrt{t}\bigg(\int_{0}^{t}\|\nabla q\|_{H^1(\Omega^{\epsilon})}^{2}ds\bigg)^{\frac{1}{2}} \\
		&+C\sup_{t\in[0,T]}\sqrt{t}\bigg(\int_{0}^{t}\|\nabla^{2}v\|_{H^1(\Omega^{\epsilon})}^{2}ds\bigg)^{\frac{1}{2}}\bigg(\int_{0}^{t}\|\nabla q\|_{H^1(\Omega^{\epsilon})}^{2}ds\bigg)^{\frac{1}{2}}\\
		&\cdot\bigg(\int_{0}^{t}\|\nabla^{3}v\|_{L^2(\Omega^{\epsilon})}^{2}ds\bigg)^{\frac{1}{2}} \\
		&+C\sup_{t\in[0,T]}\|\nabla^{2}v\|_{L^2(\Omega^{\epsilon})}\sqrt{t}\bigg(\int_{0}^{t}\|\nabla^{2}q\|_{L^2(\Omega^{\epsilon})}^{2}ds\bigg)^{\frac{1}{2}} \\
		&+C\sup_{t\in[0,T]}\sqrt{t}\bigg(\int_{0}^{t}\|\nabla v\|_{H^2(\Omega^{\epsilon})}^{2}ds\bigg)^{\frac{1}{2}}\bigg(\int_{0}^{t}\|\nabla^{2}q\|_{L^2(\Omega^{\epsilon})}^{2}ds\bigg)^{\frac{1}{2}}\\
		&\cdot\bigg(\int_{0}^{t}\|\nabla^{3}v\|_{L^2(\Omega^{\epsilon})}^{2}ds\bigg)^{\frac{1}{2}} \\
		\le&\delta\int_{0}^{t}\|q\|_{H^2(\Omega^{\epsilon})}^{2}ds+C_{\delta}T\big(\sup_{t\in[0,T]}\|\nabla^{3}\eta\|_{L^2(\Omega^{\epsilon})}\|\nabla^{2}v\|_{L^2(\Omega^{\epsilon})}\big)^{2} \\
		&+\delta\int_{0}^{t}\|q\|_{H^2(\Omega^{\epsilon})}^{2}ds+C_{\delta}T\big(\sup_{t\in[0,T]}\int_{0}^{t}\|\nabla^{3}v\|_{L^2(\Omega^{\epsilon})}^{2}ds\big)^{2} \\
		&+\delta\int_{0}^{t}\|q\|_{H^1(\Omega^{\epsilon})}^{2}ds+C_{\delta}T\big(\sup_{t\in[0,T]}\|\nabla^{2}\eta\|_{H^1(\Omega^{\epsilon})}^{2}\|\nabla^{2}v\|_{L^2(\Omega^{\epsilon})}\big)^{2} \\
		&+\delta\int_{0}^{t}\|q\|_{H^1(\Omega^{\epsilon})}^{2}ds+C_{\delta}T\big(\sup_{t\in[0,T]}(\int_{0}^{t}\|\nabla^{2}v\|_{H^1(\Omega^{\epsilon})}^{2}ds)\|\nabla^{2}\eta\|_{H^1(\Omega^{\epsilon})}\big)^{2} \\
		&+\delta\int_{0}^{t}\|\nabla q\|_{H^1(\Omega^{\epsilon})}^{2}ds+C_{\delta}T\big(\sup_{t\in[0,T]}\|\nabla^{2}\eta\|_{H^1(\Omega^{\epsilon})}\|\nabla^{2}v\|_{L^2(\Omega^{\epsilon})}\big)^{2} \\
		&+\delta\int_{0}^{t}\|\nabla q\|_{H^1(\Omega^{\epsilon})}^{2}ds+C_{\delta}T\big(\sup_{t\in[0,T]}(\int_{0}^{t}\|\nabla^{2}v\|_{H^1(\Omega^{\epsilon})}ds)\big)^{2} \\
		&+\delta\int_{0}^{t}\|\nabla^{2}q\|_{L^2(\Omega^{\epsilon})}ds+C_{\delta}T\big(\sup_{t\in[0,T]}\|\nabla^{2}v\|_{L^2(\Omega^{\epsilon})}\big)^{2} \\
		&+\delta\int_{0}^{t}\|\nabla^{2}q\|_{L^2(\Omega^{\epsilon})}ds+C_{\delta}T\big(\sup_{t\in[0,T]}\int_{0}^{t}\|\nabla v\|_{H^2(\Omega^{\epsilon})}ds\big)^{2}.
	\end{align*}
	
	Therefore, we obtain
	\begin{align}\label{1}
		|\mathcal{R}_{1}|\le\delta\sup_{t\in[0,T]}E^{\epsilon}(t)+C_{\delta}T\mathcal{P}(\sup_{t\in[0,T]}E^{\epsilon}(t)).
	\end{align}
	
	Next,  we consider $\mathcal{R}_{2}$,  for which we have
	\begin{align}\label{4.19}
		|\mathcal{R}_{2}|\le&\underbrace{\int_{0}^{t}\int_{B^{+}}|(A_{jr}A_{kr}-\delta_{jk})\bar{\partial}_{\alpha}\bar{\partial}_{\beta}\partial_{k}v_{i}\bar{\partial}_{\alpha}\bar{\partial}_{\beta}\partial_{j}v_{i}|dyds }_ {\mathcal{R}_{2a}}\nonumber\\
		&+\underbrace{\int_{0}^{t}\int_{B^{+}}|\bar{\partial}_{\alpha}\bar{\partial}_{\beta}(A_{jr}A_{kr}-\delta_{jk})\partial_{k}v_{i}\bar{\partial}_{\alpha}\bar{\partial}_{\beta}\partial_{j}v_{i}|dyds }_{\mathcal{R}_{2b}}\nonumber\\
		&\underbrace{+2\int_{0}^{t}\int_{B^{+}}|\bar{\partial}_{\alpha}(A_{jr}A_{kr}-\delta_{jk})\bar{\partial}_{\beta}\partial_{k}v_{i}\bar{\partial}_{\alpha}\bar{\partial}_{\beta}\partial_{j}v_{i}|dyds }_{\mathcal{R}_{2c}}.
	\end{align}
	By choosing $0<\theta<\delta$ and using \eqref{4.2}, we obtain
	$$
	|\mathcal{R}_{2a}|\le \int_{0}^{t}\|AA^\top -\mathrm{I}\|_{L^{\infty}(\Omega^{\epsilon})}\|\nabla^{3}v\|_{L^2(\Omega^{\epsilon})}^{2}ds 	\le\delta\sup_{t\in[0,T]}E^{\epsilon}(t).
	$$
	Next, we consider  $\mathcal{R}_{2b}$ and $\mathcal{R}_{2c}$ as given in \eqref{4.19}. By the Cauchy-Schwarz inequality, \eqref{4.16}, and \eqref{4.17}, for any $\delta>0$, we have
	\begin{align*}
		|\mathcal{R}_{2b}|\le&C\int_{0}^{t}\int_{B^{+}}\big(|\nabla\eta|^{2}(|\nabla\eta\|\nabla^{3}\eta|+|\nabla^{2}\eta|^{2})+|\nabla\eta|^{2}|\nabla^{2}\eta|^{2}\big)|\nabla v\|\nabla^{3}v|dyds \\
		\le&C\int_{0}^{t}\|\nabla^{3}\eta\|_{L^2(\Omega^{\epsilon})}\|\nabla v\|_{L^{\infty}(\Omega^{\epsilon})}\|\nabla^{3}v\|_{L^2(\Omega^{\epsilon})}ds\\
		& +C\int_{0}^{t}\|\nabla^{2}\eta\|_{L^{4}(\Omega^{\epsilon})}\|\nabla v\|_{L^{4}(\Omega^{\epsilon})}\|\nabla^{3}v\|_{L^2(\Omega^{\epsilon})}ds \\
		\le&\delta\sup_{t\in[0,T]}E^{\epsilon}(t)+C_{\delta}T\mathcal{P}\big(\sup_{t\in[0,T]}E^{\epsilon}(t)\big),
	\end{align*}
	and
	\begin{align*}
		|\mathcal{R}_{2c}|\le&C\int_{0}^{t}\int_{B^{+}}|\nabla\eta|^{3}|\nabla^{2}\eta\|\nabla^{2}v\|\nabla^{3}v|dyds \\
		\le&C\int_{0}^{t}\|\nabla^{2}\eta\|_{L^{4}(\Omega^{\epsilon})}\|\nabla^{2}v\|_{L^{4}(\Omega^{\epsilon})}\|\nabla^{3}v\|_{L^2(\Omega^{\epsilon})}ds \\
		\le&\delta\sup_{t\in[0,T]}E^{\epsilon}(t)+C_{\delta}T\mathcal{P}\big(\sup_{t\in[0,T]}E^{\epsilon}(t)\big).
	\end{align*}
	
	Thus,  by \eqref{4.19}, we obtain
	\begin{align}\label{2} |\mathcal{R}_{2}|\le\delta\sup_{t\in[0,T]}E^{\epsilon}(t)+C_{\delta}T\mathcal{P}\big(\sup_{t\in[0,T]}E^{\epsilon}(t)\big)
	\end{align}
	for any $\delta>0$.
	
	The integral $\mathcal{R}_{3}$ can also be split  into the following parts:
	\begin{align}\label{4.21}
		\mathcal{R}_{3}=&-2\int_{0}^{t}\int_{B^{+}}(\bar{\partial}_{\alpha}\bar{\partial}_{\beta}A_{jr}A_{kr}\partial_{k}v_{i}\varsigma\partial_{j}\varsigma\bar{\partial}_{\alpha}\bar{\partial}_{\beta}v_{i}+A_{jr}\bar{\partial}_{\alpha}\bar{\partial}_{\beta}A_{kr}\partial_{k}v_{i}\varsigma\partial_{j}\varsigma\bar{\partial}_{\alpha}\bar{\partial}_{\beta}v_{i})dyds \nonumber\\
		&-2\int_{0}^{t}\int_{B^{+}}A_{jr}A_{kr}\bar{\partial}_{\alpha}\bar{\partial}_{\beta}\partial_{k}v_{i}\varsigma\partial_{j}\varsigma\bar{\partial}_{\alpha}\bar{\partial}_{\beta}v_{i}dyds \nonumber\\
		&-2\int_{0}^{t}\int_{B^{+}}2\bar{\partial}_{\alpha}A_{jr}\bar{\partial}_{\beta}A_{kr}\partial_{k}v_{i}\varsigma\partial_{j}\varsigma\bar{\partial}_{\alpha}\bar{\partial}_{\beta}v_{i}dyds \nonumber\\
		&-2\int_{0}^{t}\int_{B^{+}}2(\bar{\partial}_{\alpha}A_{jr}A_{kr}\bar{\partial}_{\beta}\partial_{k}v_{i}\varsigma\partial_{j}\varsigma\bar{\partial}_{\alpha}\bar{\partial}_{\beta}v_{i}+A_{jr}\bar{\partial}_{\alpha}A_{kr}\bar{\partial}_{\beta}\partial_{k}v_{i}\varsigma\partial_{j}\varsigma\bar{\partial}_{\alpha}\bar{\partial}_{\beta}v_{i})dyds.
	\end{align}
	Using a similar argument as in the proof of \eqref{2}, we obtain
	\begin{align}\label{3}
		|\mathcal{R}_{3}|
		\le&C\int_{0}^{t}\int_{B^{+}}(|\nabla\eta\|\nabla^{3}\eta|+|\nabla^{2}\eta|^{2})|\nabla\eta|^{2}|\nabla v\|\varsigma\|\nabla\varsigma\|\nabla^{2}v|dyds\nonumber\\
		&+C\int_{0}^{t}\int_{B^{+}}|\nabla\eta|^{4}|\nabla^{3}v\|\varsigma\|\nabla\varsigma\|\nabla^{2}v|dyds\nonumber \\
		&+C\int_{0}^{t}\int_{B^{+}}|\nabla\eta|^{2}|\nabla^{2}\eta|^{2}|\nabla v\|\varsigma\|\nabla\varsigma\|\nabla^{2}v|dyds\nonumber \\
		&+C\int_{0}^{t}\int_{B^{+}}|\nabla\eta|^{3}|\nabla^{2}\eta\|\nabla^{2}v\|\varsigma\|\nabla\varsigma\|\nabla^{2}v|dyds \nonumber\\
		\le&\delta\sup_{t\in[0,T]}E^{\epsilon}(t)+C_{\delta}T\mathcal{P}\big(\sup_{t\in[0,T]}E^{\epsilon}(t)\big),
	\end{align}
	for any $\delta>0$.
	
	Next, we estimate several representative terms in $\mathcal{R}_{6}$. The other terms have similar structures and can yield similar results. By the Hölder inequality, we get
	$$
	\sum_{j=1}^3(\left\|\left(\left(F_0\right)_j \cdot \nabla\right) \eta\right\|_{H^2(\Omega^{\varepsilon})}
	\leq \left\|F_0\right\|_{H^3\left(\Omega^{\varepsilon}\right)}\|\eta\|_{H^3\left(\Omega^{\varepsilon}\right)} .
	$$
	By the Cauchy-Schwarz inequality
	$$
	\begin{aligned}
		& \left|\int_0^t \int_{B^{+}} \bar{\partial}_\alpha\left(F_0\right)_{k j} \bar{\partial}_\beta\left(\left(\left(F_0\right)_{m j} \partial_m \eta_i\right)\right) \partial_k\left(\zeta^2 \bar{\partial}_\alpha \bar{\partial}_\beta v_i\right) d y\right| \\
		\leq & C \int_0^t \int_{B^{+}}\left|\nabla F_0\right||\nabla ((F_{0})_{j}\cdot \nabla\eta_i)| \left(2|\varsigma||\nabla \varsigma|\left|\nabla^2 v\right|+|\varsigma|^2\left|\nabla^3 v\right|\right) d y d s \\
		\leq & C \int_0^t\left\|\nabla F_0\right\|_{L^4\left(\Omega^\epsilon\right)}\|\nabla ((F_{0})_{j} \cdot \nabla\eta_i)\|_{L^4\left(\Omega^\epsilon\right)}\left(\left\|\nabla^2 v\right\|_{L^2\left(\Omega^\epsilon\right)}+\left\|\nabla^3 v\right\|_{L^2\left(\Omega^\epsilon\right)}\right) d s \\
		\leq & \mathcal{Q}\left(\mathcal{M}_0\right)+\delta \sup _{t \in[0, T]} E^\epsilon(t)+C_\delta T \mathcal{P}\left(\sup _{t \in[0, T]} E^\epsilon(t)\right).
	\end{aligned}
	$$
	By the Hölder inequality, we also have, for any $\delta > 0$, that
	$$
	\begin{aligned}
		&\left|\int_{B^{+}}\left(F_0\right)_{k j} \bar{\partial}_\alpha \bar{\partial}_\beta\left(\left(\left(F_0\right)_{m j} \partial_m \eta_i\right)\right) \partial_k\left(\varsigma^2 \bar{\partial}_\alpha \bar{\partial}_\beta v_i\right) d y\right| \\
		& \leq C \int_0^t \int_{B^{+}}\left|F_0\right|\left|\nabla^2\left((F_{0})_{j} \cdot \nabla \eta\right)\right|\left|\left(2|\varsigma||\nabla \varsigma|\left|\nabla^2 v\right|+|\varsigma|^2\left|\nabla^3 v\right|\right)\right| d y d s \\
		& \leq C \int_0^t\left\|F_0\right\|_{L^{\infty}\left(\Omega^\epsilon\right)}\left\|\nabla^{2}((F_{0})_{j}\cdot \nabla \eta)\right\|_{L^2\left(\Omega^\epsilon\right)}\left\|\left(2|\varsigma||\nabla \varsigma|\left|\nabla^2 v\right|+|\varsigma|^2\left|\nabla^3 v\right|\right)\right\|_{L^2\left(\Omega^\epsilon\right)} d s \\
		& \leq C \sup _{t \in[0, T]}\left\|F_0\right\|_{H^2\left(\Omega^\epsilon\right)}^2\left\|(F_{0})_{j} \cdot \nabla \eta\right\|_{H^2\left(\Omega^\epsilon\right)}^2 \sqrt{t}\left(\int_0^t\left\|\nabla^3 v\right\|_{L^2\left(\Omega^\epsilon\right)}^2 d s\right)^{\frac{1}{2}} \\
		& \leq \mathcal{Q}\left(\mathcal{M}_0\right)+ \delta \sup _{t \in[0, T]} E^\epsilon(t)+C_\delta T \mathcal{P}\left(\sup _{t \in[0, T]} E^\epsilon(t)\right),
	\end{aligned}
	$$
	and
	$$
	\begin{aligned}
		&\left|\int_{B^{+}}\zeta^2\bar{\partial}_\beta\left(\left(F_0\right)_{k j}\right) \bar{\partial}_\alpha \bar{\partial}_\beta( \left(F_0\right)_{m j} \partial_m \eta_i)\partial_k\bar{\partial}_\alpha v_i d y\right| \\
		\leq & C\int_{0}^{t}\int_{B^{+}}|\varsigma|^{2}|\nabla F_{0}||\nabla^{2}v\|\nabla^{2}((F_0)_{j}\cdot \nabla \eta)|dyds \\
		\leq&C\int_{0}^{t}\|\nabla F_{0}\|_{L^{4}(\Omega^{\epsilon})}\|\nabla^{2}v\|_{L^{4}(\Omega^{\epsilon})}\|\nabla^{2}((F_0)_{j})\cdot \nabla \eta))\|_{L^2(\Omega^{\epsilon})}ds \\
		\le&C\sup_{t\in[0,T]}\|F_{0}\|_{H^2(\Omega^{\epsilon})}\|(F_0)_{j})\cdot \nabla \eta\|_{H^2(\Omega^{\epsilon})}\sqrt{t}\bigg(\int_{0}^{t}\|\nabla^{2}v\|_{H^1(\Omega^{\epsilon})}^{2}ds\bigg)^{\frac{1}{2}} \\	\leq&\mathcal{Q}\left(\mathcal{M}_0\right)+ \delta \sup _{t \in[0, T]} E^\epsilon(t)+C_\delta T \mathcal{P}\left(\sup _{t \in[0, T]} E^\epsilon(t)\right).
	\end{aligned}
	$$
	Therefore, Since the other terms in $\mathcal{R}_{5}$ have the same structurecombining with the inequality mentioned above, we have
	\begin{align}\label{4}		|\mathcal{R}_{6}|\le\mathcal{Q}\left(\mathcal{M}_0\right)+\delta\sup_{t\in[0,T]}E^{\epsilon}(t)+C_{\delta}T\mathcal{P}\big(\sup_{t\in[0,T]}E^{\epsilon}(t)\big),
	\end{align}
	for any $\delta>0$.

	Then, we consider $\mathcal{R}_{4}$ and $\mathcal{R}_{5}$. For $\mathcal{R}_{4}$, we have calculated that in Lagrangian coordinates, $\Delta u\circ\eta=\Delta^{\eta}v=\mathrm{div}^{\eta}\sym^{\eta}v=\partial_{j}(A_{jr}A_{kr}\partial_{k}v)$, so we can replace the term by "div"-term. As a result, using integrating by parts we can find $\mathcal{R}_{4}$ term vanish with the boundary condition.
	\\
	For $\mathcal{R}_{5}$, notice
	\begin{align*}
		\mathcal{R}_{5}=&-\int_{0}^{t}\int_{B^{+}}2\varsigma\partial_{k}\xi\bar{\partial}_{\alpha}\bar{\partial}_{\beta}A_{ki}\bar{\partial}_{\alpha}\bar{\partial}_{\beta}v_{i}dyds-\int_{0}^{t}\int_{B^{+}}\varsigma^{2}\bar{\partial}_{\alpha}\bar{\partial}_{\beta}A_{ki}\partial_{k}\bar{\partial}_{\alpha}\bar{\partial}_{\beta}v_{i}dyds, \\
		=:&\mathcal{R}_{5a}+\mathcal{R}_{5b}
	\end{align*}
	and the terms can be estimated below:
	\begin{align*}
		\mathcal{R}_{5a}\le&\int_{0}^{t}\int_{B{+}}2|\varsigma\|\nabla\varsigma\|\nabla^{2}A\|\nabla^{2}v|dyds \\
		\le&C\int_{0}^{t}\|\varsigma\|_{L^{\infty}(\Omega^{\epsilon})}\|\nabla\varsigma\|_{L^{\infty}(\Omega^{\epsilon})}\|\nabla\eta\|_{L^{\infty}(\Omega^{\epsilon})}\|\nabla^{3}\eta\|_{L^{2}(\Omega^{\epsilon})}\|\nabla^{2}v\|_{L^{2}}(\Omega^{\epsilon})ds \\
		&+C\int_{0}^{t}\|\varsigma\|_{L^{\infty}(\Omega^{\epsilon})}\|\nabla\varsigma\|_{L^{\infty}(\Omega^{\epsilon})}\|\nabla^{2}\eta\|_{L^{4}(\Omega^{\epsilon})}^{2}\|\nabla^{2}v\|_{L^{2}(\Omega^{\epsilon})}ds \\
		\le&C\sup_{t\in[0,T]}\|\nabla^{3}\eta\|_{L^{2}(\Omega^{\epsilon})}\sqrt{t}(\int_{0}^{t}\|\nabla^{2}v\|_{L^{2}(\Omega^{\epsilon})}^{2}ds)^{\frac{1}{2}} \\
		&+C\sup_{t\in[0,T]}\|\nabla^{2}\eta\|_{H^1(\Omega^{\epsilon})}^{2}\sqrt{t}(\int_{0}^{t}\|\nabla^{2}v\|_{L^{2}(\Omega^{\epsilon})}^{2}ds)^{\frac{1}{2}} \\
		\le&\delta\sup_{t\in[0,T]}E^{\epsilon}(t)+C_{\delta}T\mathcal{P}\big(\sup_{t\in[0,T]}E^{\epsilon}(t)\big),\\
		\mathcal{R}_{5b}\le&\int_{0}^{t}\int_{B^{+}}|\varsigma|^{2}|\nabla^{2}A\|\nabla^{3}v|dyds \\
		\le&C\int_{0}^{t}\|\varsigma\|_{L^{\infty}(\Omega^{\epsilon})}^{2}\|\nabla\eta\|_{L^{\infty}(\Omega^{\epsilon})}\|\nabla^{3}\eta\|_{L^{2}(\Omega^{\epsilon})}\|\nabla^{3}v\|_{L^{2}(\Omega^{\epsilon})}ds \\
		&+C\int_{0}^{t}\|\varsigma\|_{L^{\infty}(\Omega^{\epsilon})}^{2}\|\nabla^{2}\eta\|_{L^{4}(\Omega^{\epsilon})}^{2}\|\nabla^{2}v\|_{L^{2}(\Omega^{\epsilon})}ds \\
		\le&\delta\sup_{t\in[0,T]}E^{\epsilon}(t)+C_{\delta}T\mathcal{P}\big(\sup_{t\in[0,T]}E^{\epsilon}(t)\big),
	\end{align*}
	yielding
	\begin{align}\label{6}
		|\mathcal{R}_{5}|\le\delta\sup_{t\in[0,T]}E^{\epsilon}(t)+C_{\delta}T\mathcal{P}\big(\sup_{t\in[0,T]}E^{\epsilon}(t)\big).
	\end{align}
	
	Therefore, by summing over all boundary charts \( l \) in \eqref{4.15}, and applying \eqref{1}, \eqref{2}, \eqref{3}, \eqref{4}, \eqref{6}, and the trace theorem, we obtain
	\begin{align}\label{4.28}
		\int_{0}^{T}\|v(t,\cdot)\|_{2.5,\partial\Omega^{\epsilon}}^{2}dt\le \mathcal{Q}\left(\mathcal{M}_0\right)+ \delta \sup _{t \in[0, T]} E^\epsilon(t)+C_\delta T \mathcal{P}\left(\sup _{t \in[0, T]} E^\epsilon(t)\right).
	\end{align}
	This completes the proof of Proposition \ref{prop4.2}.
\end{proof}

\subsection{Time-differentiated system estimates}

Differentiating system \eqref{2.3} with respect to time yields:

\begin{equation}\label{4.32}\begin{cases}
		\partial_{t}\eta_i=v_i,  &\text{in} \ [0,T]\times\Omega^{\epsilon}, \\
		\partial_{t}^{2}v_{i}+\peta_{i}\partial_{t}q-\divop^\eta \partial_t \symeta v_{i} -\partial_t(\sum_{j=1}^3(F_0)_j\cdot\nabla)^2\eta_i)\\
		\qquad\quad=\peta_i v_l\peta_lq-\peta_j v_l \peta_l(\symeta v)_{ij}& \text{in} \ [0,T]\times\Omega^{\epsilon},  \\
		\divop^\eta \partial_{t}v=\peta_i v_l\peta_lv_{i},  & \text{in} \ [0,T]\times\Omega^{\epsilon}, \\
		\partial_{t}[(((\symeta v)_{ij}-q\delta_{ij})+(\mathbb{F}_{im}\mathbb{F}_{jm}-\delta_{ij}))n_j]=0, & \text{on} \ [0,T]\times\partial\Omega^{\epsilon}, \\
		(\eta,v,\partial_{t}v,\partial_{t}((F_{0})_{j}\cdot\nabla \eta))=(e,u_{0},u_{1}, F_{1j}) &\text{in} \ \{t=0\}\times\Omega^{\epsilon},
	\end{cases}
\end{equation}
where initial accelerations are defined as:
\begin{align*}
	&u_{1i} = \Delta u_{0i} - \partial_i p_0 + \partial_j (F_{0im} F_{0jm}), \\
	&F_{1j}=(F_{0})_{j}\cdot\nabla v(0,\cdot),
\end{align*}
satisfying the estimate:
\begin{equation}\label{4.39a}
	\begin{aligned}
		\|u_1\|_{L^2(\Omega^\epsilon)} + \sum_{j=1}^{3}\|F_{1j}\|_{L^2(\Omega^\epsilon)} \leq C P(\mathcal{M}_0),
	\end{aligned}
\end{equation}
with $P(\mathcal{M}_0)$ being an $\epsilon$-independent polynomial. Therefore, we have:
\begin{proposition}\label{prop4.4}
	The time derivatives satisfy the following uniform estimate:
	
	\begin{align}\label{4.34A}
		\begin{split}
			\sup_{t\in[0,T]}  \|v_t(t)\|_{L^2(\Omega^\epsilon)}^2   & + \int_0^T \|\nabla v_t\|_{L^2(\Omega^\epsilon)}^2 dt \\
			\leq \mathcal{Q}(\mathcal{M}_0) & + \delta \sup_{t\in[0,T]} E^\epsilon(t) + C_\delta T^{\frac{1}{2}} \mathcal{P}\left( \sup_{t\in[0,T]} E^\epsilon(t) \right)
		\end{split}
	\end{align}
	
	for any $\delta > 0$, where $\mathcal{Q}(\mathcal{M}_0)$ is an $\epsilon$-independent polynomial in the initial energy.
\end{proposition}

\begin{proof}
	We consider test functions $\phi \in \mathcal{V}(t) := \{ \phi \in H^1(\Omega^\epsilon; \mathbb{R}^3) : \divop^\eta \phi = A_{ji} \partial_j \phi_i = 0 \}$. Taking the $L^2$ inner product of \eqref{4.32} with $\phi_i$ gives
	\begin{align*}
		\int_{\Omega^\epsilon} \partial_t^2 v_i \phi_i  dx
		&+ \int_{\Omega^\epsilon} \partial_i^\eta \partial_t q  \phi_i  dx
		- \int_{\Omega^\epsilon} (\operatorname{div}^\eta \partial_t \sym^\eta v_i) \phi_i  dx \\
		&- \int_{\Omega^\epsilon} \partial_t(\sum_{j=1}^3((F_0)_j\cdot\nabla)^2\eta_i) \phi_i  dx \\
		= \int_{\Omega^\epsilon} & \partial_i^\eta v_l \partial_l^\eta q  \phi_i  dx
		- \int_{\Omega^\epsilon} \partial_j^\eta v_l \partial_l^\eta (\sym^\eta v)_{ij} \phi_i  dx.
	\end{align*}
	Applying integration by parts to key terms, we have
	\begin{align*}
		\int_{\Omega^\epsilon} \partial_i^\eta \partial_t q  \phi_i  dx
		&= \int_{\partial \Omega^\epsilon} n_i \partial_t q  \phi_i  dS, \\
		\int_{\Omega^\epsilon} (\operatorname{div}^\eta \partial_t \sym^\eta v_i) \phi_i  dx
		&= \int_{\partial \Omega^\epsilon} n_j \partial_t (\sym^\eta v)_{ij} \phi_i  dS
		- \int_{\Omega^\epsilon} \partial_t (\sym^\eta v)_{ij} \partial_j^\eta \phi_i  dx
	\end{align*}
	and
	\begin{align*}
		&\int_{\Omega^{\varepsilon}}\partial_ t\left(\sum_{j=1}^3\left[\left(F_0\right)_{m j} \partial_ m\left(F_0\right)_{k j} \partial_k \eta_ i\right]\right) \phi_i d x
		\\&=\int_{\Omega^{\varepsilon}} \partial_t\left(\sum_{j=1}^3\partial_m\left[\left(F_0\right)_{m j}\left(F_0\right)_{k j} \partial_k \eta_ i\right] \right)\phi_i d x
		\\&=\sum_{j=1}^3\int_{\Omega^{\varepsilon}}\partial_t\partial_m\left(A_{m k} \mathbb{F}_{k j} A_{k h}  \mathbb{F}_{h j} \partial_k \eta_i\right) \phi_idx
		\\&=\sum_{j=1}^3\left(-\int_{\Omega^{\varepsilon}}\partial_k^{\eta}\left( \partial_t(\mathbb{F}_{k j} \mathbb{F}_{i j}) \phi_i\right)dx+\int_{\Omega^\epsilon} \partial_j^\eta v_l \partial_l^\eta (\mathbb{F}_{k j} \mathbb{F}_{i j}) \phi_i dx+\int_{\Omega^\epsilon} \partial_t (\mathbb{F}_{k j} \mathbb{F}_{i j}) \partial_k^\eta \phi_idx\right)
		\\&= \sum_{j=1}^3\left(-\int_{\partial \Omega^\epsilon} n_k \partial_t (\mathbb{F}_{kj} \mathbb{F}_{ij}) \phi_i dS+\int_{\Omega^\epsilon} \partial_k^\eta v_l \partial_l^\eta ((F_{0})_{j}\cdot\nabla\eta_{k}(F_{0})_{j}\cdot\nabla\eta_{i}) \phi_i dx\right.\\&\left.+ \int_{\Omega^\epsilon} \partial_t ((F_{0})_{j}\cdot\nabla\eta_{k}(F_{0})_{j}\cdot\nabla\eta_{i}) \partial_k^\eta \phi_i dx\right).
	\end{align*}
	Due to the boundary condition in \eqref{4.32} and the fact that $n$ is dependent of $t$, we obtain
	\begin{align*}
		&-\int_{\partial \Omega^{\epsilon}}n_i\partial_{t}q\phi_{i}dS+\int_{\partial\Omega^{\epsilon}}n_k\partial_t(\symeta v)_{ik}\phi_{i}dS+\int_{\partial\Omega^{\epsilon}} \sum_{j=1}^3n_{k}\partial_{t}(\mathbb{F}_{kj}\mathbb{F}_{ij})\phi_{i}\, dS\\
		=&\int_{\partial \Omega^{\epsilon}}\partial_t[(((\symeta v)_{ik}-q\delta_{ik})+(\sum_{j=1}^3\mathbb{F}_{kj}\mathbb{F}_{ij}-\delta_{ki}))n_k]\phi_{i}\, dS
		\\ & -\int_{\partial \Omega^{\epsilon}} \partial_t n_k\left[\left(\left(\symeta v\right)_{i k}-q
		\delta_{i k}\right)+\left(\sum_{j=1}^3\mathbb{F}_{kj} \mathbb{F}_{ij}-\delta_{i j}\right)\right] \phi_i dS
		\\ =&-\int_{\partial \Omega^{\epsilon}} \partial_t n_k\left[\left(\left(\symeta v\right)_{i k}-q
		\delta_{i k}\right)+\sum_{j=1}^3\left(\mathbb{F}_{kj} \mathbb{F}_{ij}-\delta_{k i}\right)\right] \phi_i dS.
	\end{align*}
	
	Combining these results yields the weak formulation
	\begin{align}\label{4.42a}
		\int_{\Omega^{\epsilon}}\partial_{t}^{2}v_{i}\phi_{i}\, dx
		=&\int_{\Omega^{\epsilon}}\peta_i v_l\peta_lq \phi_{i}\, dx
		-\int_{\Omega^{\epsilon}}\peta_j v_l \peta_l(\symeta v)_{ij}\phi_{i}\, dx\\
		&- \int_{\Omega^\epsilon} \partial_k^\eta v_l \partial_l^\eta ((F_{0})_{j}\cdot\nabla\eta_{k}(F_{0})_{j}\cdot\nabla\eta_{i}) \phi_i\, dx-\int_{\Omega^{\epsilon}} \partial_t(\symeta v)_{ij}\peta_j\phi_{i}\, dx\nonumber\\
		&-\int_{\Omega^\epsilon} \partial_t ((F_{0})_{j}\cdot\nabla\eta_{k}(F_{0})_{j}\cdot\nabla\eta_{i}) \partial_k^\eta \phi_i \, dx\nonumber\\
		&-\int_{\partial \Omega^{\epsilon}} \partial_t n_k\left[\left(\left(\symeta v\right)_{i k}-q
		\delta_{i k}\right)+\sum_{j=1}^3\left(\mathbb{F}_{kj} \mathbb{F}_{ij}-\delta_{k i}\right)\right] \phi_i dx.\nonumber
	\end{align}
	Inspired by \cite{CS5}, we define a vector field $w$ satisfying
	\begin{equation}\label{4.36}\begin{cases}
			\divop^\eta w=\peta_i v_l\peta_lv_{i} & \text{in }  \Omega^{\epsilon},  \\
			w_i=\varphi(t) n_i &\text{on }  \partial\Omega^{\epsilon},
		\end{cases}
	\end{equation}
	where $\varphi(t)$ is determined by the compatibility condition:
	$$\varphi(t):=\frac{1}{|\partial\Omega^{\epsilon}|}\int_{\Omega^{\epsilon}}\peta_i v_l\peta_lv_{i}\, dx.$$
	This ensures
	\begin{align*}
		\int_{\Omega^\epsilon} \divop^\eta w  dx = \int_{\partial\Omega^\epsilon} w \cdot n  dS = \varphi(t) |\partial\Omega^\epsilon|.
	\end{align*}
	
	Since $0 < C_0 \leq |A| \leq C$ (Lemma \ref{lem4.1}), we can express $w = \nabla^\eta \psi$ where $\psi$ solves the Neumann problem:
	\begin{align*}
		\begin{cases}
			\Delta^\eta \psi = \partial_i^\eta v_l \partial_l^\eta v_i \quad &\text{in } \Omega^\epsilon, \\
			\partial_n^\eta \psi = \varphi(t) \quad &\text{on } \partial\Omega^\epsilon.
		\end{cases}
	\end{align*}
	By elliptic regularity \cite{Girault1986,CS2,CS5}, for $k=1,2$, we have
	\begin{align}\label{4.37}
		\|w(t)\|_{H^k(\Omega^\epsilon)} \leq C \left( \|\partial_i^\eta v_l \partial_l^\eta v_i\|_{H^{k-1}(\Omega^\epsilon)} + \|\varphi(t)\|_{H^{k-\frac{3}{2}}(\partial\Omega^\epsilon)} \right),
	\end{align}
	with $C > 0$ independent of $\epsilon$.
	
	
	Since $\varphi(t)$ is constant on $\partial\Omega^\epsilon$, its $H^{-\frac{1}{2}}$ norm simplifies to
	\[
	\|\varphi(t)\|_{H^{-\frac{1}{2}}(\partial\Omega^\epsilon)}^2 \leq C |\varphi(t)|^2,
	\]
	where $C$ depends on $|\partial\Omega^\epsilon|$. Expanding $|\varphi(t)|^2$, we get
	\begin{align*}
		|\varphi(t)|^2 &= \frac{1}{|\partial\Omega^\epsilon|^2} \left( \int_{\Omega^\epsilon} \partial_i^\eta v_l \partial_l^\eta v_i  dx \right)^2 \\
		&\leq C \left( \int_{\Omega^\epsilon} \left[ \partial_i^\eta v_l(0) + \int_0^t \partial_s (\partial_i^\eta v_l(s))  ds \right] \left[ \partial_l^\eta v_i(0) + \int_0^t \partial_s (\partial_l^\eta v_i(s))  ds \right] dx \right)^2.
	\end{align*}
	
	By Lemma \ref{lem:commutators}, we have
	\begin{align*}
		\peta_i v_l(0)&=\partial_iv_l(0), \quad \peta_l v_i(0)=\partial_l v_i(0), \\
		\partial_s (\partial_i^\eta v_l(s)) &= [\partial_s, \partial_i^\eta] v_l + \partial_i^\eta \partial_s v_l = -\partial_i^\eta v_k \partial_k^\eta v_l + \partial_i^\eta \partial_s v_l, \\
		\partial_s (\partial_l^\eta v_i(s)) &= [\partial_s, \partial_l^\eta] v_i + \partial_l^\eta \partial_s v_i = -\partial_l^\eta v_m \partial_m^\eta v_i + \partial_l^\eta \partial_s v_i.
	\end{align*}
	
	We estimate a representative term using H\"older's inequality and Sobolev embedding $H^1 \subset L^4$
	\begin{align*}
		& \int_{\Omega^\epsilon} \partial_i^\eta v_l(0) \left( \int_0^t \partial_s (\partial_l^\eta v_i(s))  ds \right) dx \\
		&\quad \leq \left\| \partial_i^\eta v_l(0) \right\|_{L^2(\Omega^{\epsilon})} \left\| \int_0^t \left( -\partial_l^\eta v_m \partial_m^\eta v_i + \partial_l^\eta \partial_s v_i \right) ds \right\|_{L^2(\Omega^{\epsilon})} \\
		&\quad \leq \mathcal{M}_0^{1/2} \left( \int_0^t \| \nabla v_m \|_{L^4(\Omega^{\epsilon})} \| \nabla v_i \|_{L^4(\Omega^{\epsilon})} ds + \int_0^t \| \nabla v_s \|_{L^2(\Omega^{\epsilon})} ds \right) \\
		&\quad \leq \mathcal{M}_0^{1/2} \left( \int_0^t \| \nabla v \|_{H^1(\Omega^{\epsilon})}^2 ds + \int_0^t \| \nabla v_s \|_{L^2(\Omega^{\epsilon})} ds \right) \\
		&\quad \leq \mathcal{M}_0^{1/2} \left( T^{1/2} \left( \int_0^t \|\nabla v \|_{H^1(\Omega^{\epsilon})}^4 ds \right)^{1/2} + T^{1/2} \left( \int_0^t \| \nabla v_s \|_{L^2(\Omega^{\epsilon})}^2 ds \right)^{1/2} \right) \\
		&\quad \leq \mathcal{M}_0 +T \mathcal{P} \left( \sup_{t \in [0,T]} E^\epsilon(t) \right).
	\end{align*}
	Combining all estimates with elliptic regularity \eqref{4.37},  we have
	\begin{align}\label{4.38}
		\sup_{t\in[0,T]}\|w(t,\cdot)\|_{H^1(\Omega^{\epsilon})}^{2}+\int_{0}^{T}\|w(s,\cdot)\|_{H^2(\Omega^{\epsilon})}^{2}ds\le\mathcal{M}_{0}+T^{\frac{1}{2}}\mathcal{P}\big(\sup_{t\in[0,T]}E^{\epsilon}(t)\big).
	\end{align}
	
	Similarly, the time derivative $w_t$ satisfies
	\begin{align}\label{4.39}
		\begin{cases}
			\operatorname{div}^\eta w_t = \partial_i^\eta v_l \partial_l^\eta w_i + \partial_t (\partial_i^\eta v_l \partial_l^\eta v_i) & \text{in } \Omega^\epsilon, \\
			w_t = \varphi'(t) n + \varphi(t) \partial_t n & \text{on } \partial\Omega^\epsilon.
		\end{cases}
	\end{align}
	Since $n \cdot n = 1$, we have $n \cdot \partial_t n = 0$, implying
	\begin{align}\label{4.40}
		w_t \cdot n = \varphi'(t) \quad \text{on } \partial\Omega^\epsilon.
	\end{align}
	Elliptic regularity gives
	\begin{align}\label{4.41}
		\|w_t\|_{H^1(\Omega^\epsilon)} \leq C \left( \|\partial_i^\eta v_l \partial_l^\eta w_i + \partial_t (\partial_i^\eta v_l \partial_l^\eta v_i)\|_{L^2(\Omega^\epsilon)} + \|\varphi'(t)\|_{H^{-\frac{1}{2}}(\partial\Omega^\epsilon)} \right).
	\end{align}
	From Lemma \ref{lem:commutators}, we get
	\begin{align*}
		\int_0^T|\varphi'(t)|^2 dt\leq & \frac{4}{|\partial\Omega^{\epsilon}|^2}\int_0^T\left(\int_{\Omega^{\epsilon}}|(\peta_i \partial_t v_l-\peta_iv_m\peta_m v_l)\peta_lv_{i}|\, dx\right)^2dt\\
		\leq &C\int_0^T\|\nabla^\eta v_t\|_{L^2(\Omega^\epsilon)}^2dt\sup_{t\in [0,T]}\|\nabla^\eta v\|_{L^2(\Omega^\epsilon)}^2\\
		&+C\sup_{t\in [0,T]}\|\nabla^\eta v\|_{L^2(\Omega^\epsilon)}^4\int_0^T\|\nabla^\eta v\|_{H^2(\Omega^\epsilon)}^2dt\\
		\leq & \mathcal{P}\big(\sup_{t\in[0,T]}E^{\epsilon}(t)\big).
	\end{align*}
	Combining with \eqref{4.41}, we obtain
	\begin{align}\label{4.42} \int_{0}^{T}\|w_{t}\|_{H^1(\Omega^{\epsilon})}^{2}dt\le\mathcal{P}\big(\sup_{t\in[0,T]}E^{\epsilon}(t)\big).
	\end{align}
	
	Due to \eqref{4.36} and \eqref{4.32},  $\partial_{t}v-w\in\mathcal{V}(t)$. Therefore, we can set  $\phi=\partial_{t}v-w$ in \eqref{4.4}, yielding
	
	\begin{align}\label{4.43a}
		&\frac{1}{2}\frac{d}{dt}\|\partial_{t}v\|_{L^2(\Omega^{\epsilon})}^{2}\, dx\nonumber
		\\=&\int_{\Omega^{\epsilon}}\partial_t^2 v_iw_i\, dx+\int_{\Omega^{\epsilon}}\peta_i v_l\peta_lq (\partial_{t}v_i-w_i)\, dx
		-\int_{\Omega^{\epsilon}}\peta_j v_l \peta_l(\symeta v)_{ij}(\partial_{t}v_i-w_i)\, dx\nonumber\\
		&- \sum_{j=1}^3\int_{\Omega^\epsilon} \partial_k^\eta v_l \partial_l^\eta ((F_{0})_{j}\cdot\nabla\eta_{k}(F_{0})_{j}\cdot\nabla\eta_{i}) (\partial_{t}v_i-w_i)\, dx\\&-\int_{\Omega^{\epsilon}} \partial_t(\symeta v)_{ij}\peta_j(\partial_{t}v_i-w_i)\, dx\nonumber\\
		&-\sum_{j=1}^3\int_{\Omega^\epsilon} \partial_t ((F_{0})_{j}\cdot\nabla\eta_{k}(F_{0})_{j}\cdot\nabla\eta_{i}) \partial_k^\eta (\partial_{t}v_i-w_i) \, dx\nonumber\\
		&-\int_{\partial \Omega^{\epsilon}} \partial_t n_k\left[\left(\left(\symeta v\right)_{i k}-q
		\delta_{i k}\right)+\sum_{j=1}^3\left(\mathbb{F}_{kj} \mathbb{F}_{ij}-\delta_{k i}\right)\right] (\partial_{t}v_i-w_i) dS.
	\end{align}
	For the integral involving double $\nabla^\eta\partial_t v$, we have, by \eqref{eq.sysnv}, that
	\begin{align}\label{4.44}
		&-\int_{\Omega^{\epsilon}} \partial_t(\symeta v)_{ij}\peta_j\partial_{t}v_i\, dx
		=-\int_{\Omega^{\epsilon}} \partial_t(\peta_i v_j+\peta_j v_i)\peta_j\partial_{t}v_i\, dx\nonumber\\
		=&-\int_{\Omega^{\epsilon}} (\peta_i \partial_tv_j-\peta_i v_l\peta_l v_j+\peta_j\partial_t v_i-\peta_j v_l\peta_l v_i)\peta_j\partial_{t}v_i\, dx\nonumber\\
		=&-\frac{1}{2}\int_{\Omega^{\epsilon}}|\symeta \partial_tv|^2\, dx+\int_{\Omega^{\epsilon}}\peta_i v_l\peta_l v_j\peta_j\partial_{t}v_i\, dx
		+\int_{\Omega^{\epsilon}}\peta_j v_l\peta_l v_i\peta_j\partial_{t}v_i\, dx\nonumber\\
		=&-\frac{1}{2}\|\symeta v_t\|_{L^2(\Omega^{\epsilon})}^2+\int_{\Omega^{\epsilon}}\peta_i v_l\peta_l v_j\peta_j\partial_{t}v_i\, dx+\int_{\Omega^{\epsilon}}\peta_j v_l\peta_l v_i\peta_j\partial_{t}v_i\, dx.
	\end{align}
	Combining  equations \eqref{4.43a} and \eqref{4.44}, we arrive at
	
	\begin{align}\label{4.45}
		&\frac{1}{2}\|\partial_{t}v\|_{L^2(\Omega^{\epsilon})}^{2}+\frac{1}{2}\int_{0}^{t}\|\symeta v_s(s,\cdot)\|_{L^2(\Omega^{\epsilon})}^{2}ds \nonumber\\
		=&\frac{1}{2}(\|u_{1}\|_{L^2(\Omega^{\epsilon})}^{2}
		+\underbrace{\int_{0}^{t}\int_{\Omega^{\epsilon}}\peta_i v_l\peta_lq (\partial_{s}v_i-w_i)\, dxds}_{\sym_2}
		\\& \underbrace{-\int_{0}^{t}\int_{\Omega^{\epsilon}}\peta_j v_l \peta_l(\symeta v)_{ij}(\partial_{s}v_i-w_i)\, dxds}_{\sym_3}\nonumber\\
		& \underbrace{- \sum_{j=1}^3 \int_{0}^{t}\int_{\Omega^{\epsilon}}\partial_k^\eta v_l \partial_l^\eta ((F_{0})_{j}\cdot\nabla\eta_{k}(F_{0})_{j}\cdot\nabla\eta_{i}) (\partial_{s}v_i-w_i)\, dxds}_{\sym_4}\nonumber\\
		&+\underbrace{\int_{0}^{t}\int_{\Omega^{\epsilon}}\peta_i v_l\peta_l v_j\peta_j\partial_{s}v_i\, dxds}_{\sym_5}+\underbrace{\int_{0}^{t}\int_{\Omega^{\epsilon}}\peta_j v_l\peta_l v_i\peta_j\partial_{s}v_i\, dxds}_{\sym_6}\nonumber\\
		&+\underbrace{\int_{0}^{t}\int_{\Omega^{\epsilon}} \partial_s(\symeta v)_{ij}\peta_jw_i\, dxds}_{\sym_7}\\&\underbrace{-\sum_{j=1}^3\int_{0}^{t} \int_{\Omega^\epsilon} \partial_t ((F_{0})_{j}\cdot\nabla\eta_{k}(F_{0})_{j}\cdot\nabla\eta_{i}) \partial_k^\eta (\partial_{s}v_i-w_i) \, dxds}_{\sym_8}\nonumber\\
		&
		\underbrace{-\int_{0}^{t}\int_{\partial \Omega^{\epsilon}} \partial_s n_k\left[\left(\left(\symeta v\right)_{i k}-q
			\delta_{i k}\right)+\sum_{j=1}^3\left(\mathbb{F}_{kj} \mathbb{F}_{ij}-\delta_{k i}\right)\right] (\partial_{s}v_i-w_i)dSds.}_{\sym_9}
	\end{align}
	
	Next, we  estimate the terms $\sym_{i}$ ($i=1,\cdots,9$). By \eqref{4.2} and \eqref{4.3}, the matrix $A$ satisfies the uniform bounds
	\begin{align*}
		c_0 \leq \|A\|_{L^\infty(\Omega^\epsilon)} \leq C
	\end{align*}
	for constants $c_0, C > 0$ independent of $\epsilon$, where the upper bound follows from
	\begin{align*}
		\|A\|_{L^\infty} &\leq C \|\nabla \eta\|_{L^\infty(\Omega^\epsilon)}^2 \\
		&\leq C (\|\nabla \eta - I\|_{L^\infty(\Omega^\epsilon)} + \|I\|_{L^\infty(\Omega^\epsilon)})^2 \\
		&\leq C (\theta^2 + 1)^2 \leq C,
	\end{align*}
	and the lower bound holds since $A$ is invertible with $\det A = 1$. These uniform bounds will be used implicitly in subsequent estimates.
	By integrating by parts, \eqref{4.38} and \eqref{4.42}, we obtain
	\begin{align}\label{S1}
		|\sym_1|\le& \int_{0}^{t}\int_{\Omega^{\epsilon}}|\partial_{s}v\cdot \partial_{s}w|\, dxds+\bigg|\left.\int_{\Omega^{\epsilon}}\partial_{s}v\cdot w\, dx\right|_{0}^{t}\bigg| \nonumber\\
		\le &\big(\sup_{t\in[0,T]}\|v_t\|_{L^2(\Omega^{\epsilon})}\big) t^{1/2}\bigg(\int_{0}^{t}\|w_s\|_{L^2(\Omega^{\epsilon})}^2\, ds\bigg)^{1/2}\nonumber\\
		&+\frac{\delta}{2}\sup_{t\in[0,T]}\|v_{t}\|_{L^2(\Omega^{\epsilon})}^{2}+C_{\delta}\sup_{t\in[0,T]}\|w\|_{L^2(\Omega^{\epsilon})}^{2}\nonumber \\
		\le &\delta \sup_{t\in[0,T]}\|v_{t}\|_{L^2(\Omega^{\epsilon})}^{2}+C_{\delta}\sup_{t\in[0,T]}\|w\|_{L^2(\Omega^{\epsilon})}^{2}+tC_\delta \int_{0}^{t}\|w_s\|_{L^2(\Omega^{\epsilon})}^2\, ds\nonumber \\
		\le &\delta\sup_{t\in[0,T]}E^{\epsilon}(t)+C_\delta \big(\mathcal{M}_0+T^{1/2}\mathcal{P}\big(\sup_{t\in[0,T]}E^{\epsilon}(t)\big)\big)+C_\delta T\mathcal{P}\big(\sup_{t\in[0,T]}E^{\epsilon}(t)\big) \nonumber \\
		\le &C_\delta \mathcal{M}_0+\delta\sup_{t\in[0,T]}E^{\epsilon}(t)+ C_{\delta}T^{\frac{1}{2}}\mathcal{P}\big(\sup_{t\in[0,T]}E^{\epsilon}(t)\big),
	\end{align}
	for any $\delta>0$.
	By the H\"older inequality ($L^4$-$L^4$-$L^2$), the Sobolev embedding theorem ($H^1\subset L^4$) and \eqref{4.38}, we get
	\begin{align}\label{S2}
		&|\sym_2|\leq C\int_{0}^{t} \|\nabla v\|_{H^1(\Omega^{\epsilon})}\|\nabla q\|_{H^1(\Omega^{\epsilon})}(\|v_s\|_{L^2(\Omega^{\epsilon})}+\|w\|_{L^2(\Omega^{\epsilon})})ds\nonumber\\
		\leq  & CT^{1/2}\sup_{t \in [0, T]}\|v\|_{H^2(\Omega^{\epsilon})}\sup_{t \in [0, T]}(\|v_t\|_{L^2(\Omega^{\epsilon})}+ \|w\|_{L^2(\Omega^{\epsilon})})\left(\int_0^T\|q\|_{H^2(\Omega^{\epsilon})}^2ds\right)^{1/2}\nonumber\\
		\leq &\delta \sup_{t \in [0, T]}\|v\|_{H^2(\Omega^{\epsilon})}^2+C_\delta T\sup_{t \in [0, T]}(\|v_t\|_{L^2(\Omega^{\epsilon})}^2+ \|w\|_{L^2(\Omega^{\epsilon})}^2)\int_0^T\|q\|_{H^2(\Omega^{\epsilon})}^2ds\nonumber\\
		\leq &\delta \sup_{t \in [0, T]}\mathcal{E}(t)+C_\delta T\left(\sup_{t \in [0, T]}\mathcal{E}(t)+\mathcal{M}_0+T^{1/2}\mathcal{P}\big(\sup_{t\in[0,T]}E^{\epsilon}(t)\big)\right)\sup_{t\in[0,T]}E^{\epsilon}(t)\nonumber\\
		\leq &C_\delta \mathcal{M}_0+\delta\sup_{t\in[0,T]}E^{\epsilon}(t)+ C_{\delta}T\mathcal{P}\big(\sup_{t\in[0,T]}E^{\epsilon}(t)\big),
	\end{align}
	Similarly, by the H\"older inequality ($L^\infty$-$L^2$-$L^2$), the Sobolev embedding theorem ($H^2\subset L^\infty$) and \eqref{4.38}, we have
	\begin{align}\label{S3}
		|\sym_3|&\leq C\int_{0}^{t} \|\nabla v\|_{H^2(\Omega^{\epsilon})}\|v\|_{H^2(\Omega^{\epsilon})}(\|v_s\|_{L^2(\Omega^{\epsilon})}+\|w\|_{L^2(\Omega^{\epsilon})})ds\nonumber\\
		&\leq C_\delta \mathcal{M}_0+\delta\sup_{t\in[0,T]}E^{\epsilon}(t)+ C_{\delta}T\mathcal{P}\big(\sup_{t\in[0,T]}E^{\epsilon}(t)\big).
	\end{align}
	
	From the H\"older inequality ($L^6$-$L^6$-$L^6$-$L^2$), the Sobolev embedding theorem ($H^1 \subset L^6$) and \eqref{4.41}, it follows	
	\begin{align*}
		|\sym_4|&\leq C\int_{0}^{t} \|\nabla v\|_{H^1(\Omega^{\epsilon})}\|\nabla\left(\left(F_0\right)_j \cdot \nabla\right)  \eta\|_{H^1(\Omega^{\epsilon})}\|\left(\left(F_0\right)_j \cdot \nabla\right) \eta\|_{H^1(\Omega^{\epsilon})}\\&\quad\left(\|v_s\|_{L^2(\Omega^{\epsilon})}+\|w\|_{L^2(\Omega^{\epsilon})}\right)ds\\
		&\leq C_\delta \mathcal{M}_0+\delta\sup_{t\in[0,T]}E^{\epsilon}(t)+ C_{\delta}T^2\mathcal{P}\big(\sup_{t\in[0,T]}E^{\epsilon}(t)\big).
	\end{align*}
	
	By the H\"older inequality  and Sobolev embedding theorem, we get
	\begin{align}\label{S45}
		|\mathcal{S}_5+\mathcal{S}_6|\le C\int_0^t\|\nabla v\|_{H^1(\Omega^{\epsilon})}^2\|\nabla v_s\|_{L^2(\Omega^{\epsilon})}ds\le\delta\sup_{t \in[0, T]}E^{\epsilon}(t)+C_{\delta}T\mathcal{P}(\sup_{t\in[0,T]}E^{\epsilon}(t)).
	\end{align}
	
	From Lemma \ref{lem:commutators}, the H\"older inequality ($L^2$-$L^2$ and $L^4$-$L^4$-$L^2$), the Sobolev embedding theorem ($H^1\subset L^4$), we obtain
	\begin{align}\label{S6}
		|\sym_7|\leq &C\int_{0}^{t} (\|\nabla v_s\|_{L^2(\Omega^{\epsilon})} +\|\nabla v\|_{H^1(\Omega^{\epsilon})}^2)\|\nabla w\|_{L^2(\Omega^{\epsilon})}ds\nonumber\\
		\leq &C_\delta \mathcal{M}_0+\delta\sup_{t\in[0,T]}E^{\epsilon}(t)+ C_{\delta}T\mathcal{P}\big(\sup_{t\in[0,T]}E^{\epsilon}(t)\big).
	\end{align}
	By the H\"older inequality, we have

	By the H\"older inequality ($L^\infty$-$L^2$-$L^2$), the Sobolev embedding theorem ($H^2\subset L^\infty$), \eqref{4.41} and
	$$
	\|(F_{0})_{j}\cdot\nabla\partial_{s}\eta\|_{L^2(\Omega^{\epsilon})}=\|(F_{0})_{j}\cdot\nabla v\|_{L^2(\Omega^{\epsilon})}\leq \|(F_{0})_{j}\|_{H^1(\Omega^{\epsilon})}\|v\|_{H^2(\Omega^{\epsilon})},
	$$
	we have
	\begin{align*}
		|\sym_8|&\leq C\sum_{j=1}^{3}\int_{0}^{t} \|(F_{0})_{j}\cdot\nabla\partial_{s}\eta\|_{L^2(\Omega^{\epsilon})}\|(F_{0})_{j}\cdot\nabla\eta\|_{H^2(\Omega^{\epsilon})}(\| \nabla v_s\|_{L^2(\Omega^{\epsilon})}+\|w\|_{H^1(\Omega^{\epsilon})})ds\\&\leq C_\delta \mathcal{M}_0+\delta\sup_{t\in[0,T]}E^{\epsilon}(t)+ C_{\delta}T\mathcal{P}\big(\sup_{t\in[0,T]}E^{\epsilon}(t)\big),
	\end{align*}
	
	Finally we consider the estimate of $\mathcal{S}_9$. By the first Korn’s inequality (Lemma \ref{lem.korn}), we know that
	\begin{align*}
		\|\nabla^\eta v_t\|_{L^2(\Omega^{\epsilon})}^2 \leq C  (\|v_t\|_{L^2(\Omega^{\epsilon})}^2 + \|\symeta v_t\|_{L^2(\Omega^{\epsilon})}^2).
	\end{align*}
	From the H\"older inequality ($L^\infty$-$L^2$-$L^2$) and \eqref{4.3} with $\theta<\delta$, it follows
	\begin{align*}
		\left|\int_0^t[\|\nabla v_s\|_{L^2(\Omega^{\epsilon})}^2-\|\nabla^\eta v_s\|_{L^2(\Omega^{\epsilon})}^2]ds\right|= &\left|\int_0^t\int_{\Omega^{\epsilon}} (\delta_{kj}-A_{kl}A_{jl})\partial_k\partial_s v_i\partial_j\partial_s v_i \, dxds\right|\\
		\leq &\delta \sup_{t\in[0,T]}E^{\epsilon}(t).
	\end{align*}
	Since  $0<c_0 \leq |A|\leq C_{1}$ and $N^{\epsilon}$ is outward unit normal, we get norm of a vector $A^{\top}\mathcal{N}^{\epsilon}$ on $\Gamma^{\epsilon}$ satisfies
	$$
	0<C_{2}\leq|A^{\top}\mathcal{N}^{\epsilon}|\leq C_{3}.
	$$
	where $C_{2}$ and $C_{3}$ are constants independent of $\epsilon$. By $n_{j}=\frac{A_{ij}\mathcal{N}_{i}^{\epsilon}}{|A^{\top}\mathcal{N}^{\epsilon}|}$ , we have
	\begin{align*}
		\partial_t n_j & =\frac{-A_{i j} N_i^\epsilon \partial_t\left|A^{\top} \mathcal{N}^{\epsilon}\right|}{\left|A^{\top} \mathcal{N}^{\epsilon}\right|^2}+\frac{\left(\partial_t A_{i j}\right) \mathcal{N}_i^{\epsilon}}{\left|A^{\top}\mathcal{N}^\epsilon\right|} \\
		\\& =\frac{-A_{i j} \mathcal{N}_i^\epsilon\left(A_{m l} \mathcal{N}_m^\epsilon\right) \partial_t\left(A_{k l} \mathcal{N}_k^\epsilon\right)}{\left|A^{\top} \mathcal{N}^{\epsilon}\right|^3}+\frac{\left(\partial_t A_{i j}\right) \mathcal{N}_i^{\epsilon}}{\left|A^{\top} \mathcal{N}^\epsilon\right|} .
	\end{align*}
	Since  \eqref{deriv.A} and $ 0<C_{2}\leq |A^{\top}\mathcal{N}^{\epsilon}|\leq C_{3}$,  we have
	\begin{align*}
		& \int_{\partial \Omega^\epsilon} \frac{\left(\partial_t A_{i j}\right) \mathcal{N}_i^\epsilon}{\left|A^{\top} \mathcal{N}^\epsilon\right|} q\left(\partial_t v_j-w_j\right) d S \\
		& = \int_{\partial \Omega^\epsilon}\frac{-\partial_j^\eta v_l A_{i l}}{\left|A^{\top} \mathcal{N}^\epsilon\right|} \mathcal{N}_i^\epsilon q (\partial_t v_j-w_j) d S \\
		& =\int_{\partial \Omega^\epsilon}-\partial_j^\eta v_l n_{l}q (\partial_t v_j-w_j) d S \\
		& =\int_{\Omega^\epsilon}-\partial_l^\eta\left(\partial_j^\eta v_l  q (\partial_t v_j-w_j)\right) d x \\
		& =-\int_{\Omega^\epsilon}(\partial_l^\eta\partial_j^\eta v_l)  q (\partial_t v_j-w_j)+\partial_j^\eta v_l \partial_l^\eta q (\partial_t v_j-w_j)+\partial_j^\eta v_l  q \partial_l^{\eta} (\partial_t v_j-w_j) d x.
	\end{align*}
	By Lemma \ref{lem:commutators}, we have $(\partial_l^\eta\partial_j^\eta v_l ) q (\partial_t v_j-w_j)=0$. The estimate of $\int_0^t \int_{\Omega^{\epsilon}}\partial_j^\eta v_l \partial_l^\eta q (\partial_s v_j-w_j)dx ds $ is completely similar to that of $\sym_{2}$. From the H\"older inequality ($L^4$-$L^4$-$L^2$) and the Sobolev embedding theorem $(H_{1} \subset L ^{4})$, it follows
	\begin{align}\label{4.51}
		&\int_0^t \int_{\Omega^{\epsilon}} \partial_j^\eta v_l q \partial_l^\eta (\partial_s v_k-w_k)d x d s\nonumber\\
		& \leq  \int_0^t\|q\|_{L^4(\Omega^\epsilon)}\|\nabla v\|_{L^4(\Omega^\epsilon)}(\left\|\nabla \partial_s v\right\|_{L^{2}(\Omega^\epsilon)}+ \left\|\nabla w\right\|_{L^{2}(\Omega^\epsilon)})d s \nonumber\\
		& \leq \int_0^t\|q\|_{H^1(\Omega^{\epsilon})}\|v\|_{H^2(\Omega^{\epsilon})}(\left\|\nabla\partial_s v\right\|_{L^2(\Omega^{\epsilon})}+\left\|w\right\|_{H^1(\Omega^{\epsilon})}) d s.
	\end{align}
	Now, we control the  term $\|q\|_{H^1(\Omega^{\epsilon})}$. By Lemma \ref{B.1}, it holds
	\begin{align*}
		&\|v_{i}\|_{H^2(\Omega^\epsilon)}+\| q\|_{H^1(\Omega^\epsilon)} \\
		\leq & C\left\|\partial_{j}((A_{jr}A_{kr}-\delta_{jk})\partial_{k}v_{i})\right\|_{L^2 (\Omega^\epsilon)}+C\left\|(A_{ji}-\delta_{ji})\partial_{j}q\right\|_{L^2 (\Omega^\epsilon)}+C\left\|\partial_{s}v_{i} \right\|_{L^2 (\Omega^\epsilon)} \\
		& +C\left\|\sum_{j=1}^3((F_0)_j\cdot\nabla)^2\eta_i\right\|_{L^2 (\Omega^\epsilon)}+C\left\|(A_{ji}-\delta_{ji})\partial_{j}v_{i}\right\|_{H^1(\Omega^\epsilon)}+\|v\|_{H^{3/2} (\partial\Omega^\epsilon)}.
	\end{align*}
	This along with \eqref{deriv.A}, Lemma \ref{lem4.1} and Lemma \ref{lemB.5} further implies that
	\begin{align*}
		\|q\|_{H^1(\Omega^{\epsilon})}\leq C\mathcal{M}_{0}+ \sup _{t \in[0, T]}\mathcal{P}( E^{\epsilon}(t)).
	\end{align*}
	Thus
	\begin{align}\label{4.52}
		\eqref{4.51}\leq
		\mathcal{Q}(\mathcal{M}_{0})+\delta\sup_{t\in[0,T]}E^{\epsilon}(t)+C_{\delta}T^{\frac{1}{2}}\mathcal{P}\big(\sup_{t\in[0,T]}E^{\epsilon}(t)\big).
	\end{align}
	
	We still need to consider boundary integrals,
	\begin{align*}
		&- \int_{\partial \Omega^{\epsilon}} \frac{A_{i j} \mathcal{N}_{i}^{\epsilon}\partial_t A_{k l} A_{m l} \mathcal{N}_k^{\epsilon}\mathcal{N}_m^{\epsilon}}{\left|A^{\top} \mathcal{N}^{\epsilon}\right|^3} q \left(\partial_t v_j-w_j\right) d S \\
		& =  -\int_{\partial \Omega^{\epsilon}}\frac{n_{j} \partial_t A_{k l} A_{m l}\mathcal{N}_k^{\epsilon} \mathcal{N}_m^{\epsilon}} {\left|A^{\top} \mathcal{N}^{\epsilon}\right|^2} q\left(\partial_t v_j-w_j\right) d S\\
		&=-\int_{\Omega^\epsilon}\partial_j^\eta\left(\frac{\partial_t A_{k l} A_{m l}\mathcal{N}_k^{\epsilon} \mathcal{N}_m^{\epsilon}} {\left|A^{\top} \mathcal{N}^{\epsilon}\right|^2} q\left(\partial_t v_j-w_j\right)\right) d x.
	\end{align*}
	
	Now, we only consider the estimation of the representative term as follows, as the other terms can yield similar results, so the proof process can be omitted. By (\ref{deriv.A}), we have
	\begin{align*}
		&\int_{0}^{t} \int_{\Omega^{\epsilon}} \frac{\partial_j^\eta\left(\partial_t A_{k l} A_{m l}\mathcal{N}_k^{\epsilon} \mathcal{N}_m^{\epsilon}\right)} {\left|A^{\top} \mathcal{N}^{\epsilon}\right|^2} q\left(\partial_s v_j-w_j\right) d xd s
		\\&=\int_{0}^{t} \int_{\Omega^{\epsilon}} \frac{\partial_j^\eta\left(-\partial_l^\eta v_{h}A_{k h} A_{m l}\mathcal{N}_k^{\epsilon} \mathcal{N}_m^{\epsilon}\right)} {\left|A^{\top} \mathcal{N}^{\epsilon}\right|^2} q\left(\partial_s v_j-w_j\right) d x ds
		\\&=\underbrace{\int_{0}^{t} \int_{\Omega^{\epsilon}}- \frac{\partial_j^\eta( A_{m l}\mathcal{N}_m^{\epsilon}) \mathcal{N}_k^{\epsilon}\partial_l^\eta v_{h}A_{k h} } {\left|A^{\top} \mathcal{N}^{\epsilon}\right|^2} q\left(\partial_s v_j-w_j\right) d x ds}_{\mathcal{T}_{1}}\\&\quad +\underbrace{\int_{0}^{t} \int_{\Omega^{\epsilon}}- \frac{\partial_j^\eta( \partial_l^\eta v_{h})A_{m l}\mathcal{N}_m^{\epsilon} \mathcal{N}_k^{\epsilon}A_{k h} } {\left|A^{\top} \mathcal{N}^{\epsilon}\right|^2} q\left(\partial_s v_j-w_j\right) d x ds}_{\mathcal{T}_{2}}
		\\&\quad +\underbrace{\int_{0}^{t} \int_{\Omega^{\epsilon}}- \frac{\partial_j^\eta( \mathcal{N}_k^{\epsilon}A_{k h})  \partial_l^\eta v_{h}A_{m l}\mathcal{N}_m^{\epsilon}\partial_l^\eta v_{h}} {\left|A^{\top} \mathcal{N}^{\epsilon}\right|^2} q\left(\partial_s v_j-w_j\right) d x ds}_{\mathcal{T}_{3}}.
	\end{align*}
	Since $|\partial \Omega^{\epsilon}|_{H^{3.5}} \leq C |\partial \Omega|_{H^{3.5}}$, we can get $\|\mathcal{N}^{\epsilon}\|_{W^{1,\infty}}\leq C_5$ for  constant $C_5$ independent of $\epsilon$.  Then by $ 0<C_{2}\leq |A^{\top}\mathcal{N}^{\epsilon}|\leq C_{3}$, (\ref{4.16}), the H\"older inequality $(L^{2}$-$L^{\infty}$-$L^{2} )$, the Sobolev embedding theorem $(H_{2} \subset L ^{\infty})$ and \eqref{4.38}, we get
	\begin{align*}
		|\mathcal{T}_{2}| & \leq C\int_0^t(\left\|\nabla^2 \eta\right\|_{L^4(\Omega^{\epsilon})}\|\nabla v\|_{L^4 (\Omega^{\epsilon})}+\left\| v\right\|_{H^2(\Omega^{\epsilon})})(\|\partial_{s} v\|_{L^2(\Omega^{\epsilon})}+\| w\|_{L^2})\|q\|_{L^{\infty}(\Omega^{\epsilon})} d s \\
		& \leq C T^{1 / 2}( \sup _{s \in[0,t]}\|v\|_{H^2(\Omega^{\epsilon})}\sup _{s \in[0,t]}\|\eta\|_{H^3(\Omega^{\epsilon})}+ \sup _{s \in[0,t]}\|v\|_{H^2(\Omega^{\epsilon})})\\
		&\quad \cdot(\sup _{s \in[0,t]} \left\|\partial_s v\right\|_{L^2(\Omega^{\epsilon})}+\sup _{s \in[0,t]} \left\|w\right\|_{L^2(\Omega^{\epsilon})})\left(\int_0^t\|q\|_{H^2(\Omega^{\epsilon})}^2 d s\right)^{\frac{1}{2}}
		\\
		&\leq  C_\delta \mathcal{M}_0+\delta\sup_{t\in[0,T]}E^{\epsilon}(t)+C_\delta T^{\frac{1}{2}} \mathcal{P}\left(\sup_{t\in[0,T]}E^{\epsilon}(t)\right).
	\end{align*}
	Similarly, we have
	$$
	|\mathcal{T}_{1}+\mathcal{T}_{3}|\leq
	\mathcal{Q}(\mathcal{M}_{0})+\delta\sup_{t\in[0,T]}E^{\epsilon}(t)+C_{\delta}T^{\frac{1}{2}}\mathcal{P}\big(\sup_{t\in[0,T]}E^{\epsilon}(t)\big).
	$$
	Then, by the fact $\nabla|A^{\top} \mathcal{N}^{\epsilon}|=\nabla\left(\sum\limits_{h=1}^{3}(A_{rh}\mathcal{N}_{r}^{\epsilon})^{2}\right)^{\frac{1}{2}}$, and similar to the estimate of (\ref{4.51}), we can obtain
	\begin{align*}
		&\int_{0}^{t} \int_{\partial \Omega^{\epsilon}} \frac{A_{i j} \mathcal{N}_{i}^{\epsilon}\partial_t A_{k l} A_{m l} \mathcal{N}_k^{\epsilon}}{\left|A^{\top} \mathcal{N}^{\epsilon}\right|^3} q \left(\partial_t v_j-w_j\right) d S ds\\
		\leq &\mathcal{Q}(\mathcal{M}_{0})+\delta\sup_{t\in[0,T]}E^{\epsilon}(t)+C_{\delta}T^{\frac{1}{2}}\mathcal{P}\big(\sup_{t\in[0,T]}E^{\epsilon}(t)\big).
	\end{align*}
	
	Finally, we consider the estimates of
	$$-\int_{0}^{t}\int_{\partial \Omega^{\epsilon}} \partial_s n_k\left[\left(\symeta v\right)_{i k}+\sum_{j=1}^3\left(\mathbb{F}_{kj} \mathbb{F}_{ij}-\delta_{k i}\right)\right] (\partial_{s}v_i-w_i)dSds.$$
	
	We only provide estimates with elastic terms, and other terms are handled similarly as before.
	\begin{align*}
		&\int_{0}^{t} \int_{\partial \Omega^{\epsilon}} \partial_s n_k\left(\mathbb{F}_{i j} \mathbb{F}_{k j}-\delta_{i k}\right)\left(\partial_s v_i-w_i\right) d S d s \\
		&=\int_{0}^{t} \int_{\partial \Omega^{\epsilon}} \left(\frac{-A_{h k} \mathcal{N}_h^\epsilon\left(A_{m l} \mathcal{N}_m^\epsilon\right) \partial_s\left(A_{k l} \mathcal{N}_k^\epsilon\right)}{\left|A^{\top} \mathcal{N}^{\epsilon}\right|^3}+\frac{\left(\partial_s A_{h k}\right) \mathcal{N}_h^{\epsilon}}{\left|A^{\top} \mathcal{N}^\epsilon\right|}\right)\\
		&\qquad\qquad\cdot\left(\mathbb{F}_{i j} \mathbb{F}_{kj}-\delta_{i k}\right)\left(\partial_s v_i-w_i\right)d S d s.
	\end{align*}
	Then,
	$$\begin{aligned}
		&\int_{0}^{t} \int_{\partial \Omega^{\epsilon}} \frac{\left(\partial_s A_{i j}\right) \mathcal{N}_i^{\epsilon}}{\left|A^{\top} \mathcal{N}^\epsilon\right|}\left(\mathbb{F}_{i j} \mathbb{F}_{k j}-\delta_{i k}\right)\left(\partial_s v_i-w_i\right)d S d s
		\\&=\underbrace{\int_{0}^{t}\int_{\Omega^\epsilon}-(\partial_l^\eta\partial_j^\eta v_l) \left((F_{0})_{j}\cdot\nabla\eta_{i}(F_{0})_{j}\cdot\nabla\eta_{k}-\delta_{i k}\right) (\partial_s v_i-w_{i})dxds}_{\mathcal{O}_{1}}\\&+\underbrace{\int_{0}^{t}\int_{\Omega^\epsilon}\partial_k^\eta v_l \partial_l^\eta \left((F_{0})_{j}\cdot\nabla\eta_{i}(F_{0})_{j}\cdot\nabla\eta_{k}-\delta_{i k}\right) (\partial_s v_i-w_{i})dxds}_{\mathcal{O}_{2}}\\&+\underbrace{\int_{0}^{t}\int_{\Omega^\epsilon}-\partial_k^\eta v_l  \left((F_{0})_{j}\cdot\nabla\eta_{i}(F_{0})_{j}\cdot\nabla\eta_{k}-\delta_{i k}\right) \partial_l^{\eta} (\partial_s v_i-w_{i}) d xds}_{\mathcal{O}_{3}}.
	\end{aligned}
	$$
	Obviously, due to Lemma \ref{lem:commutators}, $\mathcal{O}_{1}=0$. And the estimate of $\mathcal{O}_{2}$ is exactly the same as that of $\sym_{4}$. By the H\"older inequality
	($L^4$-$L^4$-$L^2$), the boundedness of $\Omega^{\epsilon}$ and the Sobolev embedding theorem, we have
	$$\begin{aligned}
		|\mathcal{O}_{3}|&\leq C\int_0^t\|\left((F_{0})_{j}\cdot\nabla\eta_{i}(F_{0})_{j}\cdot\nabla\eta_{k}-\delta_{i k}\right)\|_{L^4(\Omega^{\epsilon})}\|\nabla v\|_{L^4(\Omega^{\epsilon})}
		\\&\qquad\cdot(\left\|\nabla \partial_s v\right\|_{L^{2}(\Omega^{\epsilon})}+ \left\|\nabla w\right\|_{L^{2}(\Omega^{\epsilon})})d s \\
		& \leq C\int_0^t\|\left((F_{0})_{j}\cdot\nabla\eta_{i}(F_{0})_{j}\cdot\nabla\eta_{k}-\delta_{i k}\right)\|_{H^1(\Omega^{\epsilon})}\|v\|_{H^2(\Omega^{\epsilon})}\\&\qquad\cdot(\left\|\nabla\partial_s v\right\|_{L^2(\Omega^{\epsilon})}+\left\|w\right\|_{H^1(\Omega^{\epsilon})}) d s\\&\leq C\int_0^t(\|(F_{0})_{j}\cdot\nabla\eta\|_{H^2(\Omega^{\epsilon})}^{2}+1)\|v\|_{H^2(\Omega^{\epsilon})}(\left\|\nabla\partial_s v\right\|_{L^2(\Omega^{\epsilon})}+\left\|w\right\|_{H^1(\Omega^{\epsilon})}) d s\\&\leq \mathcal{Q}(\mathcal{M}_{0})+\delta\sup_{t\in[0,T]}E^{\epsilon}(t)+C_{\delta}T^{\frac{1}{2}}\mathcal{P}\big(\sup_{t\in[0,T]}E^{\epsilon}(t)\big).
	\end{aligned}
	$$
\end{proof}

\subsection{Velocity and pressure estimates}

The velocity-pressure system is reformulated as:
\begin{align}\label{4.56}
	\begin{cases}
		-\Delta v_i + \partial_i q = \partial_j \big( (A_{jr} A_{kr} - \delta_{jk}) \partial_k v_i \big) - (A_{ji} - \delta_{ji}) \partial_j q - \partial_t v_i \\
		\hspace{5em} + \sum_{j=1}^3((F_0)_j\cdot\nabla)^2\eta_i& \text{in } [0,T] \times \Omega^\epsilon, \\
		\divop v = - (A_{ji} - \delta_{ji}) \partial_j v_i & \text{in } [0,T] \times \Omega^\epsilon \\
		v \in L^2(0,T; H^{5/2}(\partial \Omega^\epsilon)).
	\end{cases}
\end{align}
Applying Lemma \ref{lemB.1} yields:
\begin{align}\label{4.57}
	\sup_{t \in [0,T]} \|v\|_{H^2(\Omega^\epsilon)}^2 &+ \int_0^T \|v\|_{H^3(\Omega^\epsilon)}^2 dt + \int_0^T \|q\|_{H^2(\Omega^\epsilon)}^2 dt \nonumber\\
	&\leq C \Big[ \big\| \mathcal{R}(v, q, F) \big\|_{L^2(\Omega^\epsilon)}^2 + \int_0^T \big\| \mathcal{R}(v, q, F) \big\|_{H^1(\Omega^\epsilon)}^2 dt \nonumber\\
	&\quad+ \| (A_{ji} - \delta_{ji}) \partial_j v_i \|_{H^1(\Omega^\epsilon)}^2 + \| v \|_{H^{3/2}(\partial \Omega^\epsilon)}^2 \nonumber\\
	&\quad + \int_0^T \| (A_{ji} - \delta_{ji}) \partial_j v_i \|_{H^2(\Omega^\epsilon)}^2 dt + \int_0^T \| v \|_{H^{5/2}(\partial \Omega^\epsilon)}^2 dt \Big],
\end{align}
where \(\mathcal{R}(v, q, F) := \partial_j \big( (A_{jr} A_{kr} - \delta_{jk}) \partial_k v_i \big) - (A_{ji} - \delta_{ji}) \partial_j q - \partial_t v_i + \sum_{j=1}^3((F_0)_j\cdot\nabla)^2\eta_i\).
Using the a priori assumption \(\|A - I\|_{L^\infty(\Omega^\epsilon)} \leq \theta^2\), H\"older's inequality, Proposition \ref{prop4.4}, and Sobolev embeddings, we obtain:

\begin{proposition}\label{prop4.5} It holds
	\begin{align}\label{4.58}
		\begin{split}
			\sup_{t \in [0,T]} \|v\|_{H^2(\Omega^\epsilon)}^2 &+ \int_0^T \|v\|_{H^3(\Omega^\epsilon)}^2 dt + \int_0^T \|q\|_{H^2(\Omega^\epsilon)}^2 dt \\
			&\leq \mathcal{Q}(\mathcal{M}_0) + T^{1/2} \mathcal{P}\big( \sup_{t \in [0,T]} E^\epsilon(t) \big) + \delta \sup_{t \in [0,T]} E^\epsilon(t),
		\end{split}
	\end{align}
	where \(\mathcal{Q}\) and \(\mathcal{P}\) are polynomials, and \(\delta > 0\) is arbitrary.
\end{proposition}

\subsection{Proof of the Proposition 5.1}
\begin{proof}
	Combining estimates \eqref{4.10} and \eqref{4.34A} yields
	\begin{align}\label{4.60}
		\sup_{t\in[0,T]} E^{\epsilon}(t) \leq \mathcal{Q}(\mathcal{M}_{0}) + C\delta \sup_{t\in[0,T]} E^{\epsilon}(t) + T^{\frac{1}{2}} \mathcal{P}\left( \sup_{t\in[0,T]} E^{\epsilon}(t) \right).
	\end{align}
	Choosing $\delta = \frac{1}{2C}$ gives
	\begin{align}\label{4.61}
		\sup_{t\in[0,T]} E^{\epsilon}(t) \leq 2\mathcal{Q}(\mathcal{M}_{0}) + 2T^{\frac{1}{2}} \mathcal{P}\left( \sup_{t\in[0,T]} E^{\epsilon}(t) \right).
	\end{align}
	
	To close the estimate, we establish continuity of $E^{\epsilon}(t)$. From \eqref{4.58} and \eqref{4.34A}, we get
	$v \in L^2(0,T; H^3(\Omega^\epsilon))$ and $v_t \in L^2(0,T; H^1(\Omega^\epsilon))$.
	Using the partition of unity functions and following interpolation theory
	$$L^{2}\left([0,T], H^{3}(\Omega^{\epsilon})\right) \cap H^1\left([0,T], H^{1}(\Omega^{\epsilon})\right) \hookrightarrow C([0,T]; H^2(\Omega^\epsilon)),$$
	we have $v \in C([0,T]; H^2(\Omega^\epsilon))$.
	
	Elliptic regularity gives $q \in C([0,T]; H^1(\Omega^\epsilon))$ since the right-hand side belongs to $C([0,T]; L^2)$ and
	boundary data lie in $\in C([0,T]; H^{1/2})$.Thus $E^{\epsilon}(t)$ is continuous in $t$.
	
	For sufficiently small $T$ satisfying
	\[
	2T^{\frac{1}{2}} \mathcal{P}\left( \sup_{t\in[0,T]} E^{\epsilon}(t) \right) \leq \frac{1}{2} \sup_{t\in[0,T]} E^{\epsilon}(t),
	\]
	we obtain
	\begin{align*}
		\sup_{t\in[0,T]} E^{\epsilon}(t) \leq 4\mathcal{Q}(\mathcal{M}_{0}).\tag*{\qedhere}
	\end{align*}
\end{proof}

\subsection{Justification of the a priori assumption}

\begin{lemma}[Gradient Estimate]\label{lem4.6}
	For sufficiently small $T>0$ and $0<\theta\ll 1$, the a priori assumption holds:
	\begin{align}\label{4.62}
		\sup_{t\in[0,T]} \|\nabla\eta(t) - I\|_{L^{\infty}(\Omega^{\epsilon})} \leq \frac{1}{2}\theta^{2}.
	\end{align}
\end{lemma}

\begin{proof}
	From the flow map definition $\eta(t,x) = x + \int_0^t v(s,x) ds$, it follows
	\begin{align}\label{4.63}
		\|\nabla\eta(t) - I\|_{H^2(\Omega^\epsilon)}
		&= \left\| \int_0^t \nabla v(s) ds \right\|_{H^2(\Omega^\epsilon)}\nonumber \\
		&\leq \sqrt{t} \left( \int_0^t \|\nabla v(s)\|_{H^2(\Omega^\epsilon)}^2 ds \right)^{1/2}\nonumber \\
		&\leq C \sqrt{t} \sup_{s\in[0,T]} \sqrt{E^\epsilon(s)}.
	\end{align}
	
	By Proposition \ref{prop4.6} and the Sobolev embedding $H^2 \subset L^\infty$:
	\begin{align}\label{4.64}
		\|\nabla\eta(t) - I\|_{L^{\infty}(\Omega^\epsilon)}
		&\leq C \|\nabla\eta(t) - I\|_{H^2(\Omega^\epsilon)}\nonumber \\
		&\leq C \sqrt{t} \sup_{s\in[0,T]} \sqrt{E^\epsilon(s)}\nonumber \\
		&\leq C \sqrt{t} \sqrt{\mathcal{Q}(\mathcal{M}_0)}.
	\end{align}
	
	Choosing $T < \left( \frac{\theta^2}{2C\sqrt{\mathcal{Q}(\mathcal{M}_0)}} \right)^2$ yields \eqref{4.62}.
\end{proof}

\section{Continuity of Second Tangential Derivatives}\label{sec6}

We establish quantitative continuity for second-order tangential derivatives:
\begin{proposition}\label{lem4.7}
	For all $t\in[0,T]$,
	\begin{align*}
		\max_{s\in[0,t]}\|\bar\partial^{2}(v(s,\cdot)-u_{0}^{\epsilon})\|_{L^2(\Omega^{\epsilon})}^{2}+\int_{0}^{t}\|\bar{\partial}^{2}(v(s,\cdot)-u_{0}^{\epsilon})\|_{H^1(\Omega^{\epsilon})}^{2}ds\le Ct^{\frac{1}{2}}\mathcal{P}(\mathcal{M}_{0}).
	\end{align*}
\end{proposition}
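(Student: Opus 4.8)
The plan is to rerun the localized second‑tangential‑derivative estimate of Proposition~\ref{prop4.2} for the \emph{difference} $W:=v-u_0^\epsilon$, with pressure $Q:=q-s_0^\epsilon$ ($s_0^\epsilon$ being the Stokes pressure of \eqref{3.1}), and to read off the gain $t^{1/2}$ from the facts that $W(0,\cdot)=0$, that the coefficients $A-\mathrm I$, $AA^\top-\mathrm I$ and $\nabla^{\geq2}\eta$ are of size $\leq C\sqrt t\,\sqrt{\mathcal Q(\mathcal M_0)}$ (by Lemma~\ref{lem4.6}, \eqref{4.68}, \eqref{4.20}, Proposition~\ref{prop4.5}), and that $\sup_{t\in[0,T]}E^\epsilon(t)\leq\mathcal Q(\mathcal M_0)$. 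First I would derive the system for $(W,Q)$. Subtracting \eqref{3.1} from \eqref{2.3b}, using $\Delta u_{0i}^\epsilon=\partial_i s_0^\epsilon$ and $\Delta^\eta W_i=\divop^\eta\symeta W_i-\peta_i(\divop^\eta W)$ with $\divop^\eta W=-(A_{ji}-\delta_{ji})\partial_j u_{0i}^\epsilon$, one obtains
\begin{align*}
\partial_t W_i+\peta_i Q-\divop^\eta\symeta W_i=\peta_j(\mathbb F_{im}\mathbb F_{jm}-\delta_{ij})+\mathcal G_i ,
\end{align*}
where $\mathcal G$ is a sum of terms of the form (one of $A-\mathrm I$, $AA^\top-\mathrm I$, $\nabla^{\geq2}\eta$)$\times$(bounded data among $\nabla^{\leq2}u_0^\epsilon$, $\nabla s_0^\epsilon$). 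For the boundary condition I would combine \eqref{2.3e}, the Stokes conditions in \eqref{3.1} and \eqref{3.3} with the tangency $\mathbb F^\top n=0$, which holds for all $t$ because the matrix $\mathbb F_{ij}A_{ki}$ is time‑independent (an immediate computation from \eqref{2.3c}--\eqref{deriv.A}), hence equals its initial value $(F_0^\epsilon)^\top_{jk}$, which annihilates $\mathcal N^\epsilon$ by \eqref{3.7}; consequently $(\mathbb F\mathbb F^\top-\mathrm I)_{ij}n_j=-n_i$ and $(F_0^\epsilon(F_0^\epsilon)^\top-\mathrm I)_{ij}\mathcal N_j^\epsilon=-\mathcal N_i^\epsilon$, and the subtracted relation collapses to $[\symeta W-Q\,\mathrm I]_{ij}n_j=-\rho_0^\epsilon\mathcal N_i^\epsilon+\mathcal B_i$ on $\partial\Omega^\epsilon$, where $\rho_0^\epsilon:=p_0^\epsilon-s_0^\epsilon$ is a fixed function with $\epsilon$‑uniform $H^2(\Omega^\epsilon)$‑bound (by \eqref{3.2}, \eqref{3.12}, and elliptic regularity for the Stokes pressure) and $\mathcal B$ is ``small'' in the same sense as $\mathcal G$.

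Then I would localize exactly as in Proposition~\ref{prop4.2}: pull $(W,Q)$ back to the boundary charts $(\theta^\epsilon)^l$ (the interior charts producing no boundary terms), apply $\varsigma^l\bar\partial_\alpha\bar\partial_\beta$, test against $\varsigma^l\bar\partial_\alpha\bar\partial_\beta W_i$, integrate over $B^+$ and $[0,t]$, sum, and invoke the Norm Equivalence Lemma; since $W(0,\cdot)=0$ the identity reads $\tfrac12\|\bar\partial^2W(t)\|_{L^2(\Omega^\epsilon)}^2+\int_0^t\|\bar\partial^2\nabla W(s)\|_{L^2(\Omega^\epsilon)}^2\,ds=\sum_i\widetilde{\mathcal R}_i$, the $\widetilde{\mathcal R}_i$ being the analogues of $\mathcal R_1,\dots,\mathcal R_7$ (with $W$ in the highest‑derivative slot, $A$ still built from $\eta$) plus the contributions of $\bar\partial^2\peta_j(\mathbb F\mathbb F^\top)$, of $\mathcal G$, and of the boundary data. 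Each $\widetilde{\mathcal R}_i$ carries an explicit $\int_0^t(\cdot)\,ds$ and is controlled by one of: (i) a small coefficient, giving $\leq C\sqrt t\,\mathcal P(\mathcal M_0)$ directly; (ii) a uniformly bounded factor $g$ (e.g.\ $\bar\partial^2 Q$, $\bar\partial^2(\mathbb F\mathbb F^\top)$, $\rho_0^\epsilon\mathcal N^\epsilon$) paired with a $\bar\partial^2W$‑type factor, via Cauchy--Schwarz in time together with $\int_0^t\|\bar\partial^2W\|_{L^2}^2\,ds\leq t\,\mathcal Q(\mathcal M_0)$ (from $\sup_{[0,t]}\|\bar\partial^2W\|_{L^2}\leq\|v\|_{H^2}+\|u_0^\epsilon\|_{H^2}\leq C\sqrt{\mathcal Q(\mathcal M_0)}$) and $\int_0^t\|g\|^2\,ds\leq t\,\mathcal Q(\mathcal M_0)$; (iii) a $\bar\partial^2\nabla W$‑type factor, absorbed into the left‑hand dissipation by Young's inequality, the residue being $\leq C_\delta t\,\mathcal Q(\mathcal M_0)$. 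The only non‑uniformly‑bounded quantities that enter ($\nabla^{\leq3}v$, $q$ in $H^2$, $v$ in $H^{5/2}(\partial\Omega^\epsilon)$) always appear either inside a small $\nabla^{\geq2}\eta$‑coefficient or contracted with a $\bar\partial^2W$‑factor; in particular the $\rho_0^\epsilon$‑boundary term is $\leq C\mathcal P(\mathcal M_0)\sqrt t\big(\int_0^t\|W\|_{H^{5/2}(\partial\Omega^\epsilon)}^2\,ds\big)^{1/2}$ with the last factor $\leq\mathcal Q(\mathcal M_0)^{1/2}$ by Proposition~\ref{prop4.2} and trace bounds on $u_0^\epsilon$. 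Choosing $\delta$ small, shrinking $T$ if necessary, and using the trace theorem then gives the claimed bound with $\epsilon$‑independent polynomial $\mathcal P$.

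The main obstacle is the boundary bookkeeping. As in Proposition~\ref{prop4.2}, integrating by parts the pressure‑, viscous‑, and elastic‑stress terms produces a cluster of boundary integrals on each $B^0$ which are individually not small; they must be recombined via the boundary conditions of $v$ and of $u_0^\epsilon$ and the tangency $\mathbb F^\top n=0$ so that the $O(1)$ pieces cancel — this is precisely where $(\mathbb F\mathbb F^\top-\mathrm I)n=-n$ matching $(F_0^\epsilon(F_0^\epsilon)^\top-\mathrm I)\mathcal N^\epsilon=-\mathcal N^\epsilon$ is essential — leaving only the genuine forcing $\rho_0^\epsilon\mathcal N^\epsilon$ and $O(\sqrt t)$ remainders. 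To keep this recombination in divergence form and thereby avoid a stray $\mathcal N_j\partial_i W_j$ term, one keeps the viscosity written as $\Delta^\eta W=\divop^\eta\symeta W-\peta(\divop^\eta W)$ with $\divop^\eta W$ small, exactly the device used for $\mathcal R_6$ in Proposition~\ref{prop4.2}. A secondary care, running through every step, is to ensure the coefficients of $\mathcal P$ stay $\epsilon$‑independent, which rests on the $\epsilon$‑uniform bounds for $\Omega^\epsilon,u_0^\epsilon,F_0^\epsilon,s_0^\epsilon,p_0^\epsilon$ from Section~\ref{sec3}.
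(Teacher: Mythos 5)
Your proposal is correct and arrives at the paper's bound by essentially the same localized $\bar\partial^2$-energy estimate applied to $w=v-u_0^\epsilon$, but it organizes the boundary bookkeeping differently. The paper keeps the full pressure $q$ (controlling $\int_0^t\|q\|_{H^2}^2$ through $E^\epsilon$) and simply records the net boundary forcing as $(p_0^\epsilon+1)\mathcal{N}_j$ in \eqref{4.72}, without exhibiting the cancellation that produces it. You instead subtract the Stokes pressure, passing to $(W,Q)=(v-u_0^\epsilon,q-s_0^\epsilon)$ with residual forcing $\rho_0^\epsilon=p_0^\epsilon-s_0^\epsilon$ --- a valid alternative normalization, though it requires fixing the additive constant in $s_0^\epsilon$, which \eqref{3.1} determines only modulo constants --- and, more substantively, you derive from \eqref{2.3c} and \eqref{deriv.A} that $\partial_t(A\mathbb{F})=0$, hence $A\mathbb{F}\equiv F_0^\epsilon$, so the tangency $\mathbb{F}^\top n=0$ persists from $(F_0^\epsilon)^\top\mathcal{N}^\epsilon=0$ in \eqref{3.7}. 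This is precisely the structural reason $(\mathbb{F}\mathbb{F}^\top-\mathrm{I})n=-n$ matches $(F_0^\epsilon(F_0^\epsilon)^\top-\mathrm{I})\mathcal{N}^\epsilon=-\mathcal{N}^\epsilon$ and the elastic boundary contributions collapse to a time-independent data term; the paper relies on this cancellation implicitly, and your computation makes it explicit. Beyond the boundary manipulation, your term-by-term estimates ($O(\sqrt t)$ coefficients from $A-\mathrm{I}$, $AA^\top-\mathrm{I}$, $\nabla^{\geq 2}\eta$; Cauchy--Schwarz in time against $L^2_t$ quantities contained in $E^\epsilon$; Young absorption into the tangential dissipation) coincide with the paper's $\mathcal{W}_1,\dots,\mathcal{W}_6$ bounds. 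One small overreach: since the left-hand side of Proposition~\ref{lem4.7} is the localized tangential energy rather than $E^\epsilon$ itself, the Young absorption closes directly and no $\delta\sup E^\epsilon$ choice or further shrinking of $T$ is actually needed.
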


\begin{proof}
	Define $w_i = v_i - (u_0^\epsilon)_i$. The difference satisfies:
	\begin{align}\label{6.1}
		\partial_t w_i + A_{ji} \partial_j q - \partial_j (A_{jr} A_{kr} \partial_k w_i) = \partial_j (A_{jr} A_{kr} \partial_k (u_0^\epsilon)_i) + \sum_{j=1}^3((F_0)_j\cdot\nabla)^2\eta_i.
	\end{align}
	
	In boundary chart $B^+$ with cutoff $\varsigma$, multiply by $\varsigma^2 \bar{\partial}^2 w_i$ and integrate:
	\begin{align}\label{6.2}
		0=&\frac{1}{2}\frac{d}{dt}\|\varsigma\bar{\partial}^{2}w\|_{L^2(B^{+})}^{2} \nonumber\\
		&+\int_{B^{+}}\bar{\partial}_{\alpha}\bar{\partial}_{\beta}[A_{jr}A_{kr}\partial_{k}w_i]\bar{\partial}_{\alpha}\bar{\partial}_{\beta}\partial_{j}[\varsigma^{2}w_i]dy \nonumber\\
		&+\int_{B^{+}}\bar{\partial}_{\alpha}\bar{\partial}_{\beta}[A_{ji}q]\bar{\partial}_{\alpha}\bar{\partial}_{\beta}\partial_{j}[\varsigma^{2}w_i]dy \nonumber\\
		&+\int_{B^{+}}\varsigma^{2}\bar{\partial}_{\alpha}\bar{\partial}_{\beta}[A_{jr}A_{kr}\partial_{k}(u_{0}^{\epsilon})_{i}]\bar{\partial}_{\alpha}\bar{\partial}_{\beta}\partial_{j}w_idy \nonumber\\
		&-\int_{B^{+}}\varsigma^{2}\bar{\partial}_{\alpha}\bar{\partial}_{\beta}(\sum_{j=1}^3((F_0)_j\cdot\nabla)^2\eta_i)\bar{\partial}_{\alpha}\bar{\partial}_{\beta}w_idy\nonumber
		\\
		&+\int_{B^{+}}\partial_{k}(\varsigma^{2}\bar{\partial}_{\alpha}\bar{\partial}_{\beta}A_{ki}\bar{\partial}_{\alpha}\bar{\partial}_{\beta}\partial_{k}w_i)dy\nonumber
		\\&+\int_{B^{0}}(p_{0}^{\epsilon}\delta_{ij}+\delta_{ij})\mathcal{N}_{j}\bar{\partial}_{\alpha}\bar{\partial}_{\beta}\partial_{k}w_i)dS.
	\end{align}
	
	Integrating \eqref{6.2} over the time interval $[0,t]$, we get
	\begin{align*}
		&\|\varsigma\bar{\partial}^{2}w(t)\|_{L^2(B^{+})}^{2}+\int_{0}^{t}\|\varsigma\bar{\partial}^{2}w(s)\|_{H^1(B^{+})}^{2}ds \\
		\leq &C|\int_{0}^{t}\int_{B^{+}}\bar{\partial}_{\alpha}\bar{\partial}_{\beta}[(A_{jr}A_{kr}-\delta_{jk})\partial_{k}w_i]\bar{\partial}_{\alpha}\bar{\partial}_{\beta}\partial_{j}[\varsigma^{2}w_i]dyds| \\
		&+C|\int_{0}^{t}\int_{B^{+}}\bar{\partial}_{\alpha}\bar{\partial}_{\beta}(A_{jr}A_{kr}\partial_{k}(u_{0}^{\epsilon})_{i})\bar{\partial}_{\alpha}\bar{\partial}_{\beta}\partial_{j}w_idyds| \\
		&+C|\int_{0}^{t}\int_{B^{+}}\bar{\partial}_{\alpha}\bar{\partial}_{\beta}[A_{ji}q]\bar{\partial}_{\alpha}\bar{\partial}_{\beta}\partial_{j}[\varsigma^{2}w_i]dy| \\
		&+C|\int_{0}^{t}\int_{B^{+}}\bar{\partial}_{\alpha}\bar{\partial}_{\beta}(\sum_{j=1}^3((F_0)_j\cdot\nabla)^2\eta_i)\bar{\partial}_{\alpha}\bar{\partial}_{\beta}w_idyds| \\
		&+C|\int_{0}^{t}\int_{B^{+}}\partial_{k}(\varsigma^{2}\bar{\partial}_{\alpha}\bar{\partial}_{\beta}A_{ki}\bar{\partial}_{\alpha}\bar{\partial}_{\beta}w_i)dyds|
		\\&+C|\int_{0}^{t}\int_{B^{0}}(p_{0}^{\epsilon}\delta_{ij}+\delta_{ij})\mathcal{N}_{j}\bar{\partial}_{\alpha}\bar{\partial}_{\beta}w_i)dSds|.
		\\=:&\sum_{i=1}^{6}C\mathcal{W}_{i}.
	\end{align*}
	
	From the assumption on initial data $u^{\epsilon}_{0}$ in (\ref{3.2}), we shall estimate $\mathcal{W}_{1}$. The estimates of $\mathcal{W}_{2}$, $\mathcal{W}_{3}$ and $\mathcal{W}_{5}$ can be derived by similar arguments.
	\begin{align*}
		\mathcal{W}_{1}\le&\int_{0}^{t}\int_{B^{+}}(|\nabla^{3}v|+|\nabla^{3}u^{\epsilon}_{0}|+|\nabla^{2}\eta|(|\nabla^{2}v|+|\nabla^{2}u^{\epsilon}_{0}|)\\
		&\qquad\qquad+(|\nabla^{3}\eta|+|\nabla^{2}\eta|)(|\nabla v|+|\nabla u^{\epsilon}_{0}|)) \\
		&\qquad\qquad\cdot(|\nabla^{3}v|+|\nabla^{2}v|+|\nabla v|+|v|+|\nabla^{3}u^{\epsilon}_{0}|+|\nabla^{2}u^{\epsilon}_{0}|+|\nabla u^{\epsilon}_{0}|+|u^{\epsilon}_{0}|)dyds \\
		\le&C\int_{0}^{t}\big(\|\nabla^{3}v\|_{L^{2}(\Omega^{\epsilon})}+\|\nabla^{3}u^{\epsilon}_{0}\|_{L^{2}(\Omega^{\epsilon})}\\
		&\qquad+\|\nabla^{2}\eta\|_{L^{4}(\Omega^{\epsilon})}(\|\nabla^{2}v\|_{L^{4}(\Omega^{\epsilon})}+\|\nabla^{2}u^{\epsilon}_{0}\|_{L^{4}(\Omega^{\epsilon})}) \\
		&\qquad+(\|\nabla^{3}\eta\|_{L^{2}(\Omega^{\epsilon})}+\|\nabla^{2}\eta\|_{L^{2}(\Omega^{\epsilon})})(\|\nabla v\|_{L^{\infty}(\Omega^{\epsilon})}+\|\nabla u^{\epsilon}_{0}\|_{L^{\infty}(\Omega^{\epsilon})})\big)\\
		&\quad\cdot(\|\nabla v\|_{H^2(\Omega^{\epsilon})}+\|u^{\epsilon}_{0}\|_{H^3(\Omega^{\epsilon})})ds \\
		\leq &Ct^{\frac{1}{2}}\mathcal{P}(\mathcal{M}_{0}).
	\end{align*}
	
	When we consider the estimate of $\mathcal{W}_{4}$, motivated by the estimate of the term in \eqref{4.13} in Proposition \ref{prop4.2}, we can also apply integration by parts and divergence theorem, together with assumption on the initial data $u^{\epsilon}_{0}$ to get
	\begin{align*}
		\mathcal{W}_{4}\leq Ct^{\frac{1}{2}}\mathcal{P}(\mathcal{M}_{0}).
	\end{align*}
	Then,  by the trace theorem, the Cauchy-Schwarz inequality and (\ref{3.12}), we have
	\begin{align*}
		\mathcal{W}_{6}\leq C\int_{0}^{t} \|p_{0}^{\epsilon}+1\|_{H^2(\Omega^{\epsilon})}(\|\nabla v\|_{H^2(\Omega^{\epsilon})}+\|u^{\epsilon}_{0}\|_{H^3(\Omega^{\epsilon})})ds\leq Ct^{\frac{1}{2}}\mathcal{P}(\mathcal{M}_{0}).
	\end{align*}
	The proof is complete.
\end{proof}

\section{Proof of main theorem}\label{sec5}

\begin{proof}[Proof of Theorem \ref{thm1.1}]
	By Lemma \ref{lemB.4}, we get
	\begin{align}\label{5.1}
		\max_{x\in\partial\Omega^{\epsilon}}|(v(t,x)-u_{0}^{\epsilon})\cdot\mathcal{N}^{\epsilon}|\le\|(v-u_{0}^{\epsilon})\cdot\mathcal{N}^{\epsilon}\|_{H^{\frac{3}{2}}(\partial\Omega^{\epsilon})}.
	\end{align}
	
	Notice that the normal on $\partial B^{+}$ is $(0,0,1)$ on $B_{0}$ and is $(y_{1},y_{2},y_{3})$ else, by using trace theorem on $B^{+}$ in \cite{T} or \cite[(A.6)]{CS3},
	\begin{align}\label{5.2} &\|(v-u_{0}^{\epsilon})\cdot\mathcal{N}^{\epsilon}\|_{H^{\frac{3}{2}}(\partial\Omega^{\epsilon})}^{2}\\\leq
		&\sum_{l}\|((v-u_{0}^{\epsilon})\circ\theta^{l})\cdot n\|_{H^{\frac{3}{2}}(\partial{B^{+}})}^{2} \nonumber\\
		\leq &C\sum_{l}\|\bar{\partial}^{2}((v-u_{0}^{\epsilon})\circ\theta^{l})\cdot n\|_{H^{-\frac{1}{2}}(\partial B^{+})}^{2} \nonumber\\
		\leq & C\sum_{l}(\|\bar{\partial}^{2}((v-u_{0}^{\epsilon})\circ\theta^{l})\|_{L^2(B^{+})}^{2}+\|\divop\bar{\partial}((v-u_{0}^{\epsilon})\circ\theta^{l})\|_{L^2(B^{+})}).
	\end{align}
	
	The first term has been estimated in Proposition \ref{lem4.7}. For the second term, since $\divop \ u_{0}^{\epsilon}=0$ and $\divop v=-(A_{ji}-\delta_{ji})\partial_{j}v_{i}$, by using Lemma \ref{lem4.1}, we have
	\begin{align}\label{5.3}
		&\|\divop\bar{\partial}((v-u_{0}^{\epsilon})\circ\theta_{l})\|_{L^2(B^{+})}^{2}=\|\bar{\partial}\divop((v-u_{0}^{\epsilon})\circ\theta_{l})\|_{L^2(B^{+})}^{2} \nonumber\\
		\le&\|\bar{\partial}(A_{ji}-\delta_{ji})\partial_{j}(v_{i}\circ\theta)\|_{L^2(B^{+})}^{2}+\|(A_{ji}-\delta_{ji})\bar{\partial}\partial_{j}(v_{i}\circ\theta)\|_{L^2(B^{+})}^{2} \nonumber\\
		\le&\sqrt{t}\mathcal{P}(\mathcal{M}_{0}).
	\end{align}
	By \eqref{5.1}-\eqref{5.3},  we have
	\begin{align}\label{5.4}
		\max_{x\in\partial\Omega^{\epsilon}}|(v(t,x)-u_{0}^{\epsilon})\cdot\mathcal{N}^{\epsilon}|\le t^{\frac{1}{4}}\mathcal{P}(\mathcal{M}_{0}).
	\end{align}
	Recall in the graph that the unit normal $\mathcal{N}^{\epsilon}$ at the points $X_{+}^{\epsilon}=(0,0,\epsilon)$ and $X_{-}=(0,0,0)$ is vertical, so by definition of $u_{0}^{\epsilon}$, we get
	\begin{align}\label{5.5}
		u_{0}^{\epsilon}(X_{+}^{\epsilon})\cdot\mathcal{N}^{\epsilon}=1,
	\end{align}
	\begin{align}\label{5.6}
		u_{0}^{\epsilon}(X_{-})\cdot \mathcal{N}^{\epsilon}=0,
	\end{align}
	and
	\begin{align}\label{5.7}
		|X_{+}^{\epsilon}-X_{-}|=\epsilon.
	\end{align}
	We choose $\epsilon$ so small that $10\epsilon<T$ where $[0,T]$ is the time interval of existence which is independent of $\epsilon$, and since $X_{+}^{\epsilon}\cdot e_{3}=\epsilon$, and
	\begin{align}\label{5.8}
		\eta(t,X_{+}^{\epsilon})\cdot e_{3}=\epsilon+\int_{0}^{t}v_{3}(s,X_{+}^{\epsilon})ds,
	\end{align}
	for $t=10\epsilon$, by \eqref{5.4} we get
	\begin{align}\label{5.9}
		v_{3}(s,X_{+}^{\epsilon})<-1+s^{\frac{1}{4}}\mathcal{P}(\mathcal{M}_{0}).
	\end{align}
	Thus,  we have
	\begin{align}\label{5.10}
		\begin{aligned}
			\eta(10\epsilon,X_{+}^{\epsilon})\cdot e_{3}\le&\epsilon+\int_{0}^{10\epsilon}(-1+s^{\frac{1}{4}}\mathcal{P}(\mathcal{M}_{0}))ds \\
			\le&-9\epsilon+\frac{4}{5}(10\epsilon)^{\frac{5}{4}}\mathcal{P}(\mathcal{M}_{0}).
		\end{aligned}
	\end{align}
	Let $Z$ denote any point on $\partial\omega_{-}\cap\{x_{3}=0\}$. Since $u_{0}^{\epsilon}\cdot \mathcal{N}_{\epsilon}=0$, $\eta(10\epsilon,Z)=\int_{0}^{10\epsilon}v(s,Z)ds$, we have
	\begin{align}\label{5.11}
		v_{3}(s,x)\ge-s^{\frac{1}{4}}\mathcal{P}(\mathcal{M}_{0}).
	\end{align}
	So we get
	\begin{align}\label{5.12}
		\eta(10\epsilon,Z)\cdot e_{3}\ge-\frac{4}{5}(10\epsilon)^{\frac{5}{4}}\mathcal{P}(\mathcal{M}_{0}),
	\end{align}
	we choose $\epsilon>0$ sufficiently small so that $\frac{8}{5}(10\epsilon)^{\frac{5}{4}}\mathcal{P}(\mathcal{M}_{0})<9\epsilon$, i.e. we can choose $\epsilon<(\frac{45}{8})^{4}(10)^{-5}\frac{1}{\mathcal{P}^{4}(\mathcal{M}_{0})}$, then it follows that
	\begin{align}\label{5.13}
		\eta(10\epsilon,X_{+}^{\epsilon})\cdot e_{3}<\eta(10\epsilon,Z)\cdot e_{3}.
	\end{align}
	In conclusion, we know that at $t=0$, $X_{+}^{\epsilon}$ is exactly above $Z$. However, at $t=10\epsilon$, $\eta(10\epsilon,X_{+}^{\epsilon})$ is below $\eta(10\epsilon,\partial\omega_{-}\cap\{x_{3}=0\})$. So by continuity there must exist a time $0<T^{*}<10\epsilon$ and $Z\in\partial\omega_{-}\cap\{x_{3}=0\}$ such that $\eta(T^{*},X_{+}^{\epsilon})=\eta(T^{*},Z)$, yielding a self-intersection of boundary.

	Finally, we show the boundary smoothness throughout $[0, T^*)$ with singularity confined to $t = T^*$.  We claim that for the solution $(\eta^\epsilon, v^\epsilon, F^\epsilon)$ to system \eqref{2.3} with initial data from Section \ref{sec3}: $\forall t \in [0, T^*)$, $\partial\Omega^\epsilon(t)$ remains a $C^\infty$ hypersurface.
	
	In fact, the uniform bound $\sup_{t \in [0,T]} E^\epsilon(t) \leq \mathcal{Q}(\mathcal{M}_0)$ (Proposition \ref{prop4.5}) implies
	\begin{align}
		\sup_{t \in [0,T^*)} \|\eta^\epsilon(t)\|_{H^3(\Omega^\epsilon)} &\leq C(\mathcal{M}_0), \label{eq:b1} \\
		\int_0^{T^*} \|\nabla v^\epsilon(s)\|_{H^2(\Omega^\epsilon)}^2 ds &\leq C(\mathcal{M}_0). \label{eq:b2}
	\end{align}
	For any $t_0 \in [0, T^*)$, take $\delta = T^* - t_0 > 0$. By Morrey's inequality in $\mathbb{R}^3$, we have
	\begin{equation}
		\|\nabla \eta^\epsilon(t_0)\|_{C^{0,1/2}(\overline{\Omega^\epsilon})} \leq C \|\eta^\epsilon(t_0)\|_{H^3(\Omega^\epsilon)}. \label{eq:b3}
	\end{equation}
	The coordinate charts $(\theta^\epsilon)^l$ from Section \ref{sssec.222} maintain equivalent norms, so $\partial\Omega^\epsilon(t_0) = \eta^\epsilon(t_0, \partial\Omega^\epsilon)$ inherits $C^{1,1/2}$ regularity. Bootstrap via tangential flow gives
	\begin{equation}
		\partial_t \bar{\partial}^2 \eta^\epsilon = \bar{\partial}^2 v^\epsilon \in L^2(0,T^*; H^1(\Omega^\epsilon)) \label{eq:b4}
	\end{equation}
	by Proposition \ref{prop4.2}. Thus $\bar{\partial}^2 \eta^\epsilon \in C^{0,1/2}([0,T^*] \times \partial\Omega^\epsilon)$ by Sobolev embedding, implying $C^\infty$ smoothness via induction on higher derivatives.
\end{proof}

\appendix

\section{Notations and technical preliminaries}\label{app:notations}

\subsection{Tangential Derivatives}
For boundary charts $\theta_l: B(0,1) \to U_l \subset \partial\Omega$ with $B^+ = B \cap \{y_3 > 0\}$ and $B^0 = \overline{B} \cap \{y_3 = 0\}$, the tangential derivative operator is defined as:
\begin{align*}
	\bar{\partial}_\alpha f &:= \left( \frac{\partial}{\partial y_\alpha} [f \circ \theta_l] \right) \circ \theta_l^{-1} \\
	&= \left\langle (\nabla f) \circ \theta_l, \frac{\partial \theta_l}{\partial y_\alpha} \right\rangle \circ \theta_l^{-1}, \quad \alpha = 1,2.
\end{align*}
On the boundary portion $B^0$, $\bar{\partial} = (\bar{\partial}_1, \bar{\partial}_2)$ reduces to the horizontal derivative in chart coordinates.

\subsection{Sobolev spaces on domains}
For $\Omega \subset \mathbb{R}^3$ a bounded domain, the Sobolev space $H^k(\Omega)$ for $k \in \mathbb{N}$ is defined as the completion of $C^\infty(\overline{\Omega})$ under:
\[
\|f\|_{H^k(\Omega)}^2 := \sum_{|\alpha| \leq k} \| \partial^\alpha f \|_{L^2(\Omega)}^2,
\]
where $\alpha = (\alpha_1,\alpha_2,\alpha_3)$ is a multi-index with $|\alpha| = \alpha_1 + \alpha_2 + \alpha_3$. For real $s \geq 0$, $H^s(\Omega)$ is defined by complex interpolation.

\subsection{Sobolev spaces on surfaces}
For a $C^\infty$ surface $\Gamma = \partial\Omega$, the Sobolev space $H^k(\Gamma)$ for $k \in \mathbb{N}$ is the completion of $C^\infty(\Gamma)$ under:
\[
\|f\|_{H^k(\Gamma)}^2 := \sum_{|\beta| \leq k} \| \bar{\partial}^\beta f \|_{L^2(\Gamma)}^2,
\]
where $\beta = (\beta_1,\beta_2)$ is a tangential multi-index. For real $s \geq 0$, $H^s(\Gamma)$ is defined by interpolation. Negative-order spaces are defined via duality:
\[
H^{-s}(\Gamma) := \left( H^s(\Gamma) \right)', \quad s > 0
\]
with norm $\|f\|_{H^{-s}(\Gamma)} = \sup\limits_{\substack{g \in H^s(\Gamma) \\ g \neq 0}} \frac{|\langle f, g \rangle|}{\|g\|_{H^s(\Gamma)}}$.

\subsection{Technical Lemmas for Elliptic Systems}

\begin{lemma}[Stokes Estimates on $\Omega^\epsilon$]\label{lemB.1}
	For integer $k \geq 3$, let $f \in H^{k-2}(\Omega^\epsilon)$, $\phi \in H^{k-1}(\Omega^\epsilon)$, and $g \in H^{k-\frac{1}{2}}(\partial\Omega^\epsilon)$ satisfy the compatibility condition $\int_{\Omega^\epsilon} \phi  dx = \int_{\partial\Omega^\epsilon} g \cdot \mathcal{N}^\epsilon dS$. The Stokes system:
	\begin{align}\label{B.1}
		\begin{cases}
			-\Delta u + \nabla p = f & \text{in } \Omega^\epsilon, \\
			\operatorname{div} u = \phi & \text{in } \Omega^\epsilon, \\
			u = g & \text{on } \partial\Omega^\epsilon,
		\end{cases}
	\end{align}
	admits a unique solution $(u, p) \in H^k(\Omega^\epsilon) \times H^{k-1}(\Omega^\epsilon)/\mathbb{R}$ satisfying:
	\begin{align}\label{B.2}
		\|u\|_{H^k(\Omega^\epsilon)} + \|p\|_{H^{k-1}(\Omega^\epsilon)} \leq C \left( \|f\|_{H^{k-2}(\Omega^\epsilon)} + \|\phi\|_{H^{k-1}(\Omega^\epsilon)} + \|g\|_{H^{k-\frac{1}{2}}(\partial\Omega^\epsilon)} \right),
	\end{align}
	where $C > 0$ depends only on $\Omega$ and is independent of $\epsilon$.
\end{lemma}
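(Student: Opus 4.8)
The plan is to establish the a priori estimate \eqref{B.2} by localizing to the coordinate charts of Section \ref{sssec.222}, carefully tracking constants to confirm $\epsilon$-independence, and then to deduce existence and uniqueness by the classical Galerkin-plus-elliptic-regularity scheme.

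First I would dispose of existence and uniqueness on each fixed $\Omega^\epsilon$. Since $\Omega^\epsilon$ is a bounded $C^\infty$ domain, the classical stationary Stokes theory applies: using a right inverse of the divergence (Bogovskii operator) and an $H^k$-extension of $g$ carrying the prescribed flux, one reduces \eqref{B.1} to a problem with zero divergence and zero boundary data, solves it by Lax--Milgram on divergence-free $H^1_0$ fields, recovers the pressure via de Rham, and bootstraps by elliptic regularity to $(u,p)\in H^k(\Omega^\epsilon)\times H^{k-1}(\Omega^\epsilon)/\mathbb{R}$ (see Temam, Galdi, or the Agmon--Douglis--Nirenberg/Solonnikov estimates). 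Uniqueness is immediate: the difference of two solutions solves the homogeneous system, the energy identity forces $u\equiv 0$, and then $\nabla p\equiv 0$. Here the compatibility hypothesis $\int_{\Omega^\epsilon}\phi\,dx=\int_{\partial\Omega^\epsilon}g\cdot\mathcal N^\epsilon\,dS$ is precisely what renders the divergence equation solvable.

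The heart of the argument is the $\epsilon$-uniformity of \eqref{B.2}, and I would obtain it as follows. Cover $\Omega^\epsilon$ by the atlas $\{(\theta^\epsilon)^l\}_{l=1}^L$ and the subordinate cut-offs $\{\xi^{\epsilon,l}\}$ of Section \ref{sssec.222}. Since each $(\theta^\epsilon)^l$ differs from $\theta^l$ only by a translation, the identity, or the affine vertical stretch $H^\epsilon$ --- whose Jacobian $\frac{h+3+3\epsilon}{h}$ lies in a fixed compact subinterval of $(0,\infty)$ for $\epsilon\in(0,1)$ --- the maps $(\theta^\epsilon)^l$, their inverses, and all derivatives up to order $k$ are bounded uniformly in $\epsilon$, while $\det\nabla(\theta^\epsilon)^l$ is a positive constant uniformly away from $0$ and $\infty$; moreover the number of charts $L$ is inherited unchanged from $\Omega$, hence $\epsilon$-independent. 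Pulling \eqref{B.1} back by each chart yields, on the model half-ball $B^+$ (respectively the ball $B$ for interior charts), a uniformly elliptic Stokes-type system with variable coefficients built from $[\nabla(\theta^\epsilon)^l]^{-1}$ that are $\epsilon$-uniformly bounded in $C^{k-1}$; the flat local ADN/Solonnikov estimates then control $\|\xi^{\epsilon,l}u\|_{H^k}+\|\xi^{\epsilon,l}(p-\bar p_l)\|_{H^{k-1}}$ by the right-hand side of \eqref{B.2} plus the lower-order terms $\|u\|_{H^{k-1}(\Omega^\epsilon)}+\|p\|_{H^{k-2}(\Omega^\epsilon)}$ arising from commutators of $\Delta$ and $\nabla$ with $\xi^{\epsilon,l}$, with constants independent of $\epsilon$. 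Summing over $l$ --- using that $\sum_l(\xi^{\epsilon,l})^2$ stays bounded below by a positive $\epsilon$-independent constant (renormalizing the cut-offs if necessary, since the mild drop in the stretched neck noted in Section \ref{sssec.222} is still bounded below with $\epsilon$-uniformly bounded derivatives) --- gives \eqref{B.2} up to those lower-order terms, which I would absorb by interpolation $\|u\|_{H^{k-1}}\le\delta\|u\|_{H^k}+C_\delta\|u\|_{L^2}$ (and analogously for $p$ modulo constants) together with the energy bound $\|u\|_{L^2}+\|p\|_{L^2/\mathbb{R}}\le C(\|f\|_{H^{k-2}}+\|\phi\|_{H^{k-1}}+\|g\|_{H^{k-1/2}(\partial\Omega^\epsilon)})$, obtained by testing the momentum equation against $u$ and applying Friedrichs/Poincar\'e and Ne\v{c}as' inequality $\|p\|_{L^2/\mathbb{R}}\le C\|\nabla p\|_{H^{-1}}$.

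The main obstacle is precisely the $\epsilon$-uniformity of this last absorption: one must know that interpolation, the energy estimate, Poincar\'e, Ne\v{c}as, and --- in the first step --- the Bogovskii right inverse of the divergence all hold on the entire family $\{\Omega^\epsilon\}$ with constants that do not blow up as $\epsilon\to 0$. This reduces to a purely geometric fact about the construction of Section \ref{sec3}: each $\Omega^\epsilon$ decomposes into finitely many pieces that are star-shaped with respect to balls of $\epsilon$-uniform radius (equivalently, $\{\Omega^\epsilon\}$ is a family of John domains with $\epsilon$-uniform John constant), which is visible from Definition \ref{defA.2} and makes all the geometric constants comparable to those of the fixed domain $\Omega$. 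Once this is verified, everything else is routine bookkeeping, and the resulting constant in \eqref{B.2} depends only on $\Omega$.
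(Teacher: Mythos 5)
The paper does not actually prove Lemma~\ref{lemB.1}; its entire ``proof'' is the citation ``See Lemma~2 in \cite{CS5}.'' So there is no in-paper argument to compare yours against line by line. Judged on its own merits, your sketch is a correct reconstruction of the localization argument that a proof of this kind must use (and that Coutand--Shkoller in fact use). The existence/uniqueness step for a fixed $\Omega^\epsilon$ is standard; the decisive content is the $\epsilon$-uniformity, and your route --- pull back by the atlas $\{(\theta^\epsilon)^l\}$ of Section~\ref{sssec.222}, observe that $H^\epsilon$ is an affine stretch with Jacobian $\frac{h+3+3\epsilon}{h}$ uniformly in a compact interval, apply flat Stokes regularity with $\epsilon$-uniformly bounded coefficients, sum over an $\epsilon$-independent number of charts, then absorb lower-order terms by interpolation and an $L^2$ energy bound --- is exactly the right structure.

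The one place where your argument asserts more than it verifies is the claim that the family $\{\Omega^\epsilon\}$ has $\epsilon$-uniform John (equivalently, interior cone) constants, which is what keeps the Bogovskii, Ne\v{c}as, Poincar\'e, and interpolation constants uniform in the absorption step. This is indeed true here, but the reason deserves a sentence: the $\epsilon$-degeneracy in Definition~\ref{defA.2} is \emph{exterior} to $\Omega^\epsilon$ --- the gap of width $\epsilon$ between the south pole $X^\epsilon_+$ and the base top $\{x_3=0\}$ lies outside the fluid domain --- so the interior of $\Omega^\epsilon$ never develops a thin passage, and the cone/star-shapedness radii stay comparable to those of $\Omega$. (The only modification of the interior geometry is the vertical stretch of the neck, which is a bi-Lipschitz map with uniform constants.) Similarly, your renormalization of $\xi^{\epsilon,l}$ to fix the partition-of-unity defect in the stretched region requires a positive $\epsilon$-uniform lower bound on $\sum_l(\xi^{\epsilon,l})^2$, which again follows from the uniform chart geometry, but should be stated as a hypothesis being verified rather than taken for granted. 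With those two points made explicit, the proposal is a complete, self-contained proof of the lemma and in that sense is more informative than the paper's bare citation.
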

\begin{proof}
	See Lemma 2 in \cite{CS5}.
\end{proof}

\begin{lemma}[Estimates for div-curl System]\label{lemB.2}
	Let $\Omega \subset \mathbb{R}^3$ be a simply connected bounded domain with $H^{k+1}$-boundary ($k > 3/2$). Given:
	\begin{itemize}
		\item $F \in [H^{m-1}(\Omega)]^3$ with $\operatorname{div} F = 0$ and $\int_\Gamma F \cdot \mathcal{N} dS = 0$ for each boundary component $\Gamma$ of $\partial \Omega$,
		\item $g \in H^{m-1}(\Omega)$ and $h \in H^{m-\frac{1}{2}}(\partial\Omega)$ satisfying $\int_{\partial\Omega} h dS = \int_\Omega g dx$,
	\end{itemize}
	for $1 \leq m \leq k$, the system:
	\begin{align}\label{B.3}
		\begin{cases}
			\operatorname{curl} v = F & \text{in } \Omega, \\
			\operatorname{div} v = g & \text{in } \Omega, \\
			v \cdot \mathcal{N} = h & \text{on } \partial\Omega,
		\end{cases}
	\end{align}
	admits a unique solution $v \in H^m(\Omega)$ with estimate:
	\begin{align}\label{B.4}
		\|v\|_{H^m(\Omega)} \leq C |\partial\Omega|_{H^{k+\frac{1}{2}}} \left( \|F\|_{H^{m-1}(\Omega)} + \|g\|_{H^{m-1}(\Omega)} + \|h\|_{H^{m-\frac{1}{2}}(\partial\Omega)} \right),
	\end{align}
	where $|\partial\Omega|_{H^{k+\frac{1}{2}}}$ denotes the $H^{k+\frac{1}{2}}$-norm of the boundary parametrization.
\end{lemma}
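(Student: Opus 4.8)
The plan is to build $v$ by superposing a divergence-free vector potential that carries the prescribed curl with a gradient that corrects the divergence and the normal trace, and to obtain \eqref{B.4} from the classical elliptic estimate for the Hodge-type div-curl operator together with elliptic regularity for the Neumann Laplacian. First I would solve the auxiliary system
\begin{align*}
	\curl W = F, \qquad \divop W = 0 \ \text{ in } \Omega, \qquad W\cdot\mathcal{N} = 0 \ \text{ on } \partial\Omega.
\end{align*}
The hypotheses $\divop F = 0$, $\Omega$ simply connected, and $\int_\Gamma F\cdot\mathcal{N}\,dS = 0$ on each boundary component $\Gamma$ are exactly the solvability conditions for this problem, and they yield a unique $W\in H^m(\Omega)$ with $\|W\|_{H^m(\Omega)}\le C\,|\partial\Omega|_{H^{k+\frac12}}\|F\|_{H^{m-1}(\Omega)}$; this is the standard vector-potential construction (cf.\ \cite{Girault1986}, and the div-curl estimates used in \cite{Hao,HLZ}), which I would invoke directly rather than reprove.

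Next I would let $\phi$ solve the Neumann problem $\Delta\phi = g$ in $\Omega$ with $\partial_{\mathcal{N}}\phi = h$ on $\partial\Omega$; its solvability compatibility condition $\int_\Omega g\,dx = \int_{\partial\Omega} h\,dS$ is precisely the assumed hypothesis, and elliptic regularity on the $H^{k+1}$-domain gives $\|\nabla\phi\|_{H^m(\Omega)}\le C\,|\partial\Omega|_{H^{k+\frac12}}\big(\|g\|_{H^{m-1}(\Omega)}+\|h\|_{H^{m-\frac12}(\partial\Omega)}\big)$. Setting $v:=W+\nabla\phi$, one checks $\curl v = \curl W = F$, $\divop v = \divop W + \Delta\phi = g$, and $v\cdot\mathcal{N} = W\cdot\mathcal{N}+\partial_{\mathcal{N}}\phi = h$, so $v$ solves \eqref{B.3}, and the triangle inequality gives \eqref{B.4}. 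For uniqueness, if $v$ satisfies $\curl v = 0$, $\divop v = 0$, $v\cdot\mathcal{N} = 0$, then, $\Omega$ being simply connected, $v = \nabla\psi$ for a harmonic $\psi$ with $\partial_{\mathcal{N}}\psi = 0$; hence $\psi$ is constant and $v = 0$ (equivalently, the space of harmonic fields with vanishing normal trace is trivial on a simply connected domain).

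The delicate point --- and the reason the boundary norm $|\partial\Omega|_{H^{k+\frac12}}$ appears explicitly --- is the dependence of the elliptic constants on the regularity of $\partial\Omega$. I would make this precise by localizing with a finite atlas of boundary charts, flattening the boundary, and tracking that the coefficients of the straightened div-curl and Neumann operators are controlled by the chart derivatives, i.e.\ by $|\partial\Omega|_{H^{k+\frac12}}$, while the interior pieces contribute only $\Omega$-dependent constants; the Norm Equivalence Lemma of \cite{CS2} then reassembles the global estimate. The main obstacle is thus bookkeeping rather than conceptual: one must verify that both the vector-potential estimate and the Neumann estimate close with constants depending only on $|\partial\Omega|_{H^{k+\frac12}}$, so that when the lemma is applied to the domain sequence $\Omega^\epsilon$ --- for which $|\partial\Omega^\epsilon|_{H^{k+\frac12}}\le C|\partial\Omega|_{H^{k+\frac12}}$ --- the resulting constants are uniform in $\epsilon$, as needed for the estimate \eqref{3.8}.
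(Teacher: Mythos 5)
The paper offers no proof of its own for this lemma: it is stated as a black-box citation to Theorem~1.1 of \cite{CS1}. Your proposal, by contrast, gives a genuine constructive argument, so the two ``routes'' are not comparable line-by-line --- you are in effect reconstructing the proof strategy behind the cited result. Your decomposition $v = W + \nabla\phi$ is correct: the auxiliary problem $\curl W = F$, $\divop W = 0$, $W\cdot\mathcal{N} = 0$ is solvable exactly under $\divop F = 0$ and the flux conditions, and uniquely so on a simply connected domain; the Neumann problem absorbs $g$ and $h$ with the stated compatibility; and the superposition and uniqueness checks all go through (the latter because simply connectedness kills the space of curl-free, divergence-free fields with vanishing normal trace). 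That is a sound skeleton.

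The one place you sell the work short is the ``delicate point'' you flag at the end. Establishing that the elliptic constants in both the vector-potential estimate and the Neumann estimate depend \emph{only} on $|\partial\Omega|_{H^{k+1/2}}$ --- uniformly at the threshold $k > 3/2$, for domains whose boundary regularity is merely Sobolev rather than $C^\infty$ --- is not mere bookkeeping after flattening charts. Once you flatten, the straightened operators have variable $H^{k-1/2}$-class coefficients, and getting tame estimates out of variable-coefficient elliptic theory at that low regularity (including the product estimates for terms like $\nabla\theta^l\otimes\nabla\theta^l$ times a derivative of $v$, which live at the borderline of Sobolev multiplication) is precisely the technical content of \cite{CS1} and occupies most of that paper. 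So the decomposition is right, but the statement that the remaining difficulty is ``bookkeeping rather than conceptual'' materially undersells where the effort actually goes; a complete blind proof would need to reproduce the Sobolev-class elliptic regularity machinery of \cite{CS1} (or \cite{CS2}), not just invoke a Norm Equivalence Lemma to reassemble the pieces. If instead one is content, as the paper is, to cite \cite{CS1} for the constant-tracking, then your construction is a clean and correct way to organize the result.
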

\begin{proof}
	See Theorem 1.1 in \cite{CS1}.
\end{proof}

\subsection{Sobolev embeddings and trace theorems}

\begin{lemma}[Sobolev Embedding on Domains]\label{lemB.3}
	For $s > 3/2$, there exists $C(\Omega) > 0$ such that:
	\begin{align}\label{B.5}
		\|u\|_{L^\infty(\Omega^\epsilon)} \leq C \|u\|_{H^s(\Omega^\epsilon)} \quad \forall u \in H^s(\Omega^\epsilon),
	\end{align}
	with $C$ independent of $\epsilon$.
\end{lemma}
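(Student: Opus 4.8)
The plan is the standard localization argument; the only genuine work is to track that every constant can be taken independent of $\epsilon$. First I would cover $\Omega^{\epsilon}$ by the fixed finite family of charts $\{(\theta^{\epsilon})^{l}\}_{l=1}^{L}$ constructed in Section~\ref{sec2}, pull $u$ back chart by chart to the model sets $B^{+}$ (boundary charts) or $B$ (interior charts), apply the elementary Sobolev embedding $H^{s}(B^{+})\hookrightarrow L^{\infty}(B^{+})$ and $H^{s}(B)\hookrightarrow L^{\infty}(B)$, valid for $s>3/2$ with a universal constant (Morrey for $3/2<s<5/2$, trivial for $s\ge 5/2$), and then transfer the $H^{s}$ estimate back to $\Omega^{\epsilon}$. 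Since $L$ is fixed and the transfer constants are $\epsilon$-independent, this yields \eqref{B.5}.

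The substance is the following uniformity input, immediate from the construction in Section~\ref{sec2}: each $(\theta^{\epsilon})^{l}$ is either the fixed chart $\theta^{l}$, its vertical translate $\theta^{l}-(1-\epsilon)e_{3}$, or $H^{\epsilon}\circ\theta^{l}$ with $H^{\epsilon}$ affine and $\det\nabla H^{\epsilon}=\tfrac{h+3+3\epsilon}{h}\in[\tfrac{h+3}{h},\tfrac{h+6}{h}]$ for $0<\epsilon<1$; in all three cases the $C^{k}$ norms of $(\theta^{\epsilon})^{l}$ and of $[(\theta^{\epsilon})^{l}]^{-1}$, together with the upper and lower bounds on $\det\nabla(\theta^{\epsilon})^{l}$ (which equals $C^{l}$ or $\tfrac{h+3+3\epsilon}{h}C^{l}$), are controlled by quantities depending only on the fixed charts and on $h$. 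Likewise the cut-offs $\xi^{\epsilon,l}$, characterized by $\xi^{\epsilon,l}\circ(\theta^{\epsilon})^{l}=\xi^{l}\circ\theta^{l}$, satisfy $\sum_{l}\xi^{\epsilon,l}\ge c_{0}>0$ on $\overline{\Omega^{\epsilon}}$ with $c_{0}$ independent of $\epsilon$ (equal to $1$ outside the stretched neck, and bounded below there by an $\epsilon$-independent positive constant since, by the explicit form of $g^{\epsilon}$, the vertical stretch factor stays in a fixed compact subinterval of $(0,\infty)$). Given $x\in\Omega^{\epsilon}$, pick $l$ with $\xi^{\epsilon,l}(x)\ge c_{0}/L$ and set $y=[(\theta^{\epsilon})^{l}]^{-1}(x)$; then
\begin{align*}
	\tfrac{c_{0}}{L}\,|u(x)| \le \big\|(\xi^{\epsilon,l}u)\circ(\theta^{\epsilon})^{l}\big\|_{L^{\infty}} \le C\big\|(\xi^{\epsilon,l}u)\circ(\theta^{\epsilon})^{l}\big\|_{H^{s}},
\end{align*}
and by the chain rule together with the uniform chart bounds (equivalently, the Norm Equivalence Lemma of \cite{CS2}) the last norm is $\le C\|\xi^{\epsilon,l}u\|_{H^{s}(\Omega^{\epsilon})}\le C\|u\|_{H^{s}(\Omega^{\epsilon})}$ with $C$ independent of $\epsilon$. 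Taking the supremum over $x\in\Omega^{\epsilon}$ gives the claim.

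The only point that is not completely routine is the assertion that the chart and partition data stay uniformly controlled as $\epsilon\downarrow 0$ even though $\Omega^{\epsilon}$ develops a pinched neck near $X_{+}^{\epsilon}$; this is precisely why the Coutand--Shkoller atlas is arranged so that the degenerating region is covered by translates of fixed charts (isometries), so that no geometric constant blows up. Once that structural fact --- already recorded in Section~\ref{sec2} --- is granted, there is no further obstacle, and the identical localization scheme also yields the $\epsilon$-independent trace and higher Sobolev embeddings invoked elsewhere in the paper.
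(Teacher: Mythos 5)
The paper states this lemma in the appendix without any proof (unlike its neighbors \ref{lemB.1}, \ref{lemB.2}, and \ref{lem.korn}, which carry citations); it is implicitly deferred to the Coutand--Shkoller framework of \cite{CS5}. Your reconstruction is correct and is essentially the argument that \cite{CS5} uses for its own version of this statement: localize through the explicitly constructed atlas for $\Omega^{\epsilon}$, observe that each $(\theta^{\epsilon})^{l}$ is the identity, a vertical translation, or a mild affine stretch of a fixed chart with Jacobian $\tfrac{h+3+3\epsilon}{h}$ uniformly bounded above and below, invoke the model embedding $H^{s}(B^{+})\hookrightarrow L^{\infty}(B^{+})$, and transfer back using the norm-equivalence supplied by the uniform chart bounds.

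One remark on the substance, not to flag an error but to tighten a step. The only place your proof is not fully self-contained is the assertion $\sum_{l}\xi^{\epsilon,l}\ge c_{0}>0$ on $\overline{\Omega^{\epsilon}}$ with $c_{0}$ independent of $\epsilon$. Your stated justification --- that the vertical stretch factor $(g^{\epsilon})'=\tfrac{h}{h+3+3\epsilon}$ lies in a fixed compact subinterval of $(0,\infty)$ --- is necessary but not sufficient; a bounded stretch alone does not rule out a gap opening between the supports of the $I_{1}$-, $I_{2}$-, and $I_{3}$-families after the three different modifications are applied. The lower bound holds, but it relies on the specific geometric arrangement in the Coutand--Shkoller atlas (the overlap regions are designed so that whenever the $I_{1}$ cut-offs are pushed off a point, the shifted $I_{3}$ or unchanged $I_{2}$ cut-offs have already taken over with a quantitative margin); this is verified in \cite{CS5} and should be cited rather than deduced from the stretch factor alone. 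Alternatively, you can sidestep the partition-sum issue entirely: since the images $(\theta^{\epsilon})^{l}(B^{+})$ (resp.\ $(\theta^{\epsilon})^{l}(B)$) cover $\Omega^{\epsilon}$ by construction, given $x\in\Omega^{\epsilon}$ simply choose $l$ with $x\in(\theta^{\epsilon})^{l}(B^{+})$, set $y=[(\theta^{\epsilon})^{l}]^{-1}(x)$, and estimate
\begin{align*}
	|u(x)| \le \|u\circ(\theta^{\epsilon})^{l}\|_{L^{\infty}(B^{+})} \le C\|u\circ(\theta^{\epsilon})^{l}\|_{H^{s}(B^{+})} \le C\|u\|_{H^{s}(\Omega^{\epsilon})},
\end{align*}
with no cut-off at all. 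This uses only the uniform two-sided chart bounds you already recorded and avoids having to justify a lower bound on $\sum_{l}\xi^{\epsilon,l}$.
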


\begin{lemma}[Sobolev Embedding on Boundaries]\label{lemB.4}
	For $s > 1$, there exists $C(\Omega) > 0$ such that:
	\begin{align}\label{B.6}
		\|u\|_{L^\infty(\partial\Omega^\epsilon)} \leq C \|u\|_{H^s(\partial\Omega^\epsilon)} \quad \forall u \in H^s(\partial\Omega^\epsilon),
	\end{align}
	with $C$ independent of $\epsilon$.
\end{lemma}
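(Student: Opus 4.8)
\textbf{Proof proposal for Lemma \ref{lemB.4}.}

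The plan is to reduce the estimate to the classical Sobolev embedding $H^s(\mathbb{R}^2)\hookrightarrow L^\infty(\mathbb{R}^2)$ — valid precisely for $s>1$, i.e. for $s$ above half the dimension of the surface $\partial\Omega^\epsilon$ — via the atlas of boundary coordinate charts constructed in Section \ref{sssec.222}, while tracking that every constant produced is independent of $\epsilon$. The structural fact that makes this work is that each boundary chart $(\theta^\epsilon)^l$ of $\partial\Omega^\epsilon$ is obtained from the fixed, $\epsilon$-independent chart $\theta^l$ of $\partial\Omega$ by precomposition with an affine map $\Phi^\epsilon$ whose linear part is either the identity (translation charts, $l\in I_2\cup I_3$) or the diagonal stretch $\mathrm{diag}(1,1,\tfrac{h+3+3\epsilon}{h})$ (neck charts, $l\in I_1$); in all cases this linear part and its inverse are bounded uniformly for $0<\epsilon\ll1$, so that $D^k(\theta^\epsilon)^l$ is bounded uniformly in $\epsilon$ for every order $k$, and likewise for $((\theta^\epsilon)^l)^{-1}$. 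Consequently the induced metric obeys $c\,g^l_{\alpha\beta}\le g^\epsilon_{\alpha\beta}\le C\,g^l_{\alpha\beta}$ as quadratic forms, the area density $(g^\epsilon)^{1/2}$ is comparable to $(g^l)^{1/2}$, and each $(\theta^\epsilon)^l$ is bi-Lipschitz with $\epsilon$-uniform constants.

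First I would fix $x_0\in\partial\Omega^\epsilon$ and pick, from the $\epsilon$-uniform atlas, a chart $(\theta^\epsilon)^l$ with $x_0\in(\theta^\epsilon)^l(B^0_{1/2})$, where $B^0_{1/2}=\overline{B(0,1/2)}\cap\{y_3=0\}$; here I assume, as one may arrange in the construction, that the half-size chart images already cover $\partial\Omega^\epsilon$ (in the vertically stretched neck $\omega^\epsilon\cap\{1-\epsilon+h/3<x_3<2+2h/3\}$ — a piece of the fixed cylinder $\{x_1^2+x_2^2=1\}$ of $\epsilon$-uniformly bounded height — one supplements by finitely many $\epsilon$-independent cylindrical charts, exactly as in \cite{CS5}). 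Let $\tilde u:=u\circ(\theta^\epsilon)^l$, a function on $B^0\subset\mathbb{R}^2$, and fix once and for all (independently of $\epsilon$) a cutoff $\zeta\in C_c^\infty(B^0)$ with $\zeta\equiv1$ on $B^0_{1/2}$. Extending $\zeta\tilde u$ by zero to $\mathbb{R}^2$, the classical embedding gives
$|u(x_0)|=|(\zeta\tilde u)(((\theta^\epsilon)^l)^{-1}x_0)|\le\|\zeta\tilde u\|_{L^\infty(\mathbb{R}^2)}\le C_s\|\zeta\tilde u\|_{H^s(\mathbb{R}^2)}$ with $C_s$ an absolute constant.

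Next I would bound $\|\zeta\tilde u\|_{H^s(\mathbb{R}^2)}$ by $\|u\|_{H^s(\partial\Omega^\epsilon)}$ with an $\epsilon$-independent constant. Since $\zeta$ is a fixed multiplier, $\|\zeta\tilde u\|_{H^s(\mathbb{R}^2)}\le C(\zeta)\|\tilde u\|_{H^s(B^0)}$, and it remains to control $\|u\circ(\theta^\epsilon)^l\|_{H^s(B^0)}$. For integer $s=m$ this follows from the chain rule: $\partial^\beta_y(u\circ(\theta^\epsilon)^l)$ is a sum of terms $((\bar\partial^\gamma u)\circ(\theta^\epsilon)^l)$ with $|\gamma|\le|\beta|$, multiplied by products of derivatives of $(\theta^\epsilon)^l$ of order $\le|\beta|$, all uniformly bounded in $\epsilon$; combined with the area-density comparison $\int_{B^0}|w\circ(\theta^\epsilon)^l|^2\,dy\le C\int_{(\theta^\epsilon)^l(B^0)}|w|^2\,dS$ this yields $\|u\circ(\theta^\epsilon)^l\|_{H^m(B^0)}\le C\|u\|_{H^m(\partial\Omega^\epsilon)}$ with $C$ independent of $\epsilon$. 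For non-integer $s$ I would interpolate: the pullback-and-cutoff map $u\mapsto\zeta\,(u\circ(\theta^\epsilon)^l)$ is bounded, uniformly in $\epsilon$, from $H^m(\partial\Omega^\epsilon)$ to $H^m(\mathbb{R}^2)$ for every integer $m$, and since both scales are defined by interpolation (Appendix \ref{app:notations}), the same map is bounded with the same $\epsilon$-uniform norm from $H^s(\partial\Omega^\epsilon)$ to $H^s(\mathbb{R}^2)$ for every real $s\ge0$. Chaining the three inequalities gives $|u(x_0)|\le C\|u\|_{H^s(\partial\Omega^\epsilon)}$ with $C$ depending only on $s$, on the fixed reference data, and on $\Omega$ — not on $\epsilon$ — and taking the supremum over $x_0$ finishes the proof.

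The main obstacle is not the embedding, which is classical, but the uniformity in $\epsilon$: one must be sure that the atlas, the associated partition of unity, and the equivalence between the intrinsic $H^s(\partial\Omega^\epsilon)$ norm and its chartwise description all come with constants that do not blow up as the neck collapses. The only delicate point is the vertically stretched neck region, where — as noted in Section \ref{sssec.222} — the natural partition of unity may fail to sum to $1$; this is resolved because that region is a piece of a fixed cylinder of $\epsilon$-uniformly bounded size, hence coverable by $\epsilon$-independent charts, after which all comparison constants are manifestly $\epsilon$-uniform. For the present $L^\infty$ estimate this difficulty is in fact largely circumvented, since it suffices to localize near the single point $x_0$ using one chart and one fixed cutoff, never reassembling $u$ from a partition of unity.
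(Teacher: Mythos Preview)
The paper states Lemma \ref{lemB.4} without proof, listing it among the technical preliminaries in the appendix; it is treated as a standard fact whose $\epsilon$-uniformity is inherited from the chart construction of Section \ref{sssec.222} (and ultimately from \cite{CS5}). Your proposal supplies exactly the argument one would expect to justify it: localize to a single boundary chart, pull back to the flat disc $B^0\subset\mathbb{R}^2$, apply the classical embedding $H^s(\mathbb{R}^2)\hookrightarrow L^\infty$ for $s>1$, and push the estimate back using the $\epsilon$-uniform bi-Lipschitz bounds on the charts $(\theta^\epsilon)^l$. The key structural observation --- that each $(\theta^\epsilon)^l$ differs from the fixed $\theta^l$ by an affine map whose linear part is either the identity or $\mathrm{diag}(1,1,\tfrac{h+3+3\epsilon}{h})$, hence has all derivatives bounded uniformly in $\epsilon$ --- is correct and is precisely what underlies the paper's implicit claim that the constant depends only on $\Omega$. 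Your handling of the stretched neck region and the interpolation to non-integer $s$ are both sound. In short: the paper omits the proof, and your proposal is correct and is the standard argument.
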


\begin{lemma}[Trace Theorem]\label{lemB.5}
	For $s \in (\frac{1}{2}, 3]$, there exists $C(\Omega) > 0$ such that:
	\begin{align}\label{B.7}
		\|u\|_{H^{s-\frac{1}{2}}(\partial\Omega^\epsilon)} \leq C \|u\|_{H^s(\Omega^\epsilon)} \quad \forall u \in H^s(\Omega^\epsilon),
	\end{align}
	with $C$ independent of $\epsilon$.
\end{lemma}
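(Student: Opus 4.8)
Here is how I would prove Lemma \ref{lemB.5}. The plan is to reduce the estimate to the classical trace inequality on the fixed half-ball $B^{+}$ via the boundary coordinate charts $\{(\theta^{\epsilon})^{l}\}_{l=1}^{K}$ for $\partial\Omega^{\epsilon}$ constructed in Section \ref{sssec.222}, and then to verify that every constant appearing in the localization is independent of $\epsilon$. First I would fix, for each $\epsilon$, a partition of unity $\{\chi^{\epsilon,l}\}_{l=1}^{K}$ on $\partial\Omega^{\epsilon}$ subordinate to $\{(\theta^{\epsilon})^{l}(B^{0})\}$ with $\sum_{l}\chi^{\epsilon,l}\equiv 1$ on $\partial\Omega^{\epsilon}$, obtained by transporting a fixed partition of unity on $\partial\Omega$ through the chart modifications of Section \ref{sssec.222}. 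On boundary charts these modifications are either the identity (non-hemisphere charts), a fixed vertical translation $x\mapsto x-(1-\epsilon)e_{3}$ (hemisphere charts), or the affine vertical dilation $H^{\epsilon}$, whose linear part converges to a fixed invertible matrix as $\epsilon\to 0$; hence the functions $\chi^{\epsilon,l}$ and the maps $(\theta^{\epsilon})^{l}$ have $C^{k}$-norms — together with those of $[(\theta^{\epsilon})^{l}]^{-1}$ — bounded uniformly in $\epsilon$. Note that the failure of $\sum_{l}\xi^{\epsilon,l}$ to equal $1$ noted after Section \ref{sssec.222} occurs in the \emph{interior} of the stretched neck and does not affect this boundary partition.

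Next I would localize via $\|u\|_{H^{s-1/2}(\partial\Omega^{\epsilon})}\le\sum_{l}\|\chi^{\epsilon,l}u\|_{H^{s-1/2}(\partial\Omega^{\epsilon})}$. For fixed $l$, set $\bar{u}^{l}:=(\chi^{\epsilon,l}u)\circ(\theta^{\epsilon})^{l}$, which lies in $H^{s}(B^{+})$ and is supported in a compact subset of $B^{+}\cup B^{0}$; since $(\theta^{\epsilon})^{l}(B^{0})=(\theta^{\epsilon})^{l}(B)\cap\partial\Omega^{\epsilon}$, the boundary trace of $\chi^{\epsilon,l}u$ pulls back to the trace of $\bar{u}^{l}$ on $B^{0}$. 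Applying the classical half-space trace theorem (valid for all real $s>1/2$, with an absolute constant; see \cite{T}) gives $\|\bar{u}^{l}\|_{H^{s-1/2}(B^{0})}\le C\|\bar{u}^{l}\|_{H^{s}(B^{+})}$. Then, using the Norm Equivalence Lemma \cite{CS2} together with the fact that $\det\nabla(\theta^{\epsilon})^{l}=C^{l}_{\epsilon}$ is a constant lying in a fixed compact subinterval of $(0,\infty)$ for $0<\epsilon\ll 1$ (for neck charts $C^{l}_{\epsilon}=\tfrac{h+3+3\epsilon}{h}C_{l}$, otherwise $C^{l}_{\epsilon}=C_{l}$) and the $\epsilon$-uniform $C^{k}$-bounds above, I would transfer norms in both directions with an $\epsilon$-independent constant:
\[
\|\bar{u}^{l}\|_{H^{s}(B^{+})}\le C\|\chi^{\epsilon,l}u\|_{H^{s}(\Omega^{\epsilon})}\le C\|u\|_{H^{s}(\Omega^{\epsilon})},\qquad \|\chi^{\epsilon,l}u\|_{H^{s-1/2}(\partial\Omega^{\epsilon})}\le C\|\bar{u}^{l}\|_{H^{s-1/2}(B^{0})}.
\]
Summing over the fixed number $K$ of boundary charts yields \eqref{B.7} with $C$ depending only on $\Omega$ and $s$; for non-integer $s$ one invokes the half-space trace theorem directly or interpolates between consecutive integer levels, the chart pullbacks being compatible with interpolation since they are smooth diffeomorphisms with uniform bounds.

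The main obstacle, and essentially the only point requiring care, is establishing the $\epsilon$-uniform geometric control of the chart family $\{(\theta^{\epsilon})^{l}\}$ — the uniform $C^{k}$-bounds on $(\theta^{\epsilon})^{l}$ and $[(\theta^{\epsilon})^{l}]^{-1}$ and the uniform nondegeneracy of the Jacobians — despite the neck of $\Omega^{\epsilon}$ pinching as $\epsilon\to 0$. This is possible precisely because the charts were pre-arranged in Section \ref{sssec.222} so that all the $\epsilon$-dependence on the neck is carried by the single affine map $H^{\epsilon}$, a vertical dilation by the factor $(h+3+3\epsilon)/h\to(h+3)/h$, while on the rest of the boundary the charts are $\epsilon$-independent up to a fixed translation. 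Once this uniformity is recorded (it is implicit in the construction following \cite{CS5}), the remainder is the classical localized trace argument.
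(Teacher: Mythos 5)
The paper states Lemma~\ref{lemB.5} without proof, so there is no proof of record to compare against; your argument is the standard localize-and-flatten route and it is essentially correct. You correctly identify the one nonstandard ingredient: verifying that the chart constants are $\epsilon$-uniform. Since on boundary charts the $\epsilon$-modification is the identity, a fixed translation, or the affine vertical dilation $H^\epsilon$ whose linear part lies in a compact subset of $\mathrm{GL}(3)$ for $0<\epsilon\ll1$, the maps $(\theta^\epsilon)^l$, their inverses, and their Jacobians all enjoy $\epsilon$-uniform bounds, and the localized half-space trace inequality plus norm equivalence then give~\eqref{B.7} with an $\epsilon$-independent constant. Summing over the fixed number $K$ of boundary charts closes the argument.

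One small imprecision worth flagging: you assert that the failure of $\sum_l \xi^{\epsilon,l}=1$ noted in Section~\ref{sssec.222} ``occurs in the interior of the stretched neck and does not affect this boundary partition.'' That is not quite right. The $\epsilon$-dependence of the vertical derivative that makes the bulk sum drop below $1$ affects the restriction to the lateral cylinder surface $\{x_1^2+x_2^2=1\}$ as well, and that surface \emph{is} part of $\partial\Omega^\epsilon$; the vertical coordinate is tangential there. What saves the proof is not that the transported boundary cutoffs sum to $1$, but that the sum is bounded below uniformly in $\epsilon$ (it converges to the fixed boundary partition on $\partial\Omega$ as $\epsilon\to0$). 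One can then either renormalize, $\chi^{\epsilon,l}\mapsto\chi^{\epsilon,l}/\sum_m\chi^{\epsilon,m}$, which preserves the uniform $C^k$ bounds, or note directly that the localization inequality
\begin{align*}
\|u\|_{H^{s-\frac{1}{2}}(\partial\Omega^\epsilon)}^2 \leq C\sum_{l=1}^{K}\|\chi^{\epsilon,l}u\|_{H^{s-\frac{1}{2}}(\partial\Omega^\epsilon)}^2
\end{align*}
on which the Norm Equivalence Lemma of~\cite{CS2} relies only requires $\sum_l(\chi^{\epsilon,l})^2$ to be bounded away from zero with uniform derivative bounds, not that it equal one identically. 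With that adjustment recorded, your proof is complete.
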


\begin{lemma}[Korn's Inequality]\label{lem.korn}
	For bounded Lipschitz domains $\Omega \subset \mathbb{R}^n$:
	\begin{align}\label{B.8}
		\|\nabla v\|_{L^2(\Omega)}^2 \leq C(\Omega) \left( \|v\|_{L^2(\Omega)}^2 + \|\mathcal{S}(v)\|_{L^2(\Omega)}^2 \right) \quad \forall v \in H^1(\Omega; \mathbb{R}^n).
	\end{align}
\end{lemma}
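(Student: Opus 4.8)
The plan is to derive Korn's inequality from two classical ingredients: an algebraic identity expressing the second-order partial derivatives of $v$ in terms of first-order distributional derivatives of the (doubled) symmetric gradient $\mathcal{S}(v)$, together with the Ne\v{c}as--Lions lemma, which says that a distribution on a bounded Lipschitz domain whose value and whose full gradient both lie in $H^{-1}$ must in fact lie in $L^2$, with the corresponding quantitative bound. First I would record, for $v\in C^\infty(\overline\Omega;\mathbb{R}^n)$ and all indices $k,\ell,m$, the pointwise identity
\[
\partial_k\partial_\ell v_m=\tfrac12\bigl(\partial_k\mathcal{S}(v)_{\ell m}+\partial_\ell\mathcal{S}(v)_{km}-\partial_m\mathcal{S}(v)_{k\ell}\bigr),
\]
which follows by expanding $\mathcal{S}(v)_{ij}=\partial_i v_j+\partial_j v_i$ and cancelling the mixed terms; by density it persists for $v\in H^1(\Omega)$ in the sense of distributions.

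Next I would interpret this identity in $H^{-1}(\Omega)$. Fix $\ell,m$ and set $f:=\partial_\ell v_m$. Since $v_m\in L^2(\Omega)$ we have $f\in H^{-1}(\Omega)$ with $\|f\|_{H^{-1}}\le\|v_m\|_{L^2}$; and by the identity each $\partial_k f$ is a first-order distributional derivative of components of $\mathcal{S}(v)\in L^2(\Omega)$, hence lies in $H^{-1}(\Omega)$ with $\|\partial_k f\|_{H^{-1}}\le C\|\mathcal{S}(v)\|_{L^2}$. Invoking the Ne\v{c}as--Lions lemma on the bounded Lipschitz domain $\Omega$, there is $C=C(\Omega)$ with
\[
\|f\|_{L^2(\Omega)}\le C\Bigl(\|f\|_{H^{-1}(\Omega)}+\sum_{k}\|\partial_k f\|_{H^{-1}(\Omega)}\Bigr)
\]
for every such $f$. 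Applying this with $f=\partial_\ell v_m$, squaring, and summing over $\ell,m$ yields exactly $\|\nabla v\|_{L^2(\Omega)}^2\le C(\Omega)\bigl(\|v\|_{L^2(\Omega)}^2+\|\mathcal{S}(v)\|_{L^2(\Omega)}^2\bigr)$, with the constant depending only on $\Omega$ through the Ne\v{c}as constant.

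The only non-elementary step, and hence the main obstacle, is the Ne\v{c}as--Lions lemma on general Lipschitz domains; it is entirely classical (Ne\v{c}as; Duvaut--Lions; Amrouche--Girault), but its proof is not short. An alternative that sidesteps quoting it directly is a contradiction argument: if the inequality failed there would exist $v_j\in H^1(\Omega)$ with $\|\nabla v_j\|_{L^2}=1$ and $\|v_j\|_{L^2}^2+\|\mathcal{S}(v_j)\|_{L^2}^2\to 0$; by Rellich compactness a subsequence converges in $L^2$ to some $v$ with $v=0$ and $\mathcal{S}(v)=0$, and the identity above (equivalently, elliptic regularity for the operator $w\mapsto\mathcal{S}(w)$) then forces $\nabla v_j$ to converge strongly in $L^2$, contradicting $\|\nabla v_j\|_{L^2}=1$. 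Either route reduces to controlling $\nabla v$ by $\mathcal{S}(v)$ modulo a compactly lower-order $L^2$ term. Finally, since in this paper the lemma is invoked only on the smooth reference configurations $\Omega^\epsilon$, and then transported to the near-identity deformed configuration via $\|\nabla\eta-I\|_{L^\infty}\le\theta^2$ from Lemma \ref{lem4.1}, it suffices in practice to use the smooth-domain case, for which the Ne\v{c}as lemma is standard.
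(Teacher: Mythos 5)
Your main argument is correct and is the classical proof of Korn's second inequality (Gobert; Duvaut--Lions; Ne\v{c}as). The algebraic identity
\[
2\,\partial_k\partial_\ell v_m \;=\; \partial_k\mathcal{S}(v)_{\ell m}+\partial_\ell\mathcal{S}(v)_{km}-\partial_m\mathcal{S}(v)_{k\ell}
\]
checks out under the paper's convention $\mathcal{S}(v)_{ij}=\partial_i v_j+\partial_j v_i$ (the two mixed-derivative terms on the right cancel in pairs), and the Ne\v{c}as--Lions lemma on a bounded Lipschitz domain upgrades $f\in H^{-1}(\Omega)$ with $\nabla f\in H^{-1}(\Omega;\mathbb{R}^n)$ to $f\in L^2(\Omega)$ with $\|f\|_{L^2}\leq C(\|f\|_{H^{-1}}+\|\nabla f\|_{H^{-1}})$; applying this to $f=\partial_\ell v_m$ and summing over $\ell,m$ gives exactly \eqref{B.8}. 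The paper itself offers no proof and simply cites \cite{Lew23}, so your derivation is a genuine addition: a self-contained, quantitative argument whose only external input is the Ne\v{c}as lemma, which is precisely the kind of reduction such references carry out.

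One caution on the secondary compactness sketch: as written it does not close. From $\|v_j\|_{L^2}\to 0$ and $\|\mathcal{S}(v_j)\|_{L^2}\to 0$ with $\|v_j\|_{H^1}$ bounded you can only extract $v_j\rightharpoonup 0$ weakly in $H^1$, hence $\nabla v_j\rightharpoonup 0$ weakly in $L^2$; weak convergence to zero is perfectly compatible with $\|\nabla v_j\|_{L^2}\equiv 1$, so there is no contradiction at this stage. Upgrading to the strong convergence $\nabla v_j\to 0$ requires some compactness or regularity input (Tartar's equivalence lemma, or interior elliptic regularity for the Lam\'e-type operator) that is essentially of the same depth as the Ne\v{c}as lemma itself, so the argument as sketched is circular unless that ingredient is made explicit. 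Since you offer it only as an alternative and rest the proof on the Ne\v{c}as route, this does not affect the correctness of your proposal.
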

\begin{proof}
	See \cite[Theorem 3.5]{Lew23}.
\end{proof}

{\bf Acknowledgments.} Hao and Zhang were partially supported by the National Natural Science Foundation of China (Grant No. 12171460).  Hao was also supported by the CAS Project for Young Scientists in Basic Research (Grant No. YSBR-031) and the National Key R\&D Program of China (Grant No. 2021YFA1000800).
Zhang was also supported by Hefei University Talent Program (Grant No. 24RC01) and Discipline (Professional) Leader Cultivation Project of Anhui Province (Grant No. DTR2024039).

\end{document}